% LLN for LNNL and connectivity thresholds in polytopal sample paper.
% By Mathew Penrose and Xiaochuan Yang. Edited MP  6 Jan 2023

%%%%%%%%%%%%%%%%%%%%%%% file template.tex %%%%%%%%%%%%%%%%%%%%%%%%%
%
% This is a general template file for the LaTeX package SVJour3
% for Springer journals.          Springer Heidelberg 2010/09/16
%
% Copy it to a new file with a new name and use it as the basis
% for your article. Delete % signs as needed.
%
% This template includes a few options for different layouts and
% content for various journals. Please consult a previous issue of
% your journal as needed.
%
%%%%%%%%%%%%%%%%%%%%%%%%%%%%%%%%%%%%%%%%%%%%%%%%%%%%%%%%%%%%%%%%%%%
%
% First comes an example EPS file -- just ignore it and
% proceed on the \documentclass line
% your LaTeX will extract the file if required
%\begin{filecontents*}{example.eps}
%%!PS-Adobe-3.0 EPSF-3.0
%%%BoundingBox: 19 19 221 221
%%%CreationDate: Mon Sep 29 1997
%%%Creator: programmed by hand (JK)
%%%EndComments
%gsave
%newpath
%  20 20 moveto
%  20 220 lineto
%  220 220 lineto
%  220 20 lineto
%closepath
%2 setlinewidth
%gsave
%  .4 setgray fill
%grestore
%stroke
%grestore
%\end{filecontents*}
%%
%\RequirePackage{fix-cm}
%
%\documentclass{article}
%\documentclass{svjour3}                     % onecolumn (standard format)
%\documentclass[smallcondensed]{svjour3}     % onecolumn (ditto)
%\documentclass[smallextended,envcountsect]{svjour3}       % onecolumn (second format)
%\documentclass[twocolumn]{svjour3}          % twocolumn
%
%\smartqed  % flush right qed marks, e.g. at end of proof
%
\documentclass[12pt]{article}
\usepackage{graphicx}
 \usepackage{mathptmx}      % use Times fonts if available on your TeX system
%
% insert here the call for the packages your document requires
\usepackage{amsmath}
\usepackage{amssymb}
\usepackage{amsfonts}

\usepackage[usenames,dvipsnames]{color}
\usepackage{amsthm}
%\usepackage{latexsym}
% etc.
%
% please place your own definitions here and don't use \def but
% \newcommand{}{}

\topmargin 0cm \evensidemargin 0.5cm \oddsidemargin 0.5cm
\textwidth15cm \textheight20.5cm

\newtheorem{prop}{Proposition}[section]{\bfseries}{\itshape}
\newtheorem{theo}[prop]{Theorem}{\bfseries}{\itshape}
\newtheorem{coro}[prop]{Corollary}{\bfseries}{\itshape}
\newtheorem{lemm}[prop]{Lemma}{\bfseries}{\itshape}

\newtheorem{remk}[prop]{Remark}{\bfseries}{\itshape}

\newcommand{\eqco}{\setcounter{equation}{0}}
\newcommand{\allco}{\eqco}
\setcounter{footnote}{1}

%\renewcommand{\thetheo}{\thesection..\arabic{theo}}

%\newenvironment{Proof}{\noindent\textbf{Proof. }\nopagebreak}%
%\nopagebreak {\hspace*{\fill}\rule{7pt}{7pt}}\\ }

%Activate this if I want to display eq number only

%Activate this if I want to display eq number and text number, too
%\newcommand{\lbl}[1]{\hspace{1cm} \underline{({#1})} \label{#1}}

%Call this eq numbers in text
%\newcommand{\eq}[1]{$(\ref{#1})$}

%\newcommand{\ER}{{Erd\H{o} s-R\'enyi~}}

\newcommand{\la}{\lambda}
\newcommand{\Po}{{\cal P}}

\newcommand{\Q}{{\end{document}}}

\newcommand{\Bin}{{\rm Bin}}

\newcommand{\N}{\mathbb{N}}
\newcommand{\Z}{\mathbb{Z}}

\newcommand{\R}{\mathbb{R}}
\newcommand{\E}{\mathbb{E}}
\newcommand{\X}{\mathcal{X}}
\renewcommand{\Pr}{\mathbb{P}}
\renewcommand{\emptyset}{\varnothing}

\newcommand{\cL}{\mathcal L}

\newcommand{\essinf}{{\rm ess~inf}}
\renewcommand{\E}{\mathbb E \,}

\newcommand{\nalpha}{u}

  %vol of unit ball
\newcommand{\hH}{\hat{H}}
\newcommand{\cK}{{\cal K}}

\newcommand{\ph}{\varphi}
\newcommand{\LL}{{\cal L}}

\newcommand{\Vol}{{\rm Vol}}
\newcommand{\diam}{{\rm diam}}

\newcommand{\cF}{{\cal F}}

\newcommand{\eps}{\varepsilon}
\newcommand{\edm}{\end{displaymath}}
\newcommand{\be}{\begin{equation}}
\newcommand{\ee}{\end{equation}}
\newcommand{\bea}{\begin{eqnarray}}
\newcommand{\eea}{\end{eqnarray}}
\newcommand{\bean}{\begin{eqnarray*}}
\newcommand{\eean}{\end{eqnarray*}}

\renewcommand{\epsilon}{\varepsilon}
\newcommand{\bbS}{\mathbb{S}}
\newcommand{\dist}{\,{\rm dist}}
%\newcommand{\lglg}{\log \log}

% How to make text blue: \textcolor{\blue}{txt}

%\def\Comment#1{
%\marginpar{$\bullet$\quad{\tiny #1}}}

%\def\Comment#1{\lineskip-4pt
%\marginpar{ $\bullet$\quad{\em\small #1}}}

%\newcommand{\vol}{\hbox{\it vol}}

%\newcommand{\qed}{\hfill\hbox{${\vcenter{\vbox{
%    \hrule height 0.4pt\hbox{\vrule width 0.4pt height 6pt
%    \kern5pt\vrule width 0.4pt}\hrule height 0.4pt}}}$}}

%%%%%%%%%%%

%
% Insert the name of "your journal" with
% \journalname{myjournal}
%
\begin{document}

\title{Largest nearest-neighbour link and connectivity threshold  in a polytopal random sample
\thanks{ Supported by EPSRC grant EP/T028653/1 }
%This research was funded, in part, by
%EPSRC Grant EP/T028653/1. A CC BY or equivalent licence
%is applied to the AAM arising from this submission,
%in accordance with the grant’s open access conditions.}
%%about the article that should go on the front page should be
%placed here. General acknowledgments should be placed at the end of the article.}
}
%\subtitle{Do you have a subtitle?\\ If so, write it here}

%\titlerunning{Short form of title}        % if too long for running head

\author{ Mathew D. Penrose \thanks{
	%First Author         \and
        %Second Author %etc.
%
%\authorrunning{Short form of author list} % if too long for running head
%
%\institute{Mathew D. Penrose \at
  Department of
Mathematical Sciences, University of Bath, Bath BA2 7AY, United
Kingdom. 
              %first address \\
              %Tel.: +123-45-678910\\
              %Fax: +123-45-678910\\
             \texttt{m.d.penrose@bath.ac.uk}             %  \\
}
	     \and Xiaochuan  Yang \thanks{Department of Mathematics, Brunel University London, Uxbridge, UB83PH, United Kingdom. 
	     \texttt{xiaochuan.yang@brunel.ac.uk  
	    ORCID:0000-0003-2435-4615}}
}
%            % \emph{Present address:} of F. Author  %  if needed
           %\and
           %S. Author \at
              %second address
%}

%\date{Received: date / Accepted: date}
%\date{Revised  September-December 2021}
% The correct dates will be entered by the editor

\maketitle

\begin{abstract}
	Let $X_1,X_2, \ldots $ be independent identically distributed  random points in a convex
	polytopal  domain $A \subset \mathbb{R}^d$.  Define the 
	{\em largest nearest neighbour link} $L_n$ to be the smallest
	$r$ such that every point of 
	$\X_n:=\{X_1,\ldots,X_n\}$ has another such point within distance $r$.  We obtain a strong law of large numbers for $L_n$ in the large-$n$ limit.   A related threshold, the \emph{connectivity threshold} $M_n$,  is the smallest $r$ such that the random geometric graph $G(\X_n, r)$ is connected. We show that
	as $n \to \infty$, 
	almost surely 
	$nL_n^d/\log n$ tends to a limit that depends on the geometry of
	$A$, and
	$nM_n^d/\log n$ tends to the same limit.
%Insert your abstract here. Include keywords, PACS and mathematical
%subject classification numbers as needed.
%\keywords{coverage threshold \and weak limit \and strong
% law of large numbers \and Boolean model \and Poisson point process}
% \PACS{PACS code1 \and PACS code2 \and more}
% \subclass{
%	 %MSC code1 \and MSC code2 \and more}
%	 %MSC: 
%	 %AMS classifications:
%	 60D05 
%% (Geometric prob and stoch geom)
%	 \and
% 60G55
%	%(Point processes) 
%	\and
% 60F05 %(Central limit and other weak theorems) 
%	\and
% 60F15} %(strong limit theorems)
% %53A05 (surfaces in Euclidean and related spaces)
% %60G57 (random measures - maybe not)
\end{abstract}

%\section{Introduction}
%\label{intro}
%Your text comes here. Separate text sections with
%\section{Section title}
%\label{sec:1}
%Text with citations \cite{RefB} and \cite{RefJ}.
%\subsection{Subsection title}
%\label{sec:2}
%as required. Don't forget to give each section
%and subsection a unique label (see Sect.~\ref{sec:1}).
%\paragraph{Paragraph headings} Use paragraph headings as needed.
%\begin{equation}
%a^2+b^2=c^2
%\end{equation}

%% For one-column wide figures use
%\begin{figure}
%% Use the relevant command to insert your figure file.
%% For example, with the graphicx package use
%  \includegraphics{example.eps}
%% figure caption is below the figure
%\caption{Please write your figure caption here}
%\label{fig:1}       % Give a unique label
%\end{figure}
%
%% For two-column wide figures use
%\begin{figure*}
%% Use the relevant command to insert your figure file.
%% For example, with the graphicx package use
%  \includegraphics[width=0.75\textwidth]{example.eps}
%% figure caption is below the figure
%\caption{Please write your figure caption here}
%\label{fig:2}       % Give a unique label
%\end{figure*}
%
%% For tables use
%\begin{table}
%% table caption is above the table
%\caption{Please write your table caption here}
%\label{tab:1}       % Give a unique label
%% For LaTeX tables use
%\begin{tabular}{lll}
%\hline\noalign{\smallskip}
%first & second & third  \\
%\noalign{\smallskip}\hline\noalign{\smallskip}
%number & number & number \\
%number & number & number \\
%\noalign{\smallskip}\hline
%\end{tabular}
%\end{table}

\section{Introduction}
\label{SecIntro}

This paper is primarily concerned with the 
{\em connectivity threshold}
and 
{\em largest nearest-neighbour link} 
for a random sample $\X_n$ 
of $n$ points 
specified compact region $A$ in a $d$-dimensional Euclidean space. 
%We shall express our results in terms of the

The connectivity threshold, here denoted $M_n$, is defined to be the smallest $r$ such that  the random geometric graph $G(\X_n,r)$
is connected. For any finite $\X \subset \R^d $ the graph $G(\X,r)$ is defined
to have vertex set $\X$ with edges between those pairs of vertices 
$x,y$ such that 
%and for distinct $x,y \in \X$, $\{x,y\}$
%is an edge of $G(\X,r)$.
%, a graph with vertex set $\X_n$ and with edges $\{X_i,X_j\}$ whenever 
%$\dist(X_i,X_j)\le r$.
$\|x - y\|\le r$, where $\|\cdot\|$ is the Euclidean norm.
%is connected.
More generally, for $k\in\N$, the $k$-connectivity threshold $M_{n,k}$ is the smallest $r$ such that $G(\X_n,r)$ is $k$-connected (see the  definition in Section \ref{secdefs}).

 The largest nearest neighbour link, here denoted  $L_n$, is defined 
to be  the the smallest $r$ such that every
vertex in $G(\X_n,r)$ has degree at least 1.
%point of the sample is
 %within distance at most $r$ from the rest of the sample.
More generally, for $k \in \N$ with $k < n$,
 the {\em largest $k$-nearest neighbour link} $L_{n,k}$
is the smallest $r$ such that every 
vertex in $G(\X_n,r)$ has degree at least $k$.
These thresholds are random variables, because the locations of
the centres are random.
We investigate their probabilistic behaviour 
as $n$ becomes large.

We shall derive strong laws of large numbers
showing that
that $nL_{n,k}^d/\log n$
 converges  almost surely (as $n \to \infty$)
 to a finite positive limit,
and establishing the value of the limit.
Moreover we show 
that $nM_{n,k}^d/\log n$ converges to the same limit.
 These
strong laws carry over
to  more general cases where $k$ may vary with $n$, and  the distribution
of points may be non-uniform. We give results of this type
for $A$ a convex polytope.

Previous results  of this type (both for $L_{n,k}$ and for
$M_{n,k}$) were obtained for
$A$ having a smooth boundary, and for $A$ a $d$-dimensional
hypercube; see \cite{RGG}. It is perhaps not obvious from the
earlier results, however, how the limiting constant depends
on the geometry of $\partial A$, the
topological boundary of $A$,
for general polytopal $A$, 
which is quite subtle.

It  turns out,
	for example, that when $d=3$ and the points are
uniformly distributed over a polyhedron, the limiting behaviour of $L_n$
is determined by the angle of the sharpest edge if this angle is
less than $\pi/2$.
We believe
  (but do not formally prove here)  
 that if this angle exceeds $\pi/2$
 then the point of  $\X_n$ furthest
from the rest of  $\X_n$ is asymptotically uniformly distributed
over $\partial A$, but if this angle is less than $\pi/2$ the location
of this point in 
is asymptotically  uniformly distributed
over the union of those edges  which are sharpest.

Our motivation for this study is twofold. First, understanding the connectivity threshold in dimension two is vital in telecommunications, for example,
in  5G wireless network design, with the nodes of $\X_n$ representing
mobile transceivers (see for example \cite{BB}).
Second,  detecting connectivity is a fundamental step for detecting all other higher dimensional topological features in modern topological data analysis (TDA), where the dimension of the ambient space may be very high. See 
\cite{Bobrowski,BK18} for discussion of issues
related to the one considered here, in relation to TDA.
General motivation for considering random geometric graphs is 
discussed in \cite{RGG}.

While our main results are presented (in Section \ref{secdefs})
in the concrete setting of a polytopal sample in $\R^d$, our proofs
proceed via general lower and upper bounds (Propositions 
\ref{gammalem} and \ref{p:up}) that are  presented in the more general
setting of a random sample of points in a metric space satisfying certain
regularity conditions. This could be useful in possible future work dealing
with similar problems for random samples in, for example, a Riemannian manifold with boundary, a setting of importance in TDA.

%{\bf add intro for $M$} 

 %\section{Definitions and notation}
 \section{Statement of results}
 \label{secdefs}
\label{secLLN}

Throughout this paper, we work within the following mathematical framework.  Let $d \in \N$. Suppose we have the following ingredients:

\begin{itemize}
	\item A finite compact convex polytope
		$A \subset \R^d$ (i.e., one with finitely many faces).
	%	(Riemann measurability of a bounded set
%in $\R^d$ amounts to its boundary having zero Lebesgue measure). 
\item
	A Borel probability measure $\mu$ on $A$ with
		probability density function $f$.
	%We shall assume $\|f\|_\infty:= \essup_A f <\infty$ {\bf (can we avoid this?)}
	\item On a common  probability space
		$(\bbS,\cF,\Pr)$, a sequence  
$X_1,X_2,\ldots$ of  independent identically distributed
random $d$-vectors with common probability distribution $\mu$,
and also a unit rate Poisson counting
process   $(Z_t,t\geq 0)$, independent of $(X_1,X_2,\ldots)$
(so $Z_t$ is Poisson distributed with mean $t$ for each $t >0$).
\end{itemize}

For $n \in \N$, $t >0$,  let $\X_n:= \{X_1,\ldots,X_n\}$, and let
  $\Po_t:= \{X_1,\ldots,X_{Z_t}\}$.
These are the point processes that concern us here. Observe that $\Po_t$
is a Poisson point process in $\R^d $ with intensity measure $t \mu$
%where we set $\mu$ to be the distribution of $X_1$
 (see e.g. \cite{LP}).

For $x \in \R^d$ and $r>0$ set $B(x,r):= \{y \in \R^d:\|y-x\| \leq r\}$.
% where $\|\cdot\|$ denotes the Euclidean norm.
For $r>0$, let $A^{(r)}:=\{ x \in A: B(x,r) \subset A^o\}$, the
`$r$-interior' of $A$.

For any point set $\X \subset \R^d$ and any $D \subset \R^d$ we write
$\X(D)$ for the number of points of $\X$ in $D$,
 %and we use below the convention $\inf\{\} := +\infty$.
 and we use below the convention $\inf(\emptyset) := +\infty$.

Given $n, k \in \N$, and 
 $t \in (0,\infty)$,
 define the largest $k$-nearest neighbour link $L_{n,k}$
 by
\bea
L_{n,k} : =
 \inf ( \left\{ r >0: \X_n   (B(x,r)) \geq k+1 
~~~~ \forall x \in \X_n \right\}).
\label{Rnkdef}
\eea
Set $L_n : = L_{n,1}$.  Then
$L_n$ is the largest nearest-neighbour link.

We are chiefly interested in the asymptotic behaviour of $L_n$ 
 for large $n$.  More generally, we consider $L_{n,k}$ where $k$ may
vary with $n$.

%Observe that $\tilde{R}_{n,k} $ is the smallest $r$ such that $ B \cap A^{(r)} $
%is covered $k$ times by the balls of radius $r$ centred on the points
%of $\X_n$.  It can be seen that when $B=A$,
%the maximal $k$-spacing of the sample $\X_n$ (defined earlier) is 
%equal to $\theta_d \tL_n^d$, where 

Let
$\theta_d := \pi^{d/2}/\Gamma(1 + d/2)$,
the volume of the unit ball in $\R^d$.
Given $x,y \in \R^d$, we denote by $[x,y]$ the line segment from
$x$ to $y$, that  is, the convex hull of the set $\{x,y\}$.

%We now give  some further notation used throughout. For
%$D \subset \R^d$, let $\overline{D}$ and $D^o$
%denote the closure of $D$.  and interior of $D$, respectively. 
%Let $|D|$ denote the Lebesgue  measure (volume) of $D$, and
%$|\partial D|$ the perimeter of $D$, i.e. the
%$(d-1)$-dimensional Hausdorff measure of
%$\partial D$, when these are defined.  Write $\lglg t$ for $\log (\log t)$,
%	$t >1$.  Let $o$ denote the origin in $\R^d$.
%Set $\bH := \R^{d-1}\times [0,\infty)$ and 
% $\partial \bH := \R^{d-1}\times \{0\}$.
 
%Given two  sets $\X,\Y \subset \R^d$, we
% set  $ \X \triangle \Y := (\X \setminus \Y) 
%\cup (\Y \setminus \X)$, the symmetric difference between $\X$ and $\Y$.
%Also, we write $\X \oplus \Y$ for the set $\{x+y: x \in \X, y \in \Y\}$.
% Given also $x \in \R^d$ we write $x+\Y$ for $\{x\} \oplus \Y$.
%
%We write $a \wedge b$ (respectively $a \vee b$) for the minimum
%(resp. maximum) of any two numbers $a,b \in \R$.

Given  $m \in \N$ and functions
$f: \N \cap [m,\infty) \to \R$ and
$g: \N \cap [m,\infty) \to (0,\infty)$,
we write $f(n) = O(g(n))$ as $n \to \infty$,  
 if $\limsup_{n \to \infty }|f(n)|/g(n) < \infty$.
 %We write $f(n)= o(g(n))$ as $n \to \infty$
 %if $\lim_{n \to \infty} f(n)/g(n) =0$.
We write $f(n) = \Omega(g(n)) $ as $n \to \infty$
if $\liminf_{n \to \infty} (f(n)/g(n) ) >0$.
 %We write $f(n)= \Theta(g(n))$ as $n \to \infty$
 %if both $f(n)= O(g(n))$ and $f(n)= \Omega(f(n))$.
Given $s >0$ and  functions
$f: (0,s) \to \R$
and $g:(0,s) \to (0,\infty)$,
we write $f(r) = O(g(r))$ as $r \downarrow 0$
%or $g(r) = \Omega(f(r))$ as $ r \downarrow 0$,
if $\limsup_{r \downarrow 0} |f(r) |/g(r) < \infty$.
We write $f(r) = \Omega(g(r))$ as $r \downarrow 0$,
if $\liminf_{r \downarrow 0} (f(r) /g(r)) >0$.
% We write $f(r)= o(g(r))$ as $r \downarrow 0$
% if $\lim_{r \downarrow 0 } f(r)/g(r) =0$,
% and $f(r) \sim g(r)$ as $r \downarrow 0$
% if this limit is 1.

%\section{Strong laws of large numbers}
%\label{secLLN}
%\allco

%The results in this section  provide
%  strong laws of large numbers (SLLNs) for $L_{n}$. For these
%  results we relax the condition that $f$ be uniform on $A$.
%We give strong laws for $L_n$ when $A=B$ and $A $ is either
%smoothly bounded or a polytope.
%Also for general $A$ we
%give strong laws for $L_n$ when $\overline{B} \subset A^o$,
%and for $\tL_n$ 
%%when $ B=A$, 
%for general $B$.

%More generally, we consider $L_{n,k}$, allowing $k$ to vary with $n$.
	Throughout this section, assume we are given 
	a constant $\beta \in [0,\infty]$
	%{\bf (could consider $\beta = \infty $ too)}
	and a sequence $k: \N \to \N$ with
	%such that as $n \to \infty$ we have
	\bea
	%k(n)/\log n \to \beta;  ~~~~~ k(n)/n \to 0.
	\lim_{n \to \infty} \left( k(n)/\log n \right) = \beta;
	~~~~~ \lim_{n \to \infty} \left( k(n)/n \right) = 0.
	\label{kcond}
	\eea
We make use of the following notation throughout:
\bea
f_0 := \essinf_{x \in A} f(x); ~~~~~~~ f_1:= \inf_{x \in \partial A}
f(x);
\label{f0def}
\\
 H(t) := \begin{cases}  1-t + t \log t, ~~~ & {\rm if} ~ t >0 \\
	 1 ,  & {\rm if} ~ t =0.
 \end{cases}
	 \label{Hdef}
\eea
	Observe that $-H(\cdot)$ is unimodal with a maximum
value of $0$ at $t=1$. Given $a \in [0,\infty)$, we define
the
function $\hH_a: [0,\infty) \to [a,\infty)$ by
\bean
y = \hH_a(x)  \Longleftrightarrow y H(a/y) =x,~ y \geq a,
\eean
with $\hH_0(0):=0$.
Note that 
$\hH_a(x)$  is  increasing in  $x$,
and that 
$\hH_0(x)=x$ and $\hH_a(0)=a$.

	Throughout this paper, the phrase `almost surely' or 
	`a.s.' means `except on a set of $\Pr$-measure zero'.
For $n \in \N$, we use $[n] $ to denote $\{1,2,\ldots,n\}$.
 We write $f|_A$ for the restriction of $f$ to $A$.
	 %If $f_0=0,$ $ b>0$ we interpret $b/f_0$ as $+\infty$ in
 %the following limiting statements, and likewise for $f_1$.

Let $\Phi(A)$ denote the set of all faces of the polytope
$A$ (of all dimensions up to $d-1$).
Also, let $\Phi^*(A):= \Phi(A) \cup \{A\}$; it is sometimes
useful for us to think of $A$ itself as a face, of dimension $d$.
%For $\ph \in \Phi^*(A)$, we write $f|_\ph$ for the restriction of $f$ to $\ph$.

Given a  face $\varphi \in \Phi^*(A)$, denote the
dimension of this face by  $D(\varphi)$.
Then $0 \leq D(\varphi) \leq d$, and $\varphi$ is a 
$D(\varphi)$-dimensional polytope embedded in $\R^d$.
 Let $\varphi^o$ denote the relative interior of $\varphi$,
and set $\partial \varphi := \varphi \setminus \varphi^o$
(if $D(\ph)=0$ we take $\ph^o:= \ph$).
If $D(\ph) < d $ then set $f_\varphi := \inf_{x \in \varphi}f(x)$,
and if $\ph =A$ then set $f_\ph := f_0$.

Then there is a cone $\cK_\varphi$ in $\R^d$ such that every $x \in \varphi^o$
has a neighbourhood $U_x$ such that $A \cap U_x = (x+ \cK_\varphi) \cap U_x$.
Define the angular volume $\rho_\varphi$ of $\varphi$ 
to be the $d$-dimensional Lebesgue measure of
$\cK_\varphi \cap B(o,1)$. 

For example, if $\ph= A$ then $\rho_\ph = \theta_d$. If
$D(\varphi)=d-1$ then $\rho_\varphi = \theta_d/2$.
If $D(\varphi) = 0$ then $\varphi = \{v\}$ for some vertex $v \in  \partial A$,
and $\rho_\varphi$ equals 
the volume of $B(v,r) \cap A$, 
	divided by $r^d$,
for all sufficiently small $r$.
If $d=2$, $D(\varphi)=0$ and $\omega_\varphi$ denotes
the angle subtended by $A$ at the vertex $\varphi$, 
then $\rho_{\varphi} = \omega_\varphi/2$. If $d=3$ and
$D(\varphi)=1$, and  $\alpha_\varphi$ denotes
the angle subtended by $A$ at the edge $\varphi$ (which is the angle
between the two boundary  planes of $A$ meeting at $\varphi$), then
$\rho_{\varphi} = %(4\pi/3)\omega_\varphi/(2\pi) = 
2 \alpha_\varphi/3$.

\begin{theo}
	\label{thmpolytope}
	Suppose $A$ is a compact convex finite polytope in $\R^d$. 
	Assume that $f|_A$ is continuous at $x $ for all $x \in \partial A$,
	and that $f_0>0$.  Assume $k(\cdot)$ satisfies (\ref{kcond}).
	Then,
	almost surely,
	\begin{align}
\lim_{n \to \infty} nL_{n,k(n)}^d/ k( n) & =  
%	\max \left( \frac{1}{f_0 \theta_d}, 
	\max_{\varphi \in \Phi^*(A)} 
	\left(
	\frac{1}{f_\varphi \rho_\varphi }   
\right) 
%	\right) 
		&{\rm if} ~\beta = \infty;
	\label{0717a}
\\
\lim_{n \to \infty} nL_{n,k(n)}^d/ \log n & =  
	%\max \left( \frac{ \hH_\beta(1) }{f_0 \theta_d} ,
\max_{\varphi \in \Phi^*(A)} \left( \frac{ \hH_\beta( D(\varphi)/d)  }{ 
f_\varphi \rho_\varphi}  \right)
	%\right)
		 &{\rm if} ~\beta < \infty.
	%\nonumber \\
	\label{0717b}
\end{align}
\end{theo}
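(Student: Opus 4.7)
The plan is to establish matching almost-sure upper and lower bounds for $nL_{n,k(n)}^d/\log n$ (or $nL_{n,k(n)}^d/k(n)$ when $\beta=\infty$), working face by face: for each $\ph \in \Phi^*(A)$ I would obtain the contribution $\hH_\beta(D(\ph)/d)/(f_\ph \rho_\ph)$ separately, and then take the maximum. I would Poissonise first: since $L_{\Po_t,k}$ is non-increasing in $t$ and $Z_n/n \to 1$ almost surely, bounds for the Poisson version $L_{\Po_n,k(n)}$ transfer to the binomial $L_{n,k(n)}$ by a standard coupling and monotonicity argument. This step buys the spatial independence of $\Po_n$ that is crucial in both directions.

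For the lower bound, I would fix $\ph$, pick $x^\star \in \ph^o$ with $f(x^\star)=f_\ph$, and exploit the fact that in a neighbourhood $U$ of $x^\star$ the domain $A$ coincides with $x^\star + \cK_\ph$ up to lower-order terms, so that $\Vol(B(y,r)\cap A)=(1+o(1))\rho_\ph r^d$ for $y \in U \cap \ph^o$ and $r$ small. With $r_n^d = (1-\eps) \hH_\beta(D(\ph)/d)\log n/(n f_\ph \rho_\ph)$, I would pack the $D(\ph)$-dimensional slice $U \cap \ph^o$ with $\sim r_n^{-D(\ph)}$ disjoint cells whose associated balls of radius $2r_n$ are pairwise disjoint; on each cell, consider the event that $\Po_n$ places exactly one point near its centre and at most $k(n)-1$ further points in the surrounding ball of radius $r_n$. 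By the choice of $r_n$ and the defining identity $\hH_\beta(x) H(\beta/\hH_\beta(x))=x$, the per-cell Chernoff exponent $n f_\ph \rho_\ph r_n^d \cdot H(k(n)/(nf_\ph \rho_\ph r_n^d))$ equals $(1-\eps')(D(\ph)/d)\log n$ to leading order, so each per-cell probability is at least $n^{-D(\ph)/d+\eta}$ for some $\eta>0$. The events are independent across disjoint cells, so the probability that none occurs is at most $\exp(-n^\eta + o(1))$. A Borel--Cantelli argument along $n_j = 2^j$ and interpolation via the monotonicity of $L_{\Po_t,k}$ then deliver the desired liminf bound almost surely.

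For the upper bound, I would take $r_n^d=(1+\eps)c^\star \log n/n$ with $c^\star=\max_\ph \hH_\beta(D(\ph)/d)/(f_\ph\rho_\ph)$ and estimate, via the Mecke formula for $\Po_n$, the expected number of points $x \in \Po_n$ whose locally closest face is $\ph$ and for which $\Po_n(B(x,r_n)) \leq k(n)$. The tube of points whose nearest face is $\ph$ has $d$-volume $O(r_n^{d-D(\ph)})$; for $x$ in this tube, the Poisson mean $n\mu(B(x,r_n)) \geq (1-o(1))f_\ph \rho_\ph n r_n^d$ by the continuity of $f$ at $\partial A$, which by choice of $r_n$ is $\geq (1+\eps/2)\hH_\beta(D(\ph)/d)\log n$. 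The Chernoff bound $\Pr(\mathrm{Poisson}(\lambda)\leq k)\leq \exp(-\lambda H(k/\lambda))$ combined with the defining identity of $\hH_\beta$ makes each such point bad with probability at most $n^{-D(\ph)/d-\delta}$ for some $\delta>0$, and the expected bad-point count per face is at most $n \cdot r_n^{d-D(\ph)} \cdot n^{-D(\ph)/d-\delta}=(\log n)^{C} n^{-\delta}$. Summing over the finite family of faces, applying Borel--Cantelli along $n_j=2^j$, and using monotonicity of $L_{\Po_t,k(n)}$ in $t$ yields the matching upper bound.

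The main technical obstacle is matching the exponent $D(\ph)/d$ on both sides while handling all faces simultaneously with uniform error control: the lower bound requires constructing enough disjoint cells along the full $D(\ph)$-dimensional relative interior of $\ph$ near where $f$ achieves $f_\ph$, while the upper bound requires partitioning $A$ into tubes indexed by the locally closest face and reducing $f$ to the constant $f_\ph$ on each tube up to $1+o(1)$ factors via the continuity hypothesis on $\partial A$. The case $\beta=\infty$ then follows from the finite-$\beta$ conclusion via $\hH_\beta(x)/\beta \to 1$ as $\beta \to \infty$ and the renormalisation from $\log n$ to $k(n)$.
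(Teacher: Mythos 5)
Your proposal is broadly sound but takes a genuinely different route for the upper bound, while the lower bound is essentially the paper's argument in disguise. For the lower bound, the paper abstracts your packing-plus-Chernoff idea into a general metric-space lemma (Proposition \ref{gammalem}), then verifies its packing hypothesis face-by-face via Lemmas \ref{lemtopelb} and \ref{lemliminf}; your direct construction of $\sim r_n^{-D(\ph)}$ disjoint cells in $\ph^o$ is the same packing, just written out inline. For the upper bound, the paper does \emph{not} prove anything directly: it simply observes $L_{n,k}\le R_{n,k+1}$ and cites the coverage theorem from \cite{CovPaper} (or, alternatively, invokes the forthcoming $M_{n,k}$ upper bound and $L_{n,k}\le M_{n,k}$). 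Your Mecke-formula first-moment argument over tubes indexed by the locally closest face is a more self-contained route and would indeed work, at the cost of having to construct the tubes carefully: as the paper itself emphasises when building the sets $T(\ph,r)$ with geometrically increasing thicknesses $K_j$, a point close to $\ph$ may also be close to a lower-dimensional face of $\partial\ph$, and only a partition with increasing widths keeps the $\mu(B(x,r_n))\ge(1-o(1))f_\ph\rho_\ph r_n^d$ estimate valid throughout each tube. You flag this as ``the main technical obstacle'' but do not resolve it; this is exactly where the paper's Lemma \ref{l:geo1} (via the constant $K(A)$ from \eqref{e:Kpp}) does real work.

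Two further points need repair. First, the claim that $L_{\Po_t,k}$ is non-increasing in $t$ is false: enlarging the point set can create a new isolated point far from the others (increasing the $k$-nearest-neighbour link) or can fill in the neighbourhood of the previously worst point (decreasing it), so $L$ is not monotone in the number of points. What \emph{is} monotone is the inclusion $\X_m \subset \X_{m'}$ for $m\le m'$, and the paper's de-Poissonisation along the subsequence $z(n)=n^K$ exploits exactly this: it produces an event involving a small ball counted with respect to $\X_{z(n)}$ and a large ball counted with respect to $\X_{z(n+1)}$, which sandwiches all intermediate $\X_m$. Your interpolation along $n_j=2^j$ can be made rigorous in the same way, but not by asserting monotonicity of $L$. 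Second, deducing the $\beta=\infty$ case from $\beta<\infty$ via $\hH_\beta(x)/\beta\to1$ is not immediate: the normalisation changes from $\log n$ to $k(n)$ and there is no uniformity-in-$\beta$ statement to pass to the limit. The paper proves $\beta=\infty$ separately inside Proposition \ref{gammalem}, using Lemma \ref{lemChern}(c) (the super-exponential tail for $k\ge e^2 np$) to get summability when $k(n)/\log n\to\infty$; your proposal should do likewise rather than inherit the finite-$\beta$ conclusion.
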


In the next three results,
we spell out some special cases of Theorem \ref{thmpolytope}.
\begin{coro}
\label{thpolygon}
Suppose that $d=2$,  $A$ is a convex polygon 
 and  $f|_A$ is continuous at $x$ for all $x \in 
	\partial A$. Let $V$ denote the set of vertices of $A$, and
	for $v \in V$ let $\omega_v$ denote the angle
	subtended by $A$ at vertex $v$.
	Assume (\ref{kcond}) holds
	with $\beta < \infty$.
Then, almost surely,
\bea
\lim_{n \to \infty} \left( \frac{ n  L_{n,k(n)}^2}{\log n}\right) =
	\max \left( \frac{\hH_\beta(1)}{\pi f_0}, 
	\frac{2 \hH_\beta(1/2)}{\pi f_1},
	\max_{v \in V}  \left( 
\frac{2  \beta }{  \omega_{v} f(v) } \right) 
	\right) .
	\label{polystrong2}
\eea
In particular, for any constant $k \in \N$,
$
\lim_{n \to \infty} \left( \frac{ n \pi L_{n,k}^2}{\log n}\right) =
 \frac{1}{f_0}. 
 $
\end{coro}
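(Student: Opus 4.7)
The plan is to deduce Corollary~\ref{thpolygon} directly from Theorem~\ref{thmpolytope} applied with $d = 2$. First I enumerate the elements of $\Phi^*(A)$ for a convex polygon $A$: these are $A$ itself with $D(A) = 2$, the finitely many edges $e$ with $D(e) = 1$, and the finitely many vertices $v \in V$ with $D(v) = 0$.

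Next I record the angular volumes $\rho_\varphi$ for each face type, using the values listed immediately before Theorem~\ref{thmpolytope}: $\rho_A = \theta_2 = \pi$; $\rho_e = \theta_2/2 = \pi/2$ for each edge $e$; and $\rho_v = \omega_v/2$ for each vertex $v$. Substituting into the right-hand side of (\ref{0717b}) produces three families of terms. The face $A$ contributes $\hH_\beta(1)/(\pi f_0)$. Each edge $e$ contributes $2\hH_\beta(1/2)/(\pi \inf_{x \in e} f(x))$; since the closed edges cover $\partial A$ and $f|_A$ is continuous on $\partial A$, we have $\min_e \inf_{x \in e} f(x) = f_1$, so the maximum over edges equals $2\hH_\beta(1/2)/(\pi f_1)$. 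Each vertex $v \in V$ contributes $\hH_\beta(0)/(f(v)\, \omega_v/2) = 2\beta/(\omega_v f(v))$, using the identity $\hH_\beta(0) = \beta$ from the definition of $\hH_a$. Taking the overall maximum of these three families yields (\ref{polystrong2}).

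For the final assertion, a constant $k \in \N$ gives $\beta = \lim_{n \to \infty} k/\log n = 0$, and the identity $\hH_0(x) = x$ reduces the three contributions to $1/(\pi f_0)$, $1/(\pi f_1)$, and $0$, respectively. It remains to verify $f_1 \geq f_0$: if $f(x_0) < f_0$ held at some $x_0 \in \partial A$, then continuity of $f|_A$ at $x_0$ would force $f < f_0$ on an open neighborhood of $x_0$ intersected with $A$, which has positive Lebesgue measure, contradicting the essential-infimum definition of $f_0$. Hence the overall maximum equals $1/(\pi f_0)$, and $n \pi L_{n,k}^2/\log n \to 1/f_0$.

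Since Corollary~\ref{thpolygon} is a direct bookkeeping exercise on top of Theorem~\ref{thmpolytope}, I do not expect any substantive obstacle; the only mildly delicate point is the continuity-based comparison $f_1 \geq f_0$ needed in the constant-$k$ assertion, and this is settled by the short contradiction argument above.
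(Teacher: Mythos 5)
Your derivation is correct and is exactly the intended reading of Corollary~\ref{thpolygon} as a direct substitution into Theorem~\ref{thmpolytope}: enumerating $\Phi^*(A)$ for a polygon, inserting $\rho_A=\pi$, $\rho_e=\pi/2$, $\rho_v=\omega_v/2$, and using $\hH_\beta(0)=\beta$, $\hH_0(x)=x$, and $\min_e f_e=f_1$. The short continuity argument for $f_1\ge f_0$ in the constant-$k$ case is the one omitted-but-needed detail, and you supply it correctly.
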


\begin{coro}
\label{thpoly}
	Suppose $d=3$ (so $\theta_d = 4\pi/3$),  $A$ is a convex polyhedron 
 and $f|_A$ is continuous at $x$ for all $x \in \partial A$. 
	Let $V$ denote the set of vertices of $A$, and
	 $E$  the set of edges  of $A$. For $e\in E$,
let $\alpha_e$ denote the angle subtended by $A$ at edge $e$,
and  $f_e$  the infimum of
$f $ over $e$.  
	For $v \in V$ let $\rho_v$ denote the
	angular volume of vertex $v$.
	Suppose (\ref{kcond}) holds 
with $\beta <  \infty$. Then, almost surely,
\bean
\lim_{n \to \infty} \left( \frac{ n  L_{n,k(n)}^3}{\log n}\right) =
	\max \left( \frac{\hH_\beta(1)}{\theta_3 f_0}, 
	\frac{2 \hH_\beta(2/3)}{\theta_3 f_1},
	\frac{3 \hH_\beta(1/3) }{ 2 \min_{e \in E} (\alpha_e f_e ) } 
	, 
\max_{v \in V}  \left( 
\frac{\beta }{ \rho_{v} f(v) } \right) 
 \right) .
\eean
In particular, if $\beta =0$ the above limit comes to
 $\max \left( \frac{3}{ 4 \pi f_0}, \frac{1}{\pi f_1},
\max_{e \in E}  \left( 
\frac{1 }{2 \alpha_e f_e } \right)
 \right) $.
\end{coro}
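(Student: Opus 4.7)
The plan is to specialize Theorem \ref{thmpolytope} to $d=3$ by grouping the faces in $\Phi^*(A)$ according to their dimension. A convex polyhedron has faces of four possible dimensions: $A$ itself ($D=3$), the polygonal facets ($D=2$), the edges ($D=1$), and the vertices ($D=0$). For each group I identify the values of $\rho_\varphi$, $f_\varphi$ and $\hH_\beta(D(\varphi)/d)$ appearing in formula (\ref{0717b}), then take the overall maximum.

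For $\varphi = A$, we have $D(\varphi)/d = 1$, $\rho_\varphi = \theta_3$ and $f_\varphi = f_0$, contributing $\hH_\beta(1)/(\theta_3 f_0)$. For a facet $\varphi$ with $D(\varphi)=2$, the paragraph preceding Theorem \ref{thmpolytope} records $\rho_\varphi = \theta_3/2$, while $D(\varphi)/d = 2/3$. Since $\partial A$ is the union of the closed $2$-faces and $f|_A$ is continuous on $\partial A$, the infimum $f_1 = \inf_{\partial A} f$ equals $\min_\varphi f_\varphi$ where the min runs over $2$-faces; hence the largest facet contribution is $2\hH_\beta(2/3)/(\theta_3 f_1)$. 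For an edge $e$, the same paragraph records $\rho_e = 2\alpha_e/3$, and $D(\varphi)/d = 1/3$, so the edge contributions combine to $3\hH_\beta(1/3)/(2 \min_{e \in E}(\alpha_e f_e))$. Finally, for a vertex $v$, $D(\varphi) = 0$ and $\hH_\beta(0) = \beta$ by the defining convention, giving $\beta/(\rho_v f(v))$.

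Combining the four groups by the outer maximum in (\ref{0717b}) yields the first displayed formula of the corollary. The $\beta = 0$ consequence follows by substituting $\hH_0(x) = x$: the relevant values become $1$, $2/3$, $1/3$, $0$ at dimensions $3, 2, 1, 0$ respectively, the vertex terms vanish, and a short arithmetic simplification using $\theta_3 = 4\pi/3$ recovers the stated expression $\max\bigl(3/(4\pi f_0),\ 1/(\pi f_1),\ \max_{e \in E} 1/(2 \alpha_e f_e)\bigr)$.

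The argument is essentially bookkeeping, extracting a closed-form specialization from Theorem \ref{thmpolytope}. The only mild care needed is in the facet step, verifying that $f_1$ coincides with the minimum of $f_\varphi$ over $2$-faces, which holds because continuity of $f$ on $\partial A$ makes each infimum attained and $\partial A$ is covered by the closed $2$-faces of $A$. I do not foresee any substantial obstacle.
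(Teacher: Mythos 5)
Your proposal is correct and takes exactly the approach intended by the paper: the corollary is a direct specialization of Theorem \ref{thmpolytope}, formula (\ref{0717b}), with faces grouped by dimension and the constants $\rho_\varphi$ and $\hH_\beta(D(\varphi)/d)$ read off as you describe. The paper gives no separate proof for this corollary, so your bookkeeping derivation (including the observation that $f_1 = \min_{\varphi} f_\varphi$ over facets because $\partial A$ is the union of the closed $2$-faces) is precisely the intended argument.
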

\begin{coro}[\cite{RGG}]
\label{thmbox}
Suppose $A = [0,1]^d$, and $f|_A$ is continuous at $x$ for all $x \in 
\partial A$. For $1 \leq j \leq d$ let $\partial_j$ denote
the union of all $(d-j)$-dimensional faces of $A$, 
	and let $f_j$ denote the infimum of
$f $ over $\partial_j$. Assume 
	(\ref{kcond})  with $\beta < \infty$. 
  Then
\bea
\lim_{n \to \infty} \left( \frac{ n  L_{n,k(n)}^d}{\log n}\right) =
	\max_{0 \leq j \leq d} \left( \frac{2^j \hH_\beta(1-j/d)  }{ \theta_d f_j } \right),
	~~~~~ a.s.
\label{0505b}
\eea
\end{coro}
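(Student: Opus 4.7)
The plan is to deduce Corollary \ref{thmbox} as a direct specialization of Theorem \ref{thmpolytope} applied to $A=[0,1]^d$. The continuity and positivity hypotheses transfer immediately, so what remains is to identify $\Phi^*(A)$ explicitly and compute $D(\varphi)$, $\rho_\varphi$, and $f_\varphi$ for each face $\varphi$, then reorganize the maximum in (\ref{0717b}) by grouping faces of the same dimension.

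First I would catalogue the faces. A face $\varphi$ of $[0,1]^d$ of dimension $d-j$ (for $0 \le j \le d$) is obtained by fixing some subset $S \subset [d]$ of $|S|=j$ coordinates to values in $\{0,1\}$, while the remaining $d-j$ coordinates range freely over $[0,1]$. For $j=0$ this is $A$ itself (which I treat as the top-dimensional element of $\Phi^*(A)$), and $D(\varphi)=d-j$ in each case, so $\hH_\beta(D(\varphi)/d) = \hH_\beta(1-j/d)$.

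The main computation is $\rho_\varphi$. Pick any $x$ in the relative interior of such a face $\varphi$. After an orthogonal change of coordinates centred at $x$ that sends the free directions of $\varphi$ to the first $d-j$ axes and each fixed coordinate to the direction pointing into $A$, the set $A$ agrees locally with $x + \cK_\varphi$ where
\[
\cK_\varphi = \R^{d-j} \times [0,\infty)^j.
\]
Hence $\cK_\varphi \cap B(o,1)$ is the intersection of the Euclidean unit ball with a product of a full subspace and a $j$-fold orthant, which by symmetry has volume exactly $2^{-j}\theta_d$. Thus $\rho_\varphi = 2^{-j}\theta_d$, independent of which face of codimension $j$ we pick. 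Consequently each face $\varphi$ of dimension $d-j$ contributes the term
\[
\frac{\hH_\beta(D(\varphi)/d)}{f_\varphi \rho_\varphi}
\;=\; \frac{2^j \hH_\beta(1-j/d)}{\theta_d\, f_\varphi}
\]
to the maximum in (\ref{0717b}).

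Finally I would collapse the maximum over $\Phi^*(A)$ to a maximum over $j$. Within a fixed codimension $j\ge 1$, the contributions differ only through $f_\varphi$; maximizing the displayed expression over faces of that codimension is the same as minimizing $f_\varphi$ over them, and by continuity of $f|_A$ on $\partial A$ this minimum equals $\inf_{\partial_j} f = f_j$. For the top-dimensional term $j=0$ we have $f_\varphi = f_0$ by definition. Substituting these into (\ref{0717b}) yields (\ref{0505b}). The only step that needs care is the local identification of $\cK_\varphi$ for faces of every codimension; this is routine but is the one place where the specific geometry of the cube (all dihedral angles equal to $\pi/2$, so that the local cone is an orthant) enters, and it is precisely what forces the clean factor $2^{-j}$.
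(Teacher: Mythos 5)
Your proposal is correct and follows exactly the route the paper intends: the paper presents Corollary \ref{thmbox} (along with Corollaries \ref{thpolygon} and \ref{thpoly}) as special cases of Theorem \ref{thmpolytope} that are ``spelled out,'' and your computation of $\rho_\varphi = 2^{-j}\theta_d$ for a codimension-$j$ face of the cube, together with the collapse of the max over $\Phi^*(A)$ to a max over $j$ via $\min_{D(\varphi)=d-j} f_\varphi = f_j$, is precisely the arithmetic needed.
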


It is perhaps worth spelling out what the preceding results mean in the
special case where $\beta =0$ (for example, if $k(n)$ is a constant)  and also
$\mu$ is the uniform distribution on $A$ (i.e. $f(x) \equiv f_0$  on $A$).
In this case, the right hand side of (\ref{0717b}) comes to 
%$f_0^{-1}%\max(1/ \theta_d,
$\max_{\varphi \in \Phi^*(A)} \frac{D(\varphi)}{(d f_0 \rho_\varphi )}$.
The limit in (\ref{polystrong2}) comes to $1/(\pi f_0)$,
while the limit in Corollary \ref{thpoly} comes to 
$f_0^{-1} \max[ 1/\pi, \max_e (1 /(2 \alpha_e))]$.

So far we have only presented results for the largest $k$-nearest neighbor link.
A closely related threshold is the \emph{$k$-connectivity threshold} defined by 
\begin{align*}
M_{n,k}:= \inf\{r>0: G(\X_n,r) \mbox{ is } k \mbox{-connected} \},
\end{align*}
where a graph $G$ of order $n$ is said to be $k$-connected ($k<n$) if $G$ cannot be disconnected by the removal of at most  $k-1$ vertices.  
Set $M_{n,1}=M_n$. Then $M_n$ is the connectivity threshold. 

Notice that  for all $k,n$ with $k<n$ we have
\begin{align}
L_{n,k}\le M_{n,k}.
\label{e:LleM}
\end{align}
 Indeed, if $r<L_{n,k}$, then there exists $i\in[n]$ such that $\deg X_i <k$ in $G(\X_n,r)$.  Then the removal of all vertices adjacent to $X_i$  disconnects
 $G(\X_n,r)$, implying that $r< M_{n,k}$. This proves the claim.

Our second main result shows that  $(M_{n,k}/L_{n,k}) \to 1 $
almost surely as $n \to \infty$. For this result we need $d \geq 2$.
%is  asymptotic to and $L_{n,k}$ almost surely.
  
  \begin{theo}
  \label{t:M}
	  Suppose $d \geq 2$.
  Suppose $A$ is a compact convex finite polytope in $\R^d$. 
	Assume that
	$f|_A$ is continuous at $x $ for all $x \in \partial A$,
	and that $f_0>0$.
	Assume $k(\cdot)$ satisfies (\ref{kcond})
	 % with $\beta < \infty$.
	Then,
	almost surely,
	  \begin{align}
		  \lim_{n \to \infty} n M_{n,k(n)}^d/ k( n)   & = 
		 % \max \left( \frac{1}{f_0 \theta_d}, 
		  \max_{\varphi \in \Phi^*(A)} \left(
	\frac{1}{f_\varphi \rho_\varphi }   
\right) 
		  %\right) 
%~~~~~ ~~
%~~~~~ ~~~~~
		  &{\rm if} ~\beta = \infty;
	\label{07172}
\\
		  \lim_{n \to \infty} n M_{n,k(n)}^d/ \log n &  =
		 % \max \left( \frac{ \hH_\beta(1) }{f_0 \theta_d} ,
	\max_{\varphi \in \Phi^*(A)} \left( \frac{
		\hH_\beta( D(\varphi)/d)  }{ 
f_\varphi \rho_\varphi}  
	\right) 
		  %\right)
		 % ~~~~~~ 
		  &{\rm if} ~\beta < \infty.
	%\nonumber \\
	\label{07171}
	  \end{align}
  \end{theo}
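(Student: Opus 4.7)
The plan is to leverage Theorem~\ref{thmpolytope} together with the bound $L_{n,k} \leq M_{n,k}$ in (\ref{e:LleM}). The latter immediately delivers the $\geq$ direction in Theorem~\ref{t:M}: in either regime,
\[
\liminf_{n \to \infty} \frac{n M_{n,k(n)}^d}{a_n} \;\geq\; \lim_{n \to \infty} \frac{n L_{n,k(n)}^d}{a_n},
\]
where $a_n = \log n$ when $\beta < \infty$ and $a_n = k(n)$ when $\beta = \infty$. The content of the theorem is therefore the matching upper bound on $\limsup n M_{n,k(n)}^d/a_n$.

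To that end, fix $\epsilon > 0$ and choose $r = r_n$ so that $n r_n^d/a_n$ is slightly larger than $(1+\epsilon)$ times the claimed limit. Applying Theorem~\ref{thmpolytope} (if necessary with $k(n)$ nudged upwards so that (\ref{kcond}) is preserved with the same $\beta$), we may assume that almost surely, for all large $n$, the minimum degree of $G(\X_n, r_n)$ is at least $k(n)$. If $G(\X_n, r_n)$ were nonetheless not $k(n)$-connected, there would exist a separator $S \subset \X_n$ with $|S| \leq k(n)-1$ whose removal splits $\X_n \setminus S$ into two nonempty parts $U, W$ with no cross edges. Because each vertex of $U$ has at least $k(n) > |S|$ neighbors in $G(\X_n, r_n)$, it must retain a neighbor inside $U$, so $|U|, |W| \geq 2$. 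The task is to exclude almost surely, for all large $n$, the existence of such a partition at scale $r_n$.

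The classical route is a dichotomy on $|U|$ (taken as the smaller side). In the \emph{small} case $|U| \leq \Lambda$ for a large constant $\Lambda$, the set $U$ sits inside a ball of radius $\Lambda r_n$ and the other $\X_n$-points of that ball all lie in $S$; a covering by a grid of centres combined with a Chernoff-type tail bound for the Poisson count---built on the same local mass estimates $\mu(B(x,r_n)) \gtrsim f_\ph \rho_\ph r_n^d$ near a face $\ph$ that power Theorem~\ref{thmpolytope}---shows this event is already rarer than the minimum-degree event at scale $r_n$, hence vanishingly rare. In the \emph{large} case, the $r_n$-neighborhood of $U$ inside $A$ forms a ``moat'' whose $\mu$-mass grows at least as an isoperimetric power of $|U|$, and which must contain fewer than $k(n)$ points of $\X_n$; covering the moat by finitely many balls and union-bounding yields the same conclusion. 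The principal obstacle, as always in this subject, lies in the local analysis near the low-dimensional faces of $A$, where $\rho_\ph$ is smallest and $L_{n,k(n)}$ is actually realized, since the small-case balls and large-case moats there have awkward polytopal geometry and one must match the exponent $\hH_\beta(D(\ph)/d)/(f_\ph \rho_\ph)$ precisely. I would expect Propositions~\ref{gammalem} and~\ref{p:up}, advertised in the introduction as general metric-space bounds, to encapsulate exactly these estimates, so that once they are established in the polytopal setting to produce the $L_{n,k(n)}$ asymptotics they can be applied with the ``forbidden region'' enlarged to accommodate the extra up-to-$k(n)$ points allowed in the separator.
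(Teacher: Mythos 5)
Your proposal follows essentially the same route as the paper: the lower bound comes from $L_{n,k}\le M_{n,k}$ together with Theorem~\ref{thmpolytope}, and the upper bound is obtained by excluding $(k(n)-1)$-separating configurations at scale $r_n$ via a small/large dichotomy (small-diameter separating sets in Lemma~\ref{l:small}, large-diameter separating pairs in Lemma~\ref{l:big}), packaged abstractly as Proposition~\ref{p:up} and fed the polytope-specific inputs --- the doubling, ordering, unicoherence, and two-component conditions of Lemma~\ref{OTG}/Proposition~\ref{p:geo}, the face-territory partition $T(\ph,r)$, and the Brunn--Minkowski ``moat'' lower bound of Lemma~\ref{l:poly_volume}. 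Your description of the large case as a direct isoperimetric moat estimate understates the work in the paper, which needs a Peierls-type enumeration of connected pixel shapes (Lemma~\ref{l:pa}) and the unicoherence/topological argument to control the boundary between the two components, but the overall architecture is the one you anticipate.
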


\begin{remk}{\rm
One can spell out consequences of Theorem \ref{t:M} in dimensions $d=2,3$ and the case of $[0,1]^d$	 with exactly the same statement as in Corollaries \ref{thpolygon}-\ref{thmbox}. }
\end{remk}

\begin{remk}{\rm
	Theorems \ref{thmpolytope} and \ref{t:M} extend  earlier work found
	in \cite{RGG} on the case where $A$ is the unit cube, to
	more general polytopal regions. The case where $A$ has a smooth
	boundary is also considered in \cite{RGG} (in this case with
also  $k(n)=$
	const., the result was first given
	in \cite{LNNL} for $L_{n,k}$ and in \cite{pen99} for $M_{n,k}$).}
\end{remk}

\begin{remk}
	{\rm In \cite{CovPaper}, similar results are given  
for	the {\em $k$-coverage threshold} $R_{n,k}$, which is given by 
\bea
R_{n,k} : = \inf \left\{ r >0: \X_n   (B(x,r)) \geq k 
~~~~ \forall x \in A  \right\};
~~~ n,k  \in \N.
\label{oldRnkdef}
\label{oldRdashdef}
\eea
	Our  results here, together with \cite[Theorem 4.2]{CovPaper},
	show that both $L_{n,k(n)}$ and $M_{n,k(n)}$ 
	are asymptotic to $R_{n,k(n)}$ almost surely, as $n \to \infty$.
	}
\end{remk}

\section{Proofs}

%\section{Strategy of proofs}
\label{secstrategy}
\label{secLLNpfs}
\allco
In this section we prove the results stated in Section \ref{secLLN}.
Throughout this section we are assuming we are  given a constant 
$\beta \in [0,\infty]$ and a sequence $(k(n))_{n \in \N}$ satisfying 
(\ref{kcond}).  Recall that $\mu$ denotes the distribution of $X_1$,
and this has a density $f$ with support $A$, and that 
$L_{n,k}$ is defined at (\ref{Rnkdef}).  Recall that $\hH_\beta(x)$ is
defined to be the $ y \geq \beta $ such that $y H(\beta/y) =x$, where
$H(\cdot)$ was defined at (\ref{Hdef}).

%We shall now prove the results
%%We first give an overview of the strategy for the proofs, in Section
%%\ref{secLLNpfs}, of the strong laws of large numbers
%that were stated in Section \ref{secLLN}.

For $n \in \N$ and $p \in [0,1]$ let $\Bin(n,p)$ denote
  a binomial random variable with parameters $n,p$.
  Recall that $H(\cdot)$ was defined at (\ref{Hdef}), 
	  and
  $Z_t$ is  a Poisson$(t)$ variable for $t>0$.
  The proofs in this section  rely heavily on the
  following lemma. %{\bf (might not need all of these?)}.
\begin{lemm}[Chernoff bounds]
\label{lemChern}
Suppose  $n \in \N$, $p \in (0,1)$, $t >0$ and $0 \leq k < n$. 
%Set $\eta := np$.

	(a) If $k \geq np$ then $\Pr[ \Bin(n,p) \geq k ] \leq \exp\left( 
	- np H(k/(np) )
\right)$. 

	(b) If $k \leq np$ then $\Pr[ \Bin(n,p) \leq k ] \leq \exp\left( - np H(k/(np))
\right)$. 

	(c) If $k \geq e^2 np$ then 
 $\Pr[ \Bin(n,p) \geq k ] \leq \exp\left( - (k/2)
	\log(k/(np))\right) \leq e^{-k}$.

	(d) If $k < t$ then $\Pr[Z_t \leq k ] \leq
	\exp(- t H(k/t))$.

	(e) If $k \in \N$ then $\Pr[Z_t = k ] \geq
	(2 \pi k)^{-1/2}e^{-1/(12k)}\exp(- t H(k/t))$.
\end{lemm}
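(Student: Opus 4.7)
The plan is to treat parts (a)--(d) by the standard exponential Chernoff method and part (e) by a direct computation using Stirling's formula. Throughout I will use the identity $t H(k/t) = t - k + k \log(k/t)$, immediate from (\ref{Hdef}).

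For (a), the first step is, for any $\lambda > 0$, to apply Markov's inequality together with the elementary bound $1 - p + p e^\lambda \leq \exp(p(e^\lambda - 1))$:
\[
\Pr[\Bin(n,p) \geq k] \leq e^{-\lambda k} \E e^{\lambda \Bin(n,p)} = e^{-\lambda k}(1 - p + p e^{\lambda})^{n} \leq \exp\bigl(-\lambda k + np(e^\lambda - 1)\bigr).
\]
Optimizing over $\lambda \geq 0$ gives $e^\lambda = k/(np)$, which is admissible precisely when $k \geq np$; substituting and simplifying via the identity above yields the claimed bound $\exp(-np H(k/(np)))$. Part (b) follows by the symmetric argument with $\lambda \leq 0$, and part (d) is entirely analogous, using the Poisson moment generating function $\E e^{\lambda Z_t} = \exp(t(e^\lambda - 1))$.

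For (c), I would argue directly from (a) at the level of exponents:
\[
-np H(k/(np)) = -np + k - k\log(k/(np)) \leq k\bigl(1 - \log(k/(np))\bigr) \leq -\tfrac{1}{2} k \log(k/(np)),
\]
where the final inequality uses $\log(k/(np)) \geq 2$, equivalent to the hypothesis $k \geq e^2 np$. The same hypothesis also gives $(k/2)\log(k/(np)) \geq k$, from which the bound by $e^{-k}$ follows.

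For (e), starting from the exact formula $\Pr[Z_t = k] = e^{-t} t^k / k!$ and the Stirling bound $k! \leq \sqrt{2\pi k}(k/e)^k \exp(1/(12k))$ (valid for $k \geq 1$, which suffices since $k \in \N$), one obtains
\[
\Pr[Z_t = k] \geq (2\pi k)^{-1/2} e^{-1/(12k)} \exp\bigl(-t + k + k \log(t/k)\bigr),
\]
and the exponent equals $-t H(k/t)$ by the identity stated at the outset. The main obstacle in all of this is purely bookkeeping: every ingredient is textbook, and the only care required is to verify that the minimizing $\lambda$ in the Chernoff optimization lies on the correct side of zero under the stated hypothesis, and that the algebraic form of the exponent produced by the optimization (or by Stirling) matches the rate function $t H(k/t)$ used throughout the paper.
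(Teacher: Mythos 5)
Your proof is correct in all five parts, and the method is the standard one: the exponential-Markov (Chernoff) argument with the MGF relaxation $1-p+pe^\lambda \le \exp(p(e^\lambda-1))$ to produce the Poissonized rate $npH$, plus Stirling's two-sided bound for part (e). The paper itself does not prove the lemma but cites \cite[Lemmas 1.1, 1.2, 1.3]{RGG}, and those lemmas are established by exactly this textbook route, so your proposal matches the paper's intended argument.
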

\begin{proof}
	See e.g. \cite[Lemmas 1.1, 1.2 and 1.3]{RGG}.
	%for (a), (b) and (c), and \cite[Lemma1.3]{RGG} for (d).
\end{proof}

\subsection{{\bf A general lower  bound}}
\label{subsecgenbds}
%\subsection{General lower and upper bounds}

In this subsection 
we present an asymptotic lower bound on $L_{n,k(n)}$, not requiring any extra
assumptions on $A$. In fact, $A$ here can be any metric space
endowed with a Borel probability  measure $\mu$ which satisfies the following 
for some $\epsilon' >0 $ and some $d>0$:
\begin{align}
\label{e:vol_lowB}
\mu(B(x,r))\ge \epsilon' r^d, \quad \forall ~ r\in (0,1), x\in A.
\end{align} 
 The definition of $L_{n,k}$ at (\ref{Rnkdef})
carries over in an obvious way to this general setting.

Later, we shall derive the results
stated in Section \ref{secLLN}
by  applying the results of this subsection to the different 
regions within $A$ (namely interior, boundary, and
lower-dimensional faces).

Given $r >0, a>0$,   define the `packing number' $ \nu(r,a) $
 be the largest number $m$ such that there exists a collection of $m$ disjoint closed balls of radius $r$ centred on points of $A$,
each with $\mu$-measure at most $a$.
%The proof of the following lemma implements,
%for a general metric space,
%the strategy outlined in Section \ref{secstrategy}. 

\begin{prop}[General lower bound]
	\label{gammalem}
	Assume  \eqref{e:vol_lowB} with $d,\epsilon'>0$.
	Let $a >0, b \geq 0$. Suppose
	$\nu(r,a r^d) = \Omega (r^{-b})$ as $r \downarrow 0$.
	Assume \eqref{kcond}.
	Then
	almost surely, if $\beta = \infty$ then
	$\liminf_{n \to \infty} \left(n  L_{n,k(n)}^d/k(n) \right) \geq
	1/a$. If $\beta < \infty$ then
	$\liminf_{n \to \infty} \left(n  L_{n,k(n)}^d/\log n \right) 
	\geq a^{-1} \hH_\beta(b/d)$, almost surely.
\end{prop}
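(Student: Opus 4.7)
The plan is to prove the lower bound by showing, with probability tending to $1$ fast enough for a Borel--Cantelli argument, that $\X_n$ contains a point whose $r_n$-neighbourhood has at most $k(n)$ points of $\X_n$ (which forces $L_{n,k(n)}>r_n$ by \eqref{Rnkdef}), where $r_n$ is a deterministic sequence chosen just below the target threshold.

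Fix small $\epsilon,\eta>0$ and set $n r_n^d=(1-\epsilon)a^{-1}\hH_\beta(b/d)\log n$ when $\beta<\infty$, or $n r_n^d=(1-\epsilon)a^{-1}k(n)$ when $\beta=\infty$. By the packing hypothesis, for each $n$ I select $M_n\geq c\,r_n^{-b}$ centres $x_1^{(n)},\ldots,x_{M_n}^{(n)}\in A$ with pairwise disjoint inner balls $B(x_i^{(n)},r_n)$, each of $\mu$-measure at most $a r_n^d$. For each $i$ I consider the event $F_i=\{\X_n\cap B(x_i^{(n)},\eta r_n)\neq\emptyset\}\cap\{\X_n(B(x_i^{(n)},(1+\eta)r_n))\leq k(n)\}$. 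On $F_i$, the inclusion $B(X_j,r_n)\subseteq B(x_i^{(n)},(1+\eta)r_n)$ for any $X_j\in\X_n\cap B(x_i^{(n)},\eta r_n)$ gives $\X_n(B(X_j,r_n))\leq k(n)$, forcing $L_{n,k(n)}>r_n$. Taking $\eta$ small (rather than using a naive outer ball $B(x_i^{(n)},2r_n)$) is crucial: the latter would incur a spurious factor $2^d$ that prevents matching the sharp constant.

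The key estimate is a lower bound on $\Pr(F_i)$. Writing $g(y):=y H(\beta/y)$, the defining relation of $\hH_\beta$ gives $g(\hH_\beta(b/d))=b/d$, and $g'(y)=1-\beta/y>0$ on $(\beta,\infty)$, so for $\epsilon,\eta$ sufficiently small the value $y:=(1+\eta)^d(1-\epsilon)\hH_\beta(b/d)$ satisfies $g(y)<b/d$. Applying Lemma~\ref{lemChern}(e) to the Poisson count in $B(x_i^{(n)},(1+\eta)r_n)$, combined with the fact that $\Pr(\X_n\cap B(x_i^{(n)},\eta r_n)\neq\emptyset)\to 1$ by \eqref{e:vol_lowB} (which forces $n\mu(B(x_i^{(n)},\eta r_n))\to\infty$), one shows $\Pr(F_i)\geq n^{-g(y)}/\mathrm{poly}(\log n)$; the case $\beta=\infty$ is analogous, with the standard Poisson lower-tail estimate giving $\Pr(F_i)$ bounded away from $0$ since the mean count $(1+\eta)^d(1-\epsilon)k(n)$ is strictly below $k(n)$. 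Because $M_n\gtrsim r_n^{-b}\gtrsim n^{b/d}/\mathrm{poly}(\log n)$, summing yields $\sum_i\Pr(F_i)\gtrsim n^{b/d-g(y)}/\mathrm{poly}(\log n)$, a positive power of $n$.

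After a bounded-factor thinning so that the enlarged balls $B(x_i^{(n)},(1+\eta)r_n)$ are pairwise disjoint, the events $F_i$ become independent, giving $\Pr(\bigcap_i F_i^c)\leq\exp(-\sum_i\Pr(F_i))$, which is summable in $n$; Borel--Cantelli together with standard de-Poissonization (via $|Z_n-n|=O(\sqrt{n\log\log n})$ a.s.) then delivers the claimed liminf for $L_{n,k(n)}$ almost surely, and letting $\epsilon,\eta\downarrow 0$ finishes the proof. The hardest step is the Chernoff estimate for $\Pr(F_i)$: threading the constants through so that the strict inequality $g(y)<b/d$ survives requires the slim margin $\eta$, and bounding the ratio $\mu(B(x_i^{(n)},(1+\eta)r_n))/\mu(B(x_i^{(n)},r_n))$ by a factor close to $(1+\eta)^d$ is the delicate point that must be extracted from the interplay between \eqref{e:vol_lowB} and the packing assumption in the abstract metric space setting.
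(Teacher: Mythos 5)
Your high-level plan (choose $r_n$ just below the target, find many disjoint balls where the count is small, use a Chernoff lower bound and Borel--Cantelli) is the right template and matches the paper in spirit. However, there are two genuine gaps that would defeat the argument as written.

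\textbf{The enlarged-ball measure is not controlled.} Your events $F_i$ require $\X_n(B(x_i^{(n)},(1+\eta)r_n))\le k(n)$, and the Chernoff lower bound you invoke needs an \emph{upper} bound on $n\,\mu(B(x_i^{(n)},(1+\eta)r_n))$. But the packing hypothesis gives you $\mu(B(x_i^{(n)},r_n))\le a r_n^d$ only for the balls at the \emph{exact} packing radius $r_n$, and \eqref{e:vol_lowB} is a one-sided \emph{lower} bound. Nothing in the hypotheses of Proposition~\ref{gammalem} prevents $\mu$ from being concentrated in the shell $B(x_i^{(n)},(1+\eta)r_n)\setminus B(x_i^{(n)},r_n)$, so the ratio you hope to bound "by a factor close to $(1+\eta)^d$" can be arbitrarily large in the abstract metric-space setting in which the proposition is stated. (It is true for the polytope application, but there it is an output of the later geometric lemmas, not an input.) The paper sidesteps this entirely by shrinking the \emph{inner} ball rather than enlarging the outer one: it requires nonemptiness of $B(y_n,\delta_0 r_n)$ and a small count in $B(y_n,r_n)$ --- both inside the ball whose $\mu$-measure the packing controls --- and concedes a factor $(1-\delta_0)$ in the final radius, which is then removed by letting $\delta_0\downarrow 0$. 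You should restructure in this way.

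\textbf{The case $b=0$, $\beta\in(0,\infty)$ is not handled.} Your count of centres is $M_n=\Omega(r_n^{-b})$, so when $b=0$ you have only $O(1)$ trials, and since $g(y)=yH(\beta/y)\ge 0$ always, your per-trial probability is $\Pr(F_i)\asymp n^{-g(y)}\to 0$; thus $\sum_i\Pr(F_i)\to 0$, $\Pr(\cap_i F_i^c)\not\to 0$, and no Borel--Cantelli conclusion is available along the full sequence. Yet the proposition asserts a nontrivial bound $\liminf n L_{n,k(n)}^d/\log n\ge a^{-1}\beta$ in this case. The paper treats $b=0$ separately with a subsequence argument (working along $z(n)=n^K$ for large $K$, using monotonicity-type comparisons to fill in the gaps between subsequence times); some such device is essential here and is missing from your proposal. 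Relatedly, for $b>0$ your requirement $g(y)<b/d$ forces $(1+\eta)^d(1-\epsilon)<1$, so $\epsilon$ and $\eta$ cannot both be chosen independently small; this is fixable but should be stated. Finally, the remark about independence after thinning applies only to the Poissonized process; working directly with $\X_n$ the counts in disjoint regions are multinomial, not independent, so the Poissonization/de-Poissonization needs to be set up before, not after, the Borel--Cantelli step (the paper does this with $\Po_{\lambda^\pm(n)}$ and $\lambda(n)=n\pm n^{3/4}$).
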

\begin{proof}
	%{\bf (Add $\beta = \infty $ argument if include this.)}
First suppose $\beta = \infty$.  Let $\nalpha \in (0,1/a )$.  Set 
$r_n := \left( \nalpha k(n)/n \right)^{1/d}$, $n \in \N$.  By (\ref{kcond}), 
$r_n \to 0$ as $n \to \infty$.  Then, given $n$ sufficiently large, we have 
$\nu(r_n,ar_n^d ) >0$ so  we can find $y_n \in A$ such that 
$ \mu(B(y_n,r_n)) \leq a r_n^d$, and hence
$ n \mu (B(y_n,r_n)) \leq a \nalpha k(n) $.  If 
$k(n) \leq e^2 n \mu(B(y_n,r_n))$ (and hence 
$n \mu(B(y_n,r_n)) \geq e^{-2} k(n)$), then since $\X_n(B(y_n,r_n))$ is binomial
with parameters $n$ and $\mu(B(y_n,r_n))$, by Lemma \ref{lemChern}(a)
we have that
	\bean
%	\Pr[ L_{n,k(n)} \leq r_n] \leq
	\Pr[ \X_n(B(y_n,r_n) ) \geq k(n) ] & \leq & \exp \left( - n \mu(B(y_n,r_n)) H \left(\frac{k(n)}{n \mu( B(y_n,r_n))} \right) \right)
	\\
	& \leq & \exp \left( - e^{-2} k(n) H \left( (a \nalpha)^{-1} \right) \right),
	\eean
while  if $k (n) > e^2 n \mu(B(y_n,r_n))$ then by Lemma \ref{lemChern}(c),
	%$\Pr[ L_{n,k(n)} \leq r_n] \leq e^{-k(n)}$.
	$\Pr[\X_n (B(y_n,r_n)) \geq k(n)] \leq e^{-k(n)}$.
Therefore $ \Pr[\X_n (B(y_n,r_n)) \geq k(n)] $ is summable in $n$ because 
	$k(n)/\log n \to \infty$ as $n \to \infty$ by (\ref{kcond}). 
	
Let $\delta_0 \in(0,1)$. By (\ref{e:vol_lowB}) $\mu(B(y_n,\delta_0 r_n)
\geq \eps' \delta_0^d u k(n)/n$.  Therefore by Lemma \ref{lemChern}(b), 
	\linebreak $\Pr[\X_n(B(y_n,\delta_0 r_n)) =0]
	\leq \exp (-\eps' \delta_0^d u k(n))$, which is summable in $n$.

	Thus by the Borel-Cantelli lemma, almost surely  event
	$F_n:= \{\X_n (B(y_n,r_n)) < k(n) \} \cap \{\X_n(B(y_n,\delta_0 r_n)) >0\}$ 
	occurs for all but finitely many $n$. But if $F_n$ occurs then
	$L_{n,k(n)} \geq (1-\delta_0) r_n$ so that $n L_{n,k(n)}^d/k(n)
	\geq (1-\delta_0)^d u $.
	%and hence $ \liminf n  L_{n,k(n)}^d /k(n)  \geq \nalpha$.
	This gives the result for $\beta = \infty$. 

	Now suppose instead that $\beta < \infty$.
	Suppose first that
	$b=0$, so that $\hH_\beta(b/d) = \beta$.
	Assume that   $\beta >0$ 
	(otherwise the result is trivial).
	Choose $\beta' \in (0, \beta)$.  Let $\delta > 0$
with $\beta' < \beta- 2\delta$ and with
$
\beta' H \left( \frac{ \beta - 2 \delta}{\beta' } \right) > \delta.
$
This is possible because $H(\beta/\beta')>0$ and $H(\cdot)$ is continuous.
For $n \in \N$, set $r_n := (  (\beta' \log n)/ (a n) )^{1/d}$. Also set
$k'(n)= \lceil (\beta - \delta) \log n \rceil$, and
$k''(n)= \lceil (\beta - 2 \delta) \log n \rceil$.
By assumption $\nu(r_n,a r_n^d) = \Omega(1)$, so for all $n$ large enough, 
	we can (and do) choose $x_n \in A$ such that
$
	n \mu(B(x_{n},r_n)) \leq n a r_n^d = \beta' \log n. 
$
Then by a simple coupling, and Lemma \ref{lemChern}(a),
\bean
\Pr[
\X_{n} ( B(x_{n},r_n) ) \geq k''(n)
	]
	& \leq & 
	\Pr \left[ \Bin\left(n,
	%\frac{\beta' \log n}{n}
	(\beta' \log n)/n)
	\right) \geq k''(n) \right]
\\
	& \leq & \exp \left( - \left( 
	\beta' \log n
	\right) 
H \left( \frac{\beta- 2 \delta }{\beta' } \right)  
\right) 
\leq n^{- \delta}.
\eean
	 Let $\delta' \in (0,1)$.
	By \eqref{e:vol_lowB}, for $n $ large enough and all $x \in A$, 
	\bean
	n \mu(B(x, \delta' r_n)) \geq  n \eps' (\delta' r_n)^d 
	=  \eps' (\delta')^d (\beta'/a) \log n
	\eean
	so that by Lemma \ref{lemChern}(b), 
	$\Pr [\X_n(B(x,\delta' r_n)) =0] \leq n^{-\eps'(\delta')^d\beta'/a}$. 
	%by Lemma \ref{LemCher}, $\Pr[

	Now choose $K \in \N$ such that $\delta K >1$ and
	$K\eps' (\delta')^d \beta'/a >1$. For $n \in \N$ set $z(n):=n^K$.
	For all large enough $n$ we have
	$k'(z(n)) \geq k''(z(n+1))$, so by
	the preceding estimates, 
	\bean
	\Pr[\X_{z(n+1)}(B(x_{z(n+1)},r_{z(n+1)})) \geq k'(z(n))]
	~~~~~~~~~~~~~~~~~~~~~
	~~~~~~~~~~~~~
	~~~~~~~~~~~~~
	\\
	~~~~~~~~~~~~~
	\leq
	\Pr[\X_{z(n+1)}(B(x_{z(n+1)},r_{z(n+1)})) \geq k''(z(n+1))]
	\leq (n+1)^{-\delta K},
	\eean
and since $x_{z(n+1)} \in A $, also
$\Pr[ \X_{z(n)}(B(x_{z(n+1)},\delta' r_{z(n)})) =0] 
	\leq n^{- \eps' (\delta')^d \beta' K/a}$.
Both of these upper bounds are summable in $n$, so by the Borel-Cantelli lemma,
almost surely   for all large enough $n$ we have the event
$$
\{\X_{z(n+1)}(B(x_{z(n+1)},r_{z(n+1)})) < k'(z(n))\} \cap 
\{\X_{z(n)}(B(x_{z(n+1)},\delta' r_{z(n)})) >0 \}.
$$
Suppose the above event occurs and suppose 
$m \in \N$ with $z(n ) \leq m \leq z(n+1)$. Note that 
$r_{z(n+1)}/r_{z(n)}\to 1$ as $n\to\infty$.  Then, provided $n$ is large enough,
$$
L_{m,k'(z(n))} \geq r_{z(n+1)}- \delta' r_{z(n)} \geq (1-\delta')^2 r_{m},
$$
and moreover $k'(z(n)) \leq k(m)$ so that $L_{m,k(m)} \geq (1- \delta')^2 r_m$. Hence it is almost surely the case that
$$
\liminf_{m \to \infty} (m L_{m,k(m)}^d/\log m) \geq (1- \delta')^{2d}
\liminf_{m \to \infty} (m r_{m}^d/ \log m) = (1-\delta')^{2d} a^{-1}\beta',
$$
	 and this yields the result for this case.
%
%Hence there is some  $\delta'' >0$ such that for large $n$,
%\bean
%\Pr[ n L_{n,k'(n)}^d \leq (1-\delta')^d(\beta'/a) \log n ]
%= \Pr[ L_{n,k'(n)} \leq (1-\delta') r_n] \\
%\leq \Pr[ \X_n(B(x_n,r_n)  \geq k_n+1 ]
%+ \Pr[ \X_n(B(x_n,\delta' r_n)  =0 ]
%\leq n^{-\delta''}.
%\eean
%This tends to zero, which gives us the result
%we want, but only in probability.

Now suppose instead that $\beta < \infty$ and $b > 0$.
Let $\nalpha \in( a^{-1}\beta,  a^{-1} \hH_\beta(b/d))$; note that this
implies $\nalpha a  H(\beta/(\nalpha a)) < b/d$.  Choose $\eps >0$ such
that $(1+ \eps) \nalpha a H(\beta/(\nalpha a)) < (b/d)-9 \eps$.
Also let $\delta' \in (0,1)$.

For each $n \in \N$ set $r_n= (\nalpha (\log n)/n)^{1/d}$.
Let $m_n := \nu(r_n,a r_n^d)$, and choose $x_{n,1},\ldots,$ $x_{n,m_n} \in A$
such that the balls $B(x_{n,1},r_n),\ldots,B(x_{n,m_n},r_n)$ are
pairwise disjoint and each have $\mu$-measure at most $ar_n^d$.

Set $\lambda(n):= n+ n^{3/4}$ and $\lambda^-(n) := n- n^{3/4}$.
For $1 \leq i \leq m_n$, if $k(n) \ge 1$ then by a simple coupling, 
and Lemma \ref{lemChern}(e), 
\bean
	\Pr[ \Po_{\lambda(n)}( B(x_{n,i},r_n) ) \leq k(n) ]
	\geq \Pr[ Z_{\lambda(n) ar_n^d} \leq k(n)]
	%\nonumber 
	\\
\geq 
 \left( \frac{e^{-1/(12k(n))}}{ \sqrt{2 \pi k(n)}} \right) 
%~~~~~~~~~~~~~~~~~~~~~~~~~~~~
%\nonumber \\
%~~~~~~~~~~~~~~~~~~~~~~~~~~~  \times  
\exp 
\left( - \lambda(n) a r_n^d  
	H \left( \frac{k(n)}{\lambda(n) a r_n^d } \right)  
\right).
\eean
Now $\lambda(n)r_n^d/\log n \to \nalpha$ so by (\ref{kcond}),
$k(n)/(\lambda(n) a r_n^d) \to \beta/(\nalpha a)$ as $n \to \infty$. Thus by
the continuity of $H(\cdot)$, provided $n$ is large enough,
	for $1 \leq i \leq m_{n}$,
\bean
\Pr[\Po_{\lambda(n)}(B(x_{n,i},r_n))  \leq  k(n)]  
~~~~~~~~~~~~~~~~~~~~~~~~~~~  
\nonumber \\
\geq 
 \left( \frac{e^{-1/12} } {\sqrt{2 \pi (\beta +1 ) \log n }} \right)
%%~~~~~~~~~~~~~~~~~~~~~~~~~~~  \times
  \exp 
\left( - (1+ \eps)  a \nalpha   
H \left( \frac{\beta}{a \nalpha   } \right) \log n  
\right) .
%\label{0319b5}
\eean
Hence, by our choice of $\eps$, there is a constant $c >0 $ such that for  all
large enough $n$ and all $i \in [m_n]$ we have
\bea
\Pr[\Po_{\lambda(n)}(B(x_{n,i},r_n))  \leq  k(n)]  \geq c (\log n)^{-1/2}
	n^{   9 \eps - b/d  } \geq n^{8\eps -b/d}. 
\label{0319b5}
\eea
Since $x_{n,i} \in A$, by \eqref{e:vol_lowB}, for $n $ large enough and
$1 \leq i \leq m_n$ we have
    $ \mu(B(x_{n,i}, \delta' r_n)) \geq   \eps' (\delta' r_n)^d $
    (as well as $ \mu(B(x_{n,i},  r_n)) \leq   a  r_n^d $).
	Thus, given the value of
$\Po_{\la(n)}(B(x_{n,i},r_n))$, 
the value
of $\Po_{\la^-(n)} (B(x_{n,i},\delta'r_n))$ 
is binomially distributed with probability parameter bounded away from zero.
Also $\max_{1\leq i \leq m_n}
\E[\Po_{\la(n)}(B(x_{n,i},r_n))]$ tends to infinity as $n \to \infty$. 
Therefore there exists $\eta >0$ such that for all large enough $n$,
defining the event
$$
E_{n,i} := \{\Po_{\la(n)} (B(x_{n,i},r_n)) \leq k(n) \}
\cap \{ \Po_{\la^-(n)}(B(x_{n,i},\delta' r_n) \geq 1 \},
$$
we have for all large enough $n$ that
$$
\inf_{1 \leq i \leq m_n} \Pr[E_{n,i}|\Po_{\la(n)}(B(x_{n,i},r_n)) \leq k(n)]
\geq \eta.
$$
	Hence, setting $E_n:= \cup_{i=1}^{m_n} E_{n,i}$,
	%\{ \Po_{\lambda(n)}(B(x_{n,i},r_n))  \geq  k(n) \}$,
	for all large enough $n$ we have
$$
	\Pr[E_n^c] \leq ( 1- \eta n^{8\eps-b/d})^{m_n} \leq \exp( - \eta m_n
	n^{8\eps -b/d} 
).
$$ 
By assumption $m_n = \nu(r_n , a r_n^d) = \Omega (r_n^{-b})$ so that 
for large enough $n$ we have $m_n \geq n^{(b/d) - \eps}$,
and therefore $\Pr[E^c_n]$ is
 is summable in $n$.

	 By Lemma \ref{lemChern}(d), 
and Taylor expansion of $H(x)$ about $x=1$ 
(see the print version of  \cite[Lemma 1.4]{RGG} for details; there may be a typo in the electronic
version),
for all $n $ large enough
$ \Pr[Z_{\lambda(n)} < n] \leq \exp( - \frac19  n^{1/2})$.
Similarly 
$ \Pr[Z_{\lambda^-(n)} > n] \leq \exp( - \frac19  n^{1/2})$.
%which is summable in $n$. 
If $E_n$ occurs, and $Z_{\lambda^-(n)} \leq n$, and $Z_{\la(n)} \geq n$,
then for some $ i \leq m_n $ there is at least one point
of $\X_n$ in $B(x_{n,i},\delta' r_n)$ and at most $k(n) $ points
of $\X_n$ in $B(x_{n,i},r_n)$, and hence $L_{n,k(n)}
> (1- \delta')r_n$. Hence
%Since $L_{m,k}$ is nonincreasing in $m$,
by the union bound
$$
\Pr[L_{n,k(n)} \leq r_n(1- \delta')] \leq
%\Pr[L_{Z_{\lambda(n)},k(n)} \leq r_n ]
\Pr[E_n^c]
%+ \Pr[ Z_{\lambda(n) } < n ]  \leq 
% \Pr[ E_n]+
+ \Pr[ Z_{\lambda(n)} < n] + 
 \Pr[ Z_{\lambda^-(n)} > n]  
,
$$
which is summable in $n$ by the preceding estimates. Therefore
by the Borel-Cantelli lemma,
\bean
 \Pr[  \liminf ( n  L_{n,k(n)}^d /\log n) \geq \nalpha (1-\delta')^d  ] 
 =1,
%~~~~~ \alpha < a^{-1} \hH_\beta(\gamma(a)/d),
~~~~~ \nalpha < a^{-1} \hH_\beta(b/d), \delta' \in (0,1),
\eean
so the result follows for this case too.
%\qed
\end{proof}

%Given  $r>0$, and $D \subset A$, define the `covering number' 
%\bea
%\kappa(D,r): = \min \{m  \in \N: \exists x_1,\ldots,x_m \in D ~{\rm with} ~ D \subset \cup_{i=1}^m B(x_i,r) \}.  \label{covnumdef}
%\eea

\subsection{{\bf Proof of  Theorem \ref{thmpolytope}}}
\label{secproofstrong}

In this subsection we assume, as in Theorem \ref{thmpolytope}, that
$A$ is a compact convex finite  polytope in $\R^d$.
We also assume that the probability measure $\mu$
has density $f$ with respect to Lebesgue measure on $\R^d$, and that  
$f|_A$ is continuous at $x$
for all $x \in \partial A$, and  that $f_0 >0$, recalling
 from (\ref{f0def}) that $f_0: = \essinf_{x \in A} f(x)$.
 Also we let $k(n)$ satisfy  (\ref{kcond})
for some $\beta \in [0,\infty]$.
Let $\Vol$ denote $d$-dimensional Lebesgue measure
%{\bf (maybe add $\beta = \infty$ later)}

%\subsection{{\bf Proof of Proposition \ref{thm1}}} 
%\label{subsecpfprop1}
 \begin{lemm}
	 \label{l:volLB}
	 There exists $\epsilon' >0$ depending only on $f_0$ and $A$,
	 such that
 \eqref{e:vol_lowB} holds.
 \end{lemm}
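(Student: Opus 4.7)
The plan is to reduce the claim to a purely geometric lower bound on $\Vol(B(x,r) \cap A)$, using that $f \ge f_0$ (Lebesgue-)almost everywhere on $A$. More precisely, since $\mu(B(x,r)) = \int_{B(x,r) \cap A} f \, d\Vol \ge f_0 \Vol(B(x,r) \cap A)$, it suffices to exhibit a constant $c_A > 0$, depending only on the polytope $A$, such that
\begin{equation*}
\Vol(B(x,r) \cap A) \ge c_A \, r^d, \qquad \forall\, x \in A,\ r \in (0,1).
\end{equation*}
Once this is established, we take $\epsilon' := f_0 c_A$. Note that $A$ has nonempty interior, since otherwise $\mu(A) \le f_0 \cdot \Vol(A) = 0$, contradicting $\mu(A)=1$ together with $f_0 > 0$.

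Because $A$ is a compact convex body with nonempty interior, we may fix some $z \in A^o$ and $\rho \in (0,1)$ with $B(z,\rho) \subset A$. Set $D := \diam(A)$. For any $x \in A$, by convexity the entire ``spherical cone'' $C_x := \mathrm{conv}(\{x\} \cup B(z,\rho))$ is contained in $A$. The main step is to show that this cone has volume at least $c_A r^d$ inside $B(x,r)$ for every $r \in (0,1)$, where $c_A$ depends only on $\rho$ and $D$.

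For the cone estimate I would split into two cases. If $r \ge D$, then $B(x,r) \supset A$, so $\Vol(B(x,r) \cap A) = \Vol(A) \ge \Vol(A) \cdot r^d / D^d$, and we are done with constant $\Vol(A)/D^d$. If $r < D$, note $\|x-z\| \le D$; truncating $C_x$ at distance $r$ from its apex $x$ (or at the base ball when $r$ exceeds $\|x-z\|+\rho$) produces a right circular cone-like region with apex at $x$, height $\min(r, \|x-z\|)$, and half-opening angle at least $\arcsin(\rho/D)$. An elementary computation, using that $\rho < 1 \le D$ when $\rho \le D$ and bounding the volume of a cone of height $h$ and half-angle $\alpha$ from below by $c_d (h \sin\alpha)^{d-1} \cdot h$ for an explicit dimensional constant $c_d$, yields
\begin{equation*}
\Vol(B(x,r) \cap C_x) \ge c_d \, (\rho/D)^{d-1} \, r^d,
\end{equation*}
at least when $r \le \|x-z\|$; the case $r > \|x-z\|$ is handled by noting that then $B(x,r)$ already contains a fixed fraction of $B(z,\rho)$ scaled appropriately (alternatively, since $r < D$ and $\|x-z\| \le D$, the bound above for $r' := \min(r,\|x-z\|)$ still gives the right order because $r'/r$ is bounded below by $\rho/(2D)$ once one replaces $z$ by a point on $[x,z]$ at distance $r/2$ from $x$ when needed).

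The only mildly delicate point is keeping the cone-volume estimate uniform in $x \in A$; this is where compactness and the fixed inscribed ball $B(z,\rho)$ are used decisively, ensuring the opening angle of $C_x$ stays bounded away from $0$. The rest is bookkeeping of dimensional constants.
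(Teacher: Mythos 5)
Correct, and essentially the same approach as the paper: fix a ball $B_0\subset A$, take for each $x\in A$ the cone $\mathrm{conv}(\{x\}\cup B_0)\subset A$, and observe that since $A$ is bounded the angular aperture of this cone at $x$ is bounded below uniformly in $x$, giving $\Vol(B(x,r)\cap A)\ge c_A r^d$ and hence $\mu(B(x,r))\ge f_0 c_A r^d$. Your write-up spells out the angle bound $\arcsin(\rho/D)$ and the edge cases ($r\ge\diam A$, $x$ near $z$) more explicitly, which the paper leaves implicit, but the underlying idea is identical; note the small slip that nonemptiness of $A^o$ should follow simply from $\mu(A)=\int_A f\,d\Vol=1$ forcing $\Vol(A)>0$, not from the inequality $\mu(A)\le f_0\Vol(A)$.
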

 \begin{proof}
	 Let $B_0$ be a (fixed) ball contained in $A$, and let $b$ denote
	 the radius of $B_0$. For $x \in A$, let 
	 $S_x$ denote the convex hull of $B_0 \cup \{x\}$.
	 Then $S_x \subset A$
	 since $A$ is convex. If $x \notin B_0$, then
	 for $r < b$ the set $B(x,r) \cap S_x$  is the intersection
	 of $B(x,r)$ with a cone having vertex $x$, and since
	 $A$ is bounded the angular volume of this cone is bounded
	 away from zero, uniformly over $x \in A \setminus B_0$.
	 Therefore $r^{-d} \Vol(B(x,r) \cap A)$ is bounded away from zero
	 uniformly over $r \in (0,b)$ and $x \in A \setminus B_0$
	 (and hence over $x \in A$). Since we assume $f_0 >0$,
	 (\ref{e:vol_lowB}) follows.
 \end{proof}

 Recall that $\nu(r,a)$ was defined just before Proposition \ref{gammalem}.
Recall  that for each face $\varphi \in \Phi^*(A)$
we denote the angular  volume of $A$ at $\varphi$ by
$\rho_{\varphi}$, and set $f_\varphi := \inf_{\varphi} f(\cdot)$ (if $\ph \in
\Phi(A)$) or $f_\ph = f_0 $ (if $\ph =A$).

\begin{lemm}
	\label{lemtopelb}
	Let $\varphi \in \Phi^*(A)$. Assume $f|_A$ is continuous at
	$x$ for all $x \in \varphi$.
Then, almost surely:
	\begin{align}
	\liminf_{n \to \infty} \left( n  L^d_{n,k(n)}/ k(n) \right)
	&
	\geq ( \rho_\varphi f_\varphi)^{-1}  ~~~ & {\rm if} ~ \beta = \infty; 
	\label{0704e3}
	\\
\liminf_{n \to \infty} \left( n  L^d_{n,k(n)}/ \log n \right)
		&
	\geq (\rho_\varphi f_{\varphi} )^{-1} \hH_\beta(D(\varphi)/d)  
~~~ & {\rm if} ~ \beta < \infty 
	.
	\label{0704f3}
	\end{align}
\end{lemm}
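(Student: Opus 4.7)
The plan is to deduce both bounds from Proposition \ref{gammalem} by a limiting argument. Fix $\epsilon \in (0,1)$ and set $a_\epsilon := (1+\epsilon)\rho_\varphi f_\varphi$ and $b := D(\varphi)$. Lemma \ref{l:volLB} already supplies the lower volume hypothesis \eqref{e:vol_lowB}, so the task reduces to verifying
\begin{equation*}
\nu(r, a_\epsilon r^d) = \Omega(r^{-D(\varphi)}) \quad \text{as } r \downarrow 0.
\end{equation*}
Given this, Proposition \ref{gammalem} yields $\liminf_{n} n L_{n,k(n)}^d/k(n) \geq 1/a_\epsilon$ almost surely if $\beta = \infty$, and $\liminf_{n} n L_{n,k(n)}^d/\log n \geq a_\epsilon^{-1} \hH_\beta(D(\varphi)/d)$ almost surely if $\beta < \infty$. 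Intersecting over a countable sequence $\epsilon \downarrow 0$ and letting $a_\epsilon \downarrow \rho_\varphi f_\varphi$ then produces \eqref{0704e3} and \eqref{0704f3}.

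For the packing, I will exploit the flatness of $\varphi^o$ together with the local cone structure of $A$. Using $f_\varphi = \inf_\varphi f$ and the assumed continuity of $f|_A$ on $\varphi$, pick $x_0 \in \varphi^o$ (or $x_0 \in A^o$ in the case $\varphi = A$, obtained by interior approximation via continuity) with $f(x_0) \leq f_\varphi + \epsilon/3$, and extract a neighbourhood $V \subset A$ of $x_0$ on which $f \leq f_\varphi + \epsilon/2$. Since the local cone identity $A \cap U_x = (x + \cK_\varphi) \cap U_x$ holds around every $x \in \varphi^o$, for any compact $K \subset \varphi^o \cap V$ sitting at positive distance from $\partial \varphi$ and from the faces of $A$ not containing $\varphi$, there is $r_0 > 0$ with
\begin{equation*}
\Vol(B(x,r) \cap A) = \rho_\varphi r^d \quad \text{for all } x \in K, \ r \leq r_0,
\end{equation*}
which combined with the density bound gives $\mu(B(x,r)) \leq a_\epsilon r^d$. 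Taking $K$ to be a $D(\varphi)$-dimensional patch of $\varphi^o$ around $x_0$ (a $d$-ball inside $A^o$ if $\varphi = A$, the singleton $\{x_0\}$ if $D(\varphi) = 0$), a standard tiling of $K$ by $D(\varphi)$-dimensional cells of diameter $\asymp r$ and selection of one point per cell supplies $\Omega(r^{-D(\varphi)})$ points of $K$ at pairwise Euclidean distance at least $2r$, whose associated radius-$r$ balls are disjoint and satisfy the $\mu$-measure bound. This yields the required lower bound on $\nu$.

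The main technical hurdle is the uniform-in-$x$ cone identity $A \cap B(x,r) = (x + \cK_\varphi) \cap B(x,r)$ for all packing centres and all small $r$ simultaneously. Because $\cK_\varphi$ is constant along $\varphi^o$, the identity holds as soon as $B(x,r)$ avoids every face of $A$ not containing $\varphi$, and the compactness of $K$ keeps the relevant distances bounded below, making the bound on $r$ uniform in $x \in K$. The case $\varphi = A$ is geometrically the easiest ($\cK_\varphi = \R^d$, $\rho_\varphi = \theta_d$), but it is precisely where the lemma's global continuity hypothesis enters: continuity on the compact set $A$ gives $f_0 = \min_A f$, and continuity then propagates values near $f_0$ into $A^o$, yielding the interior point $x_0$ required above.
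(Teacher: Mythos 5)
Your proposal follows essentially the same route as the paper: both reduce to Proposition \ref{gammalem} and verify the packing hypothesis $\nu(r,ar^d)=\Omega(r^{-D(\varphi)})$ by combining continuity of $f$ near $\varphi$ with the local cone identity $A\cap B(x,r)=(x+\cK_\varphi)\cap B(x,r)$ to get $\mu(B(x,r))\le a r^d$, then lattice-packing a $D(\varphi)$-dimensional patch of $\varphi^o$. The paper works with an arbitrary $a>f_\varphi$ (letting $a\downarrow f_\varphi$) and takes $b=0$ when $\beta=\infty$ or $D(\varphi)=0$, whereas you take $b=D(\varphi)$ uniformly and a multiplicative $a_\epsilon$ --- same substance. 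One small slip in your constants: from $f\le f_\varphi+\epsilon/2$ on $V$ you only get $\mu(B(x,r))\le(f_\varphi+\epsilon/2)\rho_\varphi r^d$, which does not imply $\mu(B(x,r))\le a_\epsilon r^d=(1+\epsilon)f_\varphi\rho_\varphi r^d$ unless $f_\varphi\ge 1/2$; since $f_\varphi>0$ (because $f_0>0$ and $f$ is continuous at $\varphi$), this is fixed by shrinking the additive slack to $\epsilon f_\varphi$, or simply redefining $a_\epsilon:=\rho_\varphi(f_\varphi+\epsilon)$ as the paper effectively does.
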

\begin{proof}
	Let $a > f_{\varphi}$.
	Take $x_0 \in \varphi$ 
	such that $f (x_0) <a$. If $D(\varphi) >0$, assume
	also that $x_0 \in \varphi^o$.
	By the assumed continuity of $f|_A$ at $x_0$, for all
	small enough $r >0$ we have
	$\mu(B(x_0,r)) \leq
	a  \rho_\varphi r^d$, so that
	$\nu(r,a \rho_\varphi r^d) = \Omega(1)$ as
	$r \downarrow 0$.
	Hence, by Proposition \ref{gammalem} (taking $b=0$), if  
	$\beta = \infty$ then almost surely   $\liminf_{n \to \infty} n L_{n,k(n)}^d/k(n) \geq 1/(a \rho_\varphi)$, and (\ref{0704e3}) follows.   
	
If $\beta < \infty$ and if $D(\varphi) =0$, then by Proposition \ref{gammalem}
	(with $b=0$), almost surely
	\linebreak
$\liminf_{n \to \infty} (n L_{n,k(n)}^d/\log n)
\geq \hH_\beta(0)/(a \rho_\varphi)$, and hence (\ref{0704f3}) in this case.

	Now suppose $\beta < \infty$ and $D(\varphi)>0$. Take
	$\delta >0 $ such that $f(x) < a$ for all
 $x \in B(x_0, 2 \delta) \cap A$, and such that moreover
	$B(x_0,2 \delta) \cap A = B(x_0,2 \delta) \cap (x_0+
	\cK_\varphi)$ (the cone $\cK_\varphi$ was defined
	in Section \ref{secLLN}).
	Then for all $x \in B(x_0,\delta) \cap \varphi$
	and all $r \in (0,\delta)$,
	we have $\mu(B(x,r)) \leq  a \rho_\varphi   r^d$.

There is a constant $c >0$ such that for small enough $r >0$ we can find at
least $cr^{-D(\varphi)}$ points $x_i \in B(x_0,\delta) \cap \varphi$ that
are all at a distance more than $2 r$ from each other, and therefore
$\nu( r,a \rho_\varphi r^d ) = \Omega( r^{-D(\varphi)})$ 
as $r \downarrow 0 $.  Thus by Proposition \ref{gammalem} we have
$$
	\liminf_{n \to \infty} \left( nL_{n,k(n)}^d/k(n) \right)
	\geq  ( a \rho_\varphi)^{-1} \hH_{\beta}(D(\varphi)/d),
$$
	almost surely, and (\ref{0704f3}) follows.
\end{proof}

 If we assumed $f|_A$ to be continuous  on all of $A$,
 we would not need the next lemma because we could instead use Lemma
 \ref{lemtopelb} for $\varphi =A$ as well as for lower-dimensional faces.
 However, in  Theorem \ref{thmpolytope}
 we make the weaker assumption that $f|_A$ is continuous at $x$ only
 for $x \in \partial A$. In this situation,
 we also require the following lemma to deal with $\ph = A$.

%\begin{lemm}
%\label{packlem}
%%Suppose  $B$ is Riemann measurable with $\mu(B) >0$.
%Let $\alpha > f_0$.
%	Then $\liminf_{r \downarrow 0} r^d \nu(r, \alpha \theta_d r^d) > 0$.
%\end{lemm}
%\begin{proof}
%	This is \cite[Lemma 6.4]{CovPaper}, taking $B=A$ here.
%\end{proof}

 %Recall now that we assume $k(n)$ satisfy (\ref{kcond}).
\begin{lemm} 
\label{lemliminf}
	%Suppose that $B$ is compact and Riemann measurable with $\mu(B) >0$,
	%and either $B \subset A^o$ or $B=A$.
	It is the case that
	\begin{align}
		\Pr[ \liminf(n  L_{n,k(n)}^d/k(n) ) & \geq 1
		/(\theta_d f_0)]=1
	%	~~~~~
		&{\rm if} ~\beta = \infty;
\label{0328a}
\\
 \Pr[  \liminf_{n\to\infty} 
	%( n \theta_d \tL_{n,k(n)}^d /\log n) \geq \hH_\beta(1)/f_0  ] =1.
		( n  L_{n,k(n)}^d /\log n) & \geq \hH_\beta(1)/(\theta_df_0)] =1
	%	~~~~~
		&{\rm if} ~\beta < \infty.
\label{liminf1}
	\end{align}
\end{lemm}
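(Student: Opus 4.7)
My plan is to deduce this lemma from Proposition \ref{gammalem} applied with exponent $b=d$. I cannot simply invoke Lemma \ref{lemtopelb} at $\varphi = A$ because the hypothesis of the theorem allows $f|_A$ to behave arbitrarily at interior points (continuity is assumed only at $x \in \partial A$); so I need a new mechanism to produce many balls of small $\mu$-measure. The key observation will be that Lebesgue differentiation still controls $\mu(B(x,r))$ in terms of $f(x)$ for almost every $x$, and Egorov's theorem will turn this into uniform control on a set of positive Lebesgue measure.

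First I fix $a > f_0$. Since $\essinf_A f = f_0$ and $\partial A$ has Lebesgue measure zero, the set $E := A^o \cap \{f < a\}$ has positive Lebesgue measure. By the Lebesgue differentiation theorem, $\mu(B(x,r))/(\theta_d r^d) \to f(x)$ as $r \downarrow 0$ for almost every $x$. Applying Egorov's theorem on $E$ to the sequence $x \mapsto \mu(B(x,1/n))/(\theta_d n^{-d})$ will produce a measurable set $F \subset E$ with $\Vol(F)>0$ on which this sequence converges uniformly to $f$. Using monotonicity of $r \mapsto \mu(B(x,r))$ together with $((n+1)/n)^d \to 1$, uniform convergence along the subsequence $r = 1/n$ upgrades to uniform convergence as $r \downarrow 0$. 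Hence I may pick $r_0 > 0$ such that $\mu(B(x,r)) \leq a \theta_d r^d$ for every $x \in F$ and every $r \in (0,r_0]$.

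Next, for each $r \in (0,r_0]$, I choose a maximal $2r$-separated subset $S_r$ of $F$. By maximality, $F$ is covered by the balls $B(x,2r)$, $x \in S_r$, so $\Vol(F) \leq |S_r| \cdot 2^d \theta_d r^d$ and $|S_r| = \Omega(r^{-d})$. The balls $B(x,r)$, $x \in S_r$, are pairwise disjoint, centred in $A$, and each has $\mu$-measure at most $a\theta_d r^d$, so $\nu(r, a\theta_d r^d) = \Omega(r^{-d})$. Combined with Lemma \ref{l:volLB} (which furnishes \eqref{e:vol_lowB}), Proposition \ref{gammalem} then yields $\liminf (nL_{n,k(n)}^d/k(n)) \geq 1/(a\theta_d)$ when $\beta = \infty$ and $\liminf (nL_{n,k(n)}^d/\log n) \geq \hH_\beta(1)/(a\theta_d)$ when $\beta < \infty$, both almost surely. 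Sending $a \downarrow f_0$ along a countable sequence will yield the two claimed inequalities.

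The main obstacle is the Egorov step: it must produce a set of positive Lebesgue measure on which $\mu(B(\cdot,r))$ is controlled uniformly over \emph{all} small $r$ (not just a single $r$ or a single $x$). This uniform control, obtained without any continuity assumption on $f$ in the interior, is exactly what upgrades the packing number from the trivial $\Omega(1)$ (which would only recover $\beta/(a\theta_d)$) to the $\Omega(r^{-d})$ bound needed to reach $\hH_\beta(1)/(a\theta_d)$; everything else in the argument is routine.
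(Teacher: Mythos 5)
Your proposal is correct in substance but takes a genuinely different (and more self-contained) route than the paper. The paper dispatches this lemma in two lines by taking $B=A$ in \cite[Lemma 6.4]{CovPaper}, which directly asserts $\liminf_{r\downarrow 0} r^d \nu(r,\alpha\theta_d r^d)>0$ for $\alpha>f_0$; it then feeds this into Proposition \ref{gammalem} with $b=d$ and lets $\alpha\downarrow f_0$, exactly as you do at the end. What you have done instead is to \emph{reprove} the needed packing estimate from scratch via Lebesgue differentiation and Egorov's theorem, rather than cite it. Your mechanism is sound: since $\essinf_A f = f_0<a$ and $\Vol(\partial A)=0$, the set $E=A^o\cap\{f<a\}$ has positive Lebesgue measure; Lebesgue differentiation gives $\mu(B(x,r))/(\theta_d r^d)\to f(x)$ a.e.; Egorov on the sequence $r=1/n$ followed by the monotonicity/sandwiching step upgrades to uniform control over all small $r$ on a set $F$ of positive measure; and a maximal-separated-set packing of $F$ then yields $\nu(r,a\theta_d r^d)=\Omega(r^{-d})$. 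Combined with Lemma \ref{l:volLB} and Proposition \ref{gammalem}, this gives both cases. What your approach buys is self-containment (no external citation), at the cost of invoking Egorov; the paper's route is shorter because it outsources precisely this measure-theoretic work.

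One small wrinkle, easily repaired: Egorov gives you $\sup_{x\in F}|g_n(x)-f(x)|\to 0$, and on $F\subset E$ you only know $f<a$, not $f\leq a-\eps$ for some fixed $\eps>0$. So you cannot directly conclude $g_n(x)\leq a$ for all $x\in F$ and $n$ large. The fix is to start from $E':=A^o\cap\{f<a'\}$ for some intermediate $a'\in(f_0,a)$ (such a set still has positive measure since $a'>\essinf f$), run Egorov there, and take $n$ large enough that the uniform error is below $a-a'$; alternatively, simply observe that your argument delivers $\nu(r,(a+\eps)\theta_d r^d)=\Omega(r^{-d})$ for every $\eps>0$, which is equally sufficient once you send $a\downarrow f_0$. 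Either patch is a one-line adjustment and does not affect the validity of the approach.
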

\begin{proof} 
Let $ \alpha > f_0$. Then
	by taking $B=A$ in 
	 \cite[Lemma 6.4]{CovPaper}, 
	\bea
	\label{e:packlem}
	\liminf_{r \downarrow 0} 
	r^d \nu(r, \alpha \theta_d r^d) > 0.
	\eea
	Set $r_n := (k(n)/(n \theta_d \alpha))^{1/d}$ if
$\beta = \infty$, and set
%\linebreak
$r_n:= (\hH_\beta(1) (\log n)/(n \theta_d \alpha))^{1/d}$ if $\beta < \infty$. 
%
%	Assume for now that $B$ is compact with $B \subset A^o$, so that there exists $\delta >0$ such that $B \subset A^{(\delta)}$.  Then, even if $f$ is not continuous on $A$, we can find $x_0 \in B$ with $f(x_0) < \alpha$, such that $x_0$ is a Lebesgue point of $f$.  Then for all small enough $r>0$ we have $\mu(B(x_0,r)) < \alpha \theta_d r^d$, so that $\nu(B,r,\alpha \theta r^d) = \Omega(1)$ as $r \downarrow 0$.
%

	If $\beta = \infty$, then by (\ref{e:packlem}) we
	can apply Proposition \ref{gammalem} (taking $a= \alpha \theta_d$
	and $b=0$) to deduce that 
	$\liminf_{n \to \infty} nL_{n,k(n)}^d/k(n) \geq (\theta_d \alpha)^{-1}$, almost surely,
	%Hence for all large enough $n$ we have $L_{n,k(n)} > r_n$;
	%provided  $n$ is also large enough  so that $r_n < \delta$ we also have $\tL_{n,k(n)} > r_n$, 
	and (\ref{0328a}) follows.

	Suppose instead that $\beta < \infty$.
	By (\ref{e:packlem}),
	$\nu(r, \alpha \theta_d r^{d}) = \Omega(r^{-d})$ as
	$r \downarrow 0$.
	Hence by Proposition \ref{gammalem}, almost surely
	$\liminf_{n \to \infty}
	\left( n L_{n,k(n)}^d /\log n \right) \geq (\alpha \theta_d)^{-1} 
	\hH_\beta(1)$. The result follows by letting $\alpha\downarrow f_0$. % and hence for large enough $n$ we have
	%$L_{n,k(n)} >  r_n $ 
	%and also 
	%$\tL_{n,k(n)} >  r_n $, 
	%which
	%yields (\ref{liminf1}). 
%
	%Finally, suppose instead that $B=A$.  Then by using e.g. \cite[Lemma 11.12]{RGG} we can find compact, Riemann measurable $B' \subset A^o$ with $\mu(B') >0$ and $\essinf_{x \in B'} f(x) < \alpha$.  Define $S_{n,k}$ to be the smallest $r \geq 0$ such that every point in $B'$ is covered at least $k$ times by balls of radius $r$ centred on points of $\X_n$.  By the argument already given we have almost surely for all large enough $n$ that $S_{n,k(n)} > r_n $ and also $B' \subset A^{(r_n)}$.  For such $n$, there exists $x \in B' \subset B \cap A^{(r_n)}$ with $\X_n(B(x,r_n)) < k(n)$, and hence by (\ref{Fnequiv}), $\tL_{n,k(n)} > r_n$, which gives us (\ref{0328a}) and (\ref{liminf1}) in this case too.
	%\qed
\end{proof}

%\begin{proof}[Proof of Proposition \ref{thm1}] Under either hypothesis ((i) or (ii)), it is immediate from Lemmas  \ref{lemliminf} and \ref{lemlimsup} that (\ref{0617a}) holds if $\beta = \infty$ and (\ref{0315a}) holds if $\beta < \infty$.  

%It follows that almost surely $\tL_{n,k(n)} \to 0$ as $n \to \infty$, and therefore if we are in Case (i) (with $B \subset A^o$) we have $\tL_{n,k(n)} = L_{n,k(n)}$ for all large enough $n$.  Therefore in this case (\ref{0617a}) (if $\beta =\infty$) or (\ref{0315a}) (if $\beta < \infty)$ still holds with $\tL_{n,k(n)}$ replaced by $L_{n,k(n)}$.  \end{proof}

%\subsection{{\bf Proof of  Theorem \ref{thm2}}}
%\label{secproofstrong}
%\allco

%Given any $x \in \R^d$ and nonempty $S \subset \R^d$
%we set  $\dist(x,S):= \inf_{y \in S} \|x-y\|$.

\begin{proof}[Proof of  Theorem \ref{thmpolytope}] 
	%Suppose $\beta = \infty$. We obtain from
	%Lemmas \ref{k
	%Let $u > \max \left( \frac{1}{\theta_d f_0}, \max_{\varphi \in \Phi(A)} \frac{1}{f_\varphi \rho_\varphi} \right)$.  Setting $r_n = (u k(n)/n)^{1/d}$, we have from Lemma \ref{lemedge2} that  $G_{n,k(n),r_n,\varphi}$ occurs only finitely often, a.s.,  for each $\varphi \in \Phi(A)$.  Hence by Lemma \ref{Flem}, $L_{n,k(n),1} \leq r_n$ for all large enough $n$, a.s. Hence, almost surely $ \limsup_{n \to \infty}  \left( n L_{n,k(n),1}^d/k(n) \right) \leq u.  $
%
%	Since $u > 1/(\theta_d f_0)$, by Proposition \ref{thm1} we also have $\limsup_{n \to \infty}  \left( n \tL_{n,k(n)}^d/k(n) \right) \leq u$, almost surely,    and hence by (\ref{mineq2}), almost surely 
	%$$
	%\limsup_{n\to \infty} \left( n L_{n,k(n)}^d/k(n) \right) \leq \max \left( (\theta_d  f_0)^{-1}, \max_{\varphi \in \Phi(A)} 1/(f_\varphi \rho_\varphi) \right).
	%$$
	%Moreover, by Lemma \ref{lemtopelb}, Proposition \ref{thm1} and (\ref{mineq2}) we also have that
	%$$
	%\liminf_{n\to \infty} \left( n L_{n,k(n)}^d/k(n) \right) \geq \max \left( (\theta_d f_0)^{-1}, \max_{\varphi \in \Phi(A)} 1/(f_\varphi \rho_\varphi) \right),
	%$$
	%and thus (\ref{0717a}).

	First suppose $\beta < \infty$.
	%Let $u > \max \left( \hH_\beta(1)/ (f_0\theta_d), \max_{\varphi \in \Phi(A)} \hH_\beta(D(\varphi)/d)/(f_\varphi \rho_\varphi) \right)$.  Set $r_n := (u (\log n)/n)^{1/d}$.  Given $\varphi \in \Phi(A)$, by Lemma \ref{lemedge2}  there exists $\eps >0$ such that, setting $k'(n):= \lfloor(\beta + \eps) \log n \rfloor$, we have $\Pr[G_{n,k'(n),r_n,\varphi} ] = O(n^{-\eps})$.  Hence  by Lemma \ref{Flem} and the union bound, 
	%$$
	%\Pr[ n L_{n,k'(n),1}^d/ \log n > u] = \Pr[L_{n,k'(n),1} > r_n] = O(n^{- \eps}).
	%$$
	%Thus by the subsequence trick (Lemma \ref{lemtrick} (a)), $ \limsup_{n \to \infty} \left( n L_{n,k(n),1}^d/ \log n \right) \leq u, $ almost surely.  Since $u > \hH_\beta(1)/ (f_0\theta_d)$,  and we take $B=A$ here, by Proposition \ref{thm1} we also have a.s. that $\limsup_{n \to \infty}  \left( n \tL_{n,k(n)}^d/\log n \right) \leq u$, and hence by (\ref{mineq2}), almost surely 
%
	It is clear from (\ref{Rnkdef}) and (\ref{oldRnkdef}) that
	$L_{n,k} \leq R_{n,k+1}$ for all $n,k$.  Also by
	(\ref{kcond}) we have $(k(n)+1)/\log n \to \beta$ as $n \to \infty$.
	Therefore using 
	\cite[Theorem 4.2]{CovPaper} 
	for the second
	inequality below, we obtain almost surely that
	\bea
	\limsup_{n\to \infty} \left( \frac{n L_{n,k(n)}^d}{\log n} \right)
	\leq 
	\limsup_{n\to \infty} \left( \frac{n R_{n,k(n)+1}^d}{
		\log n} \right)
	%\nonumber \\
	\leq 
%	\max \left( \frac{
%		\hH_\beta(1) }{  f_0\theta_d } 
%	 ,
	\max_{\varphi \in \Phi^*(A)} \left(
	\frac{ \hH_\beta(D(\varphi)/d)}{ f_\varphi \rho_\varphi}
%	\right)
	\right).  
\label{e:Lupper}
	\eea
Alternatively, this upper bound could be derived
using (\ref{e:LleM}) and the asymptotic upper bound on $M_n$
that we shall derive in the next section for the proof of Theorem
\ref{t:M}. 

By Lemmas \ref{lemliminf} and \ref{lemtopelb},
	we have a.s. that
	\bea
	\liminf_{n\to \infty} \left( n L_{n,k(n)}^d/\log n \right) \geq 
	%\max \left( \frac{\hH_\beta(1)}{\theta_d f_0}, 
	\max_{\varphi \in \Phi^*(A)} \left( \frac{ \hH_\beta(D(\varphi)/d) }{
 f_\varphi \rho_\varphi } \right),
 %\right),
\label{e:Llower}
	\eea
	and combining this with (\ref{e:Lupper}) yields (\ref{0717b}).

	Now suppose $\beta = \infty$. In this case,
	again using the inequality $L_{n,k} \leq R_{n,k+1}$ and
	\cite[Theorem 4.2]{CovPaper}, we obtain instead of   
(\ref{e:Lupper}) that a.s.
	\bea
	\limsup_{n\to \infty} \left( n L_{n,k(n)}^d/k(n) \right)
	%\leq 
	%\limsup_{n\to \infty} \left( n R_{n,k(n)+1}^d/\log n \right)
	%\nonumber \\
	\leq 
	%\max \left( \frac{ 1 }{  f_0\theta_d } ,
	\max_{\varphi \in \Phi^*(A)} \left(
	\frac{ 1}{ f_\varphi \rho_\varphi}
	%\right)
	\right).
	\label{e:upinf}
	\eea
Also by Lemmas \ref{lemliminf} and \ref{lemtopelb},
instead of (\ref{e:Llower}) 
	we have a.s. that
	\bean
	\liminf_{n\to \infty} \left( n L_{n,k(n)}^d/k(n) \right) \geq 
	%\max \left( \frac{1}{f_0 \theta_d }, 
	\max_{\varphi \in \Phi^*(A)} \left( \frac{ 1 }{
 f_\varphi \rho_\varphi } 
 %\right)
 \right),
	\eean
	and combining this with (\ref{e:upinf}) yields (\ref{0717a}).
\end{proof}

\subsection{A general upper bound}
%[{\bf comments on the assumption: 1. upper and lower bound on volume estimate is replaced by a measure $\mu_*$ doubling condition on the metric space (this addresses Mathew's long comment from the previous version ). 2 the lower bound on $\mu$ measure  of balls  is implied by condition on $\mu(G_r\setminus G)$ thus is removed.}]

In this subsection we present an asymptotic upper bound for $M_{n,k(n)}$.
As we did for the lower bound in Section \ref{subsecgenbds}, we
shall give our result (Proposition \ref{p:up} below) in a more general setting; 
we assume that $A$ is a general metric space endowed with two Borel measures
$ \mu$ and $\mu_*$ (possibly the same  measure, possibly not). 
Assume that $\mu$ is a probability measure and that $\mu_*$ is a
%measure $\mu$, and also with a Borel 
{\em doubling measure}, meaning that there is a constant $c_*$
(called a {\em doubling constant} for $\mu_*$)
such that
$\mu_*(B(x,2r))\le c_* \mu_*(B(x,r))$ 
for all $x \in A$ and $r >0$.
We shall require  further conditions on $A$: an ordering
condition (O), a condition on balls (B),
a  topological condition (T) and a geometrical condition (G) as follows:
\begin{itemize}
\item[(O)] There is a total ordering of the elements of $A$.
\item[(B)] For all $x \in A$ and $r>0$, the ball 
	%$B(x,r):= \{y \in A: \dist(x,y) \leq r\}$ is connected.
	$B(x,r)$ is connected.
\item[(T)] The space $A$ is unicoherent (see \cite[Section 9.1]{RGG}), and
also connected.
\item[(G)]
There exists $\delta_1>0$, and $K_0 \in(1,\infty)$,
 such that for all $r< \delta_1$ and any $x\in A$, the number of components of  $A\setminus B(x,r)$ is at most two,
 and if there are two components, at least one of these components
has diameter at most $K_0 r$.
\end{itemize}
Given $D \subset A$ and $r >0$, we write $D_r$ for $\{y \in A: \dist(y,D) \leq r\}$.
Also, let $\kappa(D,r)$ be the $r$-covering number of
 $D$, that is, the minimal $m\in \N$ such that $D$ can be
 covered by $m$ balls centred in $D$ with radius $r$.

 As before, given $\mu$ we assume $X_1,X_2,\ldots$ to  be independent
 $\mu$-distributed random elements of $A$ with the $k$-connectivity
 threshold $M_{n,k}$ defined to be the minimal $r$ such that
 $G(\X_n,r)$ is $k$-connected, with $\X_n := \{X_1,\ldots,X_n\}$.
  
\begin{prop}[General upper bound]
\label{p:up}
Suppose that $(A,\mu,\mu_*)$ are as described above and $A$
	satisfies conditions (O), (B), (T), (G).  
	Let $\ell\in \N$ and let $d >0$. For each $j \in [\ell]$ let
 $a_j >0,   b_j\ge 0$. Suppose that for each $K \in \N$,
	there exists $r_0(K)>0$ such that for 
	 all $r\in (0,r_0(K))$,  there is a partition
	$\{T(j,K,r), j\in[\ell]\}$ of $A$ with the following two properties.
	Firstly for  each fixed $K \in \N$, $j\in[\ell]$, we have
	\begin{align}
		\kappa(T(j,K,r),r)=O(r^{-b_j}) ~~~{\rm as} ~~ r\downarrow 0,
		\label{1231a}
	\end{align}
and secondly, for all $K \in \N,$ $ j \in [\ell]$,  $r \in (0,r_0(K))$  and
	any $G\subset A$ intersecting $T(j,K,r)$ with 
$\diam(G)\le K r$, we have 
\begin{align}\label{0923}
\mu(G_r\setminus G) \ge a_j r^d.
\end{align}
Assume \eqref{kcond}. 
	Then, almost surely, 
	\begin{align*}
		\limsup_{n\to\infty} \big( n M^d_{n,k(n)}/k(n) \big) &\le 
		\max_{j \in [\ell]} (a_j^{-1})
		%~~~~~~~~~
	 &{\rm if} ~\beta = \infty;
		\\
		\limsup_{n\to\infty} \big( n M_{n,k(n)}^d/\log n \big)& \le \max_{j\in[\ell]}
		(a_j^{-1} \hat H_\beta(b_j/d)) 
		%~~~~~~~~~~
		 &{\rm if} ~\beta<\infty.
		\end{align*}
\end{prop}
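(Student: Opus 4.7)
The plan is to prove the upper bound via Borel--Cantelli. For any $c$ strictly exceeding the asserted right-hand side, I aim to show that $\Pr[M_{n,k(n)} > r_n]$ is summable along a geometric subsequence $n_i = \lceil (1+\varepsilon)^i \rceil$, with $r_n^d := c(\log n)/n$ (finite-$\beta$ case) or $r_n^d := c k(n)/n$ ($\beta=\infty$ case).  Interpolation between consecutive $n_i$, using the fact that $G(\X_n,r)$ gains edges as $r$ grows, then transfers the conclusion to all $n$.

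By Menger's theorem, $M_{n,k(n)} > r_n$ holds iff there is a vertex cut $S \subset \X_n$ with $|S| \le k(n)-1$ separating $G(\X_n, r_n)$.  Using the total order (O) I select, canonically, a component $G$ of $G(\X_n,r_n)\setminus S$ (e.g.\ the component with fewest vertices, breaking ties by the minimum-order vertex); then $G$ is non-empty, proper and connected in $G(\X_n,r_n)$, and $\X_n(G_{r_n}\setminus G) \le k(n)-1$.  I fix a large constant $K \in \N$ and split according to $\diam(G) \le Kr_n$ (small case) vs.\ $\diam(G) > Kr_n$ (large case).

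In the small case, suppose additionally $G \cap T(j,K,r_n) \ne \emptyset$.  Then $G \subset B(y_i,(K+1)r_n)$ for some $y_i$ in an $r_n$-cover of $T(j,K,r_n)$ of size $N_j = O(r_n^{-b_j})$ by \eqref{1231a}.  By \eqref{0923}, $\mu(G_{r_n}\setminus G) \ge a_j r_n^d$, and since $G_{r_n} \subset B(y_i,(K+2)r_n)$, the event $\{\X_n(G_{r_n}\setminus G) \le k(n)-1\}$ depends only on $\X_n \cap B(y_i,(K+2)r_n)$.  Applying Lemma \ref{lemChern}(b) together with a union bound over the $2^{\X_n(B(y_i,(K+1)r_n))}$ candidate realizations of $G$ inside the inner ball (this combinatorial factor being $n^{o(1)}$ off a summable exceptional event, via the doubling property of $\mu_*$ and Lemma \ref{lemChern}(c)) gives
\bean
\Pr[\exists \text{ small bad } G \text{ hitting } T(j,K,r_n)] \le O(r_n^{-b_j})\cdot n^{o(1)}\cdot\exp\Bigl(-a_j n r_n^d\, H\bigl(k(n)/(a_j n r_n^d)\bigr)\Bigr).
\eean
The defining identity $y H(\beta/y) = b_j/d$ at $y = \hH_\beta(b_j/d)$, together with the monotonicity of $y \mapsto y H(\beta/y)$ on $[\beta,\infty)$, makes this summable in $n$ as soon as $c > \max_j a_j^{-1}\hH_\beta(b_j/d)$; the $\beta = \infty$ case is analogous, using $k(n)/\log n \to \infty$.

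The principal obstacle is the large-diameter case.  Here I would leverage conditions (B), (T), (G) to establish that, whenever $\diam(G) > K_0 r_n$ with $K_0$ as in (G), $\mu(G_{r_n}\setminus G) \ge c_1(\diam(G)/r_n)\cdot r_n^d$ for some $c_1 > 0$: condition (B) (connectedness of balls) supports a chaining argument along overlapping balls covering a near-geodesic inside $G$; unicoherence combined with connectedness of $A$ (condition (T)) ensures the resulting $r_n$-annular chain bounds a connected region of non-negligible $\mu$-measure at each link; and condition (G) rules out degenerate separations in which both $G$ and $A\setminus G$ are simultaneously large, so that the canonical side selected via (O) is automatically small once $K \ge K_0$.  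A further Chernoff bound, union-bounded over an $O(r_n^{-d})$ cover of $A$, then renders the large-case probability summable for $K$ sufficiently large.  This geometric/topological step, which adapts the arguments of \cite[Chapter 13]{RGG} from the Euclidean to the abstract metric setting of the proposition, is where the main delicacy lies.
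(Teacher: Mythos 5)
Your overall structure (separating pair/cut decomposition, small-diameter versus large-diameter dichotomy, Borel--Cantelli along a subsequence) matches the paper's. But there are genuine gaps in both halves.

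In the small-diameter case, you propose to union-bound over the $2^{\X_n(B(y_i,(K+1)r_n))}$ candidate realizations of $G$, asserting this factor is $n^{o(1)}$ off a summable exceptional event. That is not so: with $n r_n^d \asymp \log n$, the number of points of $\X_n$ in a ball of radius $\Theta(r_n)$ is typically $\Theta(\log n)$, so $2^{\X_n(B)}$ is typically $n^{\Theta(1)}$ with a strictly positive exponent that grows with $K$. Since $c$ is taken just barely above $\max_j a_j^{-1}\hH_\beta(b_j/d)$, the Chernoff exponent only just beats $r_n^{-b_j}=O(n^{b_j/d})$, and a further polynomial factor of uncontrolled exponent ruins summability. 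The paper avoids this by never union-bounding over actual subsets of $\X_n$: it replaces the separating set $U$ by its \emph{pixel version} $\sigma(U) := \cL_n\cap U_{\eps r_n}$, where $\cL_n$ is a maximal $\eps r_n/5$-separated net (Lemma \ref{l:cov}), shows the event $\{\X_n(U_{r_n}\setminus U)<k(n)\}$ is implied by the analogous event for a fixed annulus $D_{\sigma(U),n}$ determined by the pixel shape alone, and observes that the number of possible pixel shapes of diameter $\le 2Kr_n$ hitting $T(j,K,r_n)$ is only $O(r_n^{-b_j})$ because $\cL_n$ has bounded local density.

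In the large-diameter case, your reading of condition (G) is incorrect. Condition (G) controls the components of $A\setminus B(x,r)$ for a \emph{ball}; it certainly does not preclude vertex cuts $S\subset\X_n$ for which both components of $G(\X_n,r_n)\setminus S$ are large (a slab through the middle of a cube is the standard example), and the total ordering (O) has no bearing on which side is small---it is used only for the canonical enumeration in the Peierls counting argument of Lemma \ref{l:pa}. The paper's actual use of (G) is in establishing inequality \eqref{0922b}: together with unicoherence (T), it shows that the separating boundary $\partial_W U$ between the two sides has diameter $\gtrsim \min(\delta_1,\diam U,\diam W)$; since the fattened pixelization $D_WU$ of this boundary must then have cardinality $q\gtrsim K$, and $\X_n$ must avoid $(D_WU)_{\eps r_n/5}$ which has $\mu$-measure $\gtrsim q\, r_n^d$, one obtains an exponentially small probability that is union-bounded over connected pixel shapes of size $q$ via the Peierls lemma. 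Your sketch of a ``chaining argument along a near-geodesic'' does not engage with these ingredients and does not produce the needed lower bound on the separating boundary's measure, nor the entropy bound on shapes. In short: the paper's discretization to $\cL_n$ is not a cosmetic device but is exactly what controls both the small-case combinatorics and the large-case Peierls count, and your proposal is missing it.
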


Later we shall use Proposition \ref{p:up} 
in the case where $A$ is a convex polytope in $\R^d$
to prove Theorem \ref{t:M}, taking  $\mu$ to be the measure
with density $f$ and taking $\mu_*$ to be the restriction of
Lebesgue measure to $A$
(in fact, if $f$ is bounded above then we could take $\mu_*= \mu$ 
instead).
The sets in the partition each represent a  region near to a particular
face   $ \ph \in \Phi^*(A)$ (if $\ph= A $
the corresponding set in the partition is
 an interior region).
In this case, coefficients $a_j$ in 
 the measure lower bound \eqref{0923} depend heavily on the geometry of the determining cone near a particular face.

 As a first step towards proving Proposition \ref{p:up},
we spell out some useful consequences of the measure doubling property.  
In this result (and again later) we use $|\cdot|$ to denote
the cardinality (number of elements) of a set.

 \begin{lemm}
 \label{l:cov}
 Let $\mu_*$ be a doubling measure on the metric space $A$, with
	 doubling constant $c_*$.  We have the following. 

	 (i) For any $\epsilon\in(0,1)$, there exists $\rho(\eps)\in\N$
	 such that  $\kappa(B(x,r), \epsilon r) \le \rho(\eps)$ for all
	 $x\in A, r\in (0,\infty)$.
 
 (ii)
	 For all $r \in (0,1)$ and
	 all $D \subset A$,
	 %with $\diam(D) < \infty$,
	 we can find $\cL \subset D$ with $|\cL| \leq \kappa(D,r/5)$,
	 such that $D \subset \cup_{x \in \cL} B(x,r)$, and moreover
	  the balls $B(x,r/5)$, $x \in \cL$, are disjoint.
 \end{lemm}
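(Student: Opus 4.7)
The plan is to establish both parts via standard metric-measure arguments: part (i) from the doubling property of $\mu_*$ via a packing/covering comparison, and part (ii) from a Vitali-style maximal disjoint ball construction.

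For part (i), fix $x \in A$ and $r > 0$, and construct greedily a maximal sequence $(x_i)$ in $B(x,r)$ with pairwise distances strictly greater than $\eps r$. Maximality gives $B(x,r) \subset \bigcup_i B(x_i, \eps r)$, so $\kappa(B(x,r), \eps r)$ is at most the number $M$ of such points. The key observation is that the closed balls $B(x_i, \eps r/2)$ are then pairwise disjoint and all lie inside $B(x, 2r)$, while $B(x,r) \subset B(x_i, 2r)$ for each $i$ (because $x_i \in B(x,r)$). Iterating the doubling inequality $j := \lceil \log_2(4/\eps) \rceil$ times gives $\mu_*(B(x_i, 2r)) \leq c_*^j \mu_*(B(x_i, \eps r/2))$, so $\mu_*(B(x_i, \eps r/2)) \geq c_*^{-j} \mu_*(B(x,r))$. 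Summing over $i$ and bounding the total by $\mu_*(B(x,2r)) \leq c_* \mu_*(B(x,r))$ forces $M \leq c_*^{j+1}$, which is the desired $\rho(\eps)$, independent of $x$ and $r$.

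For part (ii), let $\cL \subset D$ be a set constructed greedily to be maximal with respect to the property that the closed balls $\{B(x, r/5) : x \in \cL\}$ are pairwise disjoint. Maximality ensures that for every $y \in D$ there exists $x \in \cL$ with $B(y, r/5) \cap B(x, r/5) \neq \emptyset$, whence $\|y - x\| \leq 2r/5 \leq r$, so $D \subset \bigcup_{x \in \cL} B(x, r)$. For the cardinality bound, any two distinct points of $\cL$ are at distance strictly greater than $2r/5$, so any single closed ball of radius $r/5$ contains at most one element of $\cL$; applied to the balls of an optimal cover of $D$ by balls of radius $r/5$ centred in $D$, this yields $|\cL| \leq \kappa(D, r/5)$ (and in particular guarantees the greedy construction terminates whenever this covering number is finite).

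Neither step presents a real obstacle; the only point requiring care is the choice of radii in (i), ensuring the disjoint balls $B(x_i, \eps r/2)$ fit inside $B(x, 2r)$ while remaining reachable from $B(x_i, 2r)$ by a bounded number of doubling iterations. This is routine bookkeeping with the doubling constant $c_*$.
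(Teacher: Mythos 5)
Your proof is correct and takes essentially the same approach as the paper: a Vitali-type packing argument combined with iterated doubling for (i), and a maximal disjoint ball family for (ii). The paper's proof invokes the Vitali covering lemma explicitly and uses packing radius $\eps r/5$ (respectively $r/5$) where you use $\eps r/2$ via a greedy maximal construction, but these are cosmetic variations of the identical idea.
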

\begin{proof}
To prove (i), let $x \in A, r >0$.  By the  Vitali covering lemma, we can find
a set $\mathcal U\subset B(x,r)$ such that balls $B(y,\epsilon r/5),
y\in \mathcal U$ are disjoint and that 
$ B(x,r) \subset \cup_{y\in \mathcal U} B(y,\epsilon r) $.
Set $\rho(\eps) := \lceil c_*^{\lceil \log_2 (15/\epsilon) \rceil}\rceil$.
Then by using the doubling property of $\mu_*$ repeatedly, we
have $\mu_*(B(y,3r)) \leq \rho(\eps) \mu_*(B(y,r/5))$ for all
	$y \in A$. 
	Moreover $B(x,2r)\subset B(y,3r)$ for all $y\in \mathcal U$.
	Also $\cup_{y\in \mathcal U} B(y,\eps r/5) \subset B(x, 2r)$
	and the union is disjoint.  Thus
\begin{align*}
	| \mathcal U| \mu_*(B(x,2r)) \leq
	\sum_{y\in \mathcal U} \mu_*(B(y,3r)) \le \rho(\eps) \sum_{y\in\mathcal U} \mu_*(B(y,\epsilon r/5)) \le \rho(\eps) \mu_*(B(x,2r)),
\end{align*}
	and therefore $|\mathcal U|\le \rho(\eps)$; the claim about $\kappa(B(x,r), \epsilon r)$ follows. 

	Now we prove (ii).
	%By the assumption $\diam(D) < \infty$, and Part (i),
	%we have $\kappa(D,r/3) < \infty$.
Let $\cL^0 \subset D$ with  $|\cL^0| = \kappa(D,r/5)$ and with
	$B \subset \cup_{x \in \cL} B(x,r/5)$. 
 By the Vitali covering lemma, we can find $\cL \subset \cL^0$
	such that $D\subset \cup_{x \in \cL} B(x,r)$
	and the balls $B(x,r/5), x \in \cL,$ are disjoint, and  (ii) follows.
\end{proof}

 Given countable $\sigma \subset A$,
$r >0$ and $k \in \N$,
%
%Given a countable subset $\mathcal L$ of $A$,
 %we say that $\sigma\subset \mathcal L$ is $r$-connected if $\sigma\oplus B(o,r/2)$ is connected subset of $A$, or equivalently, $G(\sigma, r)$ is connected.
 we say that $\sigma$ is $(r,k)$-connected if 
 the geometric graph $G(\sigma, r)$ is $k$-connected. Assuming 
  condition (B) holds, we see that $\sigma$ is $(r,1)$ connected if and only
if $\sigma_{r/2}$ is a
 connected  subset of $A$.

\begin{lemm}[Peierls argument]  
	Assume (O).
	Let $\ell \in \N$, $a \in [1,\infty) $.
Let $r \in (0,1/a)$ and $n \in \N$.  Let $\mathcal L \subset A$
	  with the property that $|\cL \cap B(x,r)| \leq \ell$
	  for all $x \in A$, and let $x_0 \in \cL_r$.
	  Then the number of $(ar,1)$-connected subsets of
 $\mathcal L$ containing  $x_0$ with cardinality $n$ is at most 
	$c^n$, where $c$ depends only on $\ell$,  $a$ and $c_*$.
\label{l:pa}
\end{lemm}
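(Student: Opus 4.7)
The plan is a standard Peierls animal-counting argument with two ingredients: a maximum-degree bound on the geometric graph $G(\cL, ar)$ coming from the doubling condition, and an injective encoding of each candidate subset by a depth-first walk using condition (O) to break ties canonically.

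First I would bound the local size of $\cL$. Fix $y \in A$; since $a \geq 1$, Lemma \ref{l:cov}(i) (applied with $\epsilon = 1/a$, or directly by hypothesis if $a = 1$) shows that $B(y, ar)$ can be covered by at most $\rho(1/a)$ balls of radius $r$, each meeting $\cL$ in at most $\ell$ points. Thus $|\cL \cap B(y, ar)| \leq \Delta$ with $\Delta := \ell\,\rho(1/a)$, a constant depending only on $\ell$, $a$, and (via $\rho$) the doubling constant $c_*$; in particular $G(\cL, ar)$ has maximum degree at most $\Delta - 1$. Also, since $x_0 \in \cL_r$, the set $\cL \cap B(x_0, r)$ has at most $\ell$ elements, so by losing a multiplicative factor of $\ell$ I may fix $y_0 \in \cL \cap B(x_0, r)$ and restrict attention to the relevant $S$ that contain $y_0$.

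For each such $(ar, 1)$-connected $S$ of cardinality $n$ with $y_0 \in S$, I would perform a canonical depth-first traversal of $G(S, ar)$ starting at $y_0$: descend to the smallest (in the order (O)) unvisited neighbour in $S$ if any exists, else backtrack along the implicit spanning tree. Since the tree has $n - 1$ edges, each traversed twice, the walk has length $2(n-1)$. I would encode the walk as a word over an alphabet of size $\Delta$, with one \textit{backtrack} symbol and $\Delta - 1$ \textit{descent} symbols indexing the position in the (O)-ordering of the at-most-$\Delta - 1$ non-parent neighbours in $\cL$ (not in $S$) of the current vertex. The word together with $y_0$ and the ambient graph $G(\cL, ar)$ determines the sequence of visited vertices uniquely, so distinct $S$ give distinct words. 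This gives at most $\Delta^{2(n-1)}$ subsets per $y_0$, hence at most $\ell\,\Delta^{2(n-1)} \leq c^n$ in total for $c := \ell\,\Delta^2$.

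The main obstacle is verifying that the encoding is injective: the decoder must interpret each descent symbol as a well-defined neighbour of the current vertex without reference to $S$. Condition (O) is exactly what enables this, by providing a canonical vertex-independent ordering of the at-most-$\Delta - 1$ non-parent $\cL$-neighbours, so that the decoder can replay the walk inside $G(\cL, ar)$ and recover the same vertex set that the encoder visited. A minor point to check along the way is that every symbol produced by the encoder always corresponds to a vertex that is in $S$ and has not yet been visited, which is automatic from the rules of the canonical DFS.
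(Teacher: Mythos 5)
Your proposal is correct and takes essentially the same approach as the paper: both bound the local size of $\cL$ via the doubling-based covering estimate $|\cL\cap B(y,ar)|\le\ell\rho(1/a)$ from Lemma~\ref{l:cov}(i), and then use the total order (O) to produce a canonical, injective encoding of each $(ar,1)$-connected subset by a word over a bounded alphabet. Where the paper invokes the list-building animal-counting algorithm of \cite[Lemma~9.3]{RGG} (giving $2^{\ell\rho(1/a)n}$), you write out a self-contained DFS spanning-tree encoding of length $2(n-1)$ over an alphabet of size $\Delta=\ell\rho(1/a)$, and the key point you correctly isolate is that (O) lets the decoder, working only in $G(\cL,ar)$, interpret each descent symbol unambiguously without knowing $S$. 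One small remark: the multiplicative factor of $\ell$ from ranging over $y_0\in\cL\cap B(x_0,r)$ is superfluous, since any subset of $\cL$ containing $x_0$ forces $x_0\in\cL$, so one may simply take $y_0=x_0$ (and if $x_0\notin\cL$ the count is zero); this does not affect the validity of your bound.
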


\begin{proof}
	First we claim that $|\cL \cap B(x,ar) | \leq \ell \rho(1/a)$ for
	all $x \in A$,
	where $\rho(1/a) $ is as given in Lemma \ref{l:cov}-(i).
	Indeed, we can cover $B(x,ar)$ by $\rho(1/a)$ balls 
	of radius $r$, and each of these balls contains at most $\ell $ points
	of $\cL$.

There is a standard algorithm (of constructing a non-decreasing sequence
	of lists) for counting the connected sets of $\Z^d$;
	see \cite[Lemma 9.3]{RGG} for details of the algorithm. 

	The algorithm remains valid in this general setting,
 with the lexicographical ordering replaced by the total ordering of $A$ (using assumption (O)).
 This algorithm has to stop at time $n$ (cardinality of the set), and 
at each step the number of  possibilities for the set of the added elements
	is bounded by $2^{\ell \rho(1/a)}$ (all possible subsets
	of the set of points of $\mathcal L$ 
	within distance $a r$ from a fixed point);
	%which is at most $\rho_1$),
	hence the number of $ar$-connected sets of cardinality 
	$n$ is at most $2^{\ell \rho(1/a) n}$.
\end{proof}

%and the third imposes a net structure akin to regular lattices in $\R^d$. 

%{\bf move some of the above to the intro if we do $M$}

%By the , there is no loss of generality in the first half of (G)-(ii). 

Preparing for a proof of Proposition \ref{p:up}, we recall a 
condition that is equivalent to $k$-connectedness  of a graph $G$.
 We say that non-empty sets $U,W\subset V$ in a graph 
 $G$ with vertex set $V$ form a 
{\em$k$-separating pair} if (i) the subgraph of $G$ induced by $U$ is connected, and likewise for $W$; (ii) no element of $U$ is adjacent to any element of $W$; (iii) the number of vertices of $V\setminus (U\cup W)$ lying adjacent to $U\cup W$ is at most $k$. We say that $U$ is
a {\em  $k$-separating set} for $G$ if
 (i) the subgraph of $G$ induced by $U$ is connected,
 and (ii) at most  $k$  vertices of $V \setminus U$ lie adjacent
to $U$. 
   The relevance of these definitions is presented in the following lemma.

  \begin{lemm}\cite[Lemma 13.1]{RGG}
  \label{l:sep}
  Let $G$ be a graph with more than $k+1$ vertices. Then $G$ is either $(k+1)$-connected, or it has $k$ separating pair, but not both. 
  \end{lemm}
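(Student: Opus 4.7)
My plan is to prove the two halves of the dichotomy separately, exploiting the direct correspondence between $k$-separating pairs of vertex sets and small vertex cuts in $G$. Write $n := |V(G)|$, so that $n \geq k+2$ by hypothesis.

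For the first half (a $k$-separating pair forbids $(k+1)$-connectedness), let $(U,W)$ be a $k$-separating pair and define $S$ to be the set of vertices of $V(G) \setminus (U \cup W)$ that have at least one neighbour in $U \cup W$; by condition (iii) of the definition, $|S| \leq k$. I would then argue that $U$ and $W$ lie in distinct components of $G \setminus S$: any path from $U$ to $W$ in $G \setminus S$ must exit $U$ (since there are no $U$--$W$ edges) through some vertex of $V \setminus (U \cup W \cup S)$, but by the very definition of $S$ no such vertex has a neighbour in $U \cup W$, a contradiction. Hence $G \setminus S$ is disconnected, and since $|S| \leq k$ this rules out $(k+1)$-connectedness.

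Conversely, assume $G$ is not $(k+1)$-connected, so there is $S \subseteq V(G)$ with $|S| \leq k$ such that $G \setminus S$ is disconnected; this is consistent with $|V(G) \setminus S| \geq n - k \geq 2$. Let $U$ and $W$ be the vertex sets of two distinct components of $G \setminus S$. Conditions (i) and (ii) of the definition of a $k$-separating pair hold by construction, since components are connected and there are no edges between distinct components. For condition (iii), observe that any vertex of $V(G) \setminus (U \cup W)$ adjacent to $U \cup W$ must lie in $S$: a vertex lying in a component of $G \setminus S$ other than $U$ or $W$ has, by definition of component, no edge to $U$ or $W$. Hence the number of such external neighbours is at most $|S| \leq k$, so $(U,W)$ is a $k$-separating pair.

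The whole argument is essentially bookkeeping against the definitions; the only point where I would pause is the observation that components of $G \setminus S$ other than $U$ and $W$ contribute nothing to the external neighbourhood of $U \cup W$, which is what allows the separation count to be bounded by $|S| \leq k$ rather than something larger. I do not anticipate a substantive obstacle.
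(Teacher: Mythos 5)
Your proof is correct. The paper does not reprove this lemma (it is cited verbatim as Lemma 13.1 of \cite{RGG}), so there is no internal argument to compare against; your write-up follows the standard route and handles both implications cleanly. The forward direction correctly identifies the external neighbourhood $S$ of $U\cup W$ as a vertex cut of size at most $k$ separating $U$ from $W$, using (ii) to rule out a direct $U$--$W$ edge and the definition of $S$ to block any two-step escape from $U$. The converse direction correctly takes $U,W$ to be two components of $G\setminus S$ for a cut set $S$ with $|S|\le k$, and your observation that vertices in \emph{other} components of $G\setminus S$ contribute nothing to the external neighbourhood is exactly the point that makes condition (iii) hold with bound $|S|\le k$. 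You also invoke $|V(G)|\ge k+2$ at the right place to guarantee $G\setminus S$ has at least two vertices and hence genuinely has two components; that is the only place the cardinality hypothesis is needed, and you flagged it, so the argument is complete.
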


By Lemma \ref{l:sep}, to prove Proposition \ref{p:up}
it suffices to prove,
for arbitrary $u> \linebreak \max_j a_j^{-1} \hat H_\beta(b_j/d)$,
 the non-existence of $(k(n)-1)$-separating pairs in
 $G(\X_n,r_n)$ with $r_n = (u\log n/n)^{1/d}$, 
 as $n\to\infty$. Notice that, for any fixed $K\in \N$, if $(U,W)$ is a $(k-1)$-separating pair, then either both $U$ and $W$ have
 diameter at least $K r_n$, or one of them, say 
$U$, is a $(k-1)$-separating set 
%(i) holds for $U$ (iii) holds with $U$ in place of $U\cup W$) 
of diameter at most $Kr_n$. 
Here by the {\em diameter} of a a non-empty set $U \subset A$
we mean the number $\diam(U):= \sup_{u,v \in U}\dist(u,v)$.

The goal is to prove that neither outcome is possible when $n\to\infty$.   
Let us first eliminate the existence  of a small separating set.

\begin{lemm}
\label{l:small}
Suppose the assumptions of Proposition \ref{p:up} hold.
If $\beta=\infty$, let $u>\max_j a_j^{-1}$ and for $n\in\N$, set $r_n = (u k(n)/n)^{1/d}$. If $\beta<\infty$,  let $u> \max_{j\in [\ell]} a_j^{-1} \hat H_\beta(b_j/d)$, and for $n \in \N$ 
	set $r_n = (u (\log n)/n)^{1/d}$.
For $K \in \N$, let 
	$E_n(K,u)$ be the event that there exists a $(k(n)-1)$-separating set
for $G(\X_n,r_n)$  of diameter at most $K r_n$. Then, given 
any $K \in \N$, almost surely $E_n(K,u)$ occurs for only finitely many $n$.   
\end{lemm}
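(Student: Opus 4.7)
The plan is to fix $u$ strictly above the claimed limsup, discretise each partition element $T(j,K,r_n)$ by a net of cardinality $O(r_n^{-b_j})$, enumerate candidate small separating sets by a Peierls-type count, bound each candidate via Chernoff (Lemma~\ref{lemChern}(b)) using the lower bound \eqref{0923}, and conclude by Borel--Cantelli (passing to a polynomial subsequence $z(n)=n^{K_0}$ if necessary, as in the proof of Proposition~\ref{gammalem}). I will sketch only the case $\beta<\infty$; the case $\beta=\infty$ is analogous with $\hH_\beta(b_j/d)$ replaced by $1$ and $\log n$ replaced by $k(n)$.

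Fix $K\in\N$ and $u>\max_{j\in[\ell]}a_j^{-1}\hH_\beta(b_j/d)$, and set $y_j:=ua_j$. The definition of $\hH_\beta$ together with the strict monotonicity of $t\mapsto tH(\beta/t)$ on $[\beta,\infty)$ yields $\eta:=\min_{j\in[\ell]}\bigl(y_jH(\beta/y_j)-b_j/d\bigr)>0$. For each $j$, invoking \eqref{1231a} and Lemma~\ref{l:cov}(i), I would choose a net $\mathcal{N}_{j,n}\subset T(j,K,r_n)$ of cardinality $O(r_n^{-b_j})=O(n^{b_j/d})$ such that every point of $T(j,K,r_n)$ lies within $r_n/5$ of $\mathcal{N}_{j,n}$. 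Any $(k(n)-1)$-separating set $U$ with $\diam(U)\leq Kr_n$ meeting $T(j,K,r_n)$ lies in $B(y,(K+1)r_n)$ for some $y\in\mathcal{N}_{j,n}$, and by \eqref{0923}, $\mu(U_{r_n}\setminus U)\geq a_jr_n^d$. Hence Lemma~\ref{lemChern}(b) gives, conditionally on $U\subset\X_n$, that the probability that fewer than $k(n)$ points of $\X_n\setminus U$ lie in $U_{r_n}\setminus U$ is at most $\exp\bigl(-(y_jH(\beta/y_j)+o(1))\log n\bigr)\leq n^{-b_j/d-\eta/2}$ for all $n$ large.

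The enumeration of the candidate sets $U$ is the crux. For each anchor $y$ and each size $m\geq 1$, I would shadow $U$ by a subset of an auxiliary finer deterministic net of $B(y,(K+1)r_n)$ at scale $r_n/C$ with $C$ large, which has bounded local density by Lemma~\ref{l:cov}(i), and then apply Lemma~\ref{l:pa} to this deterministic net to count the $(r_n,1)$-connected shadows of size at most $m$ by $c_0^m$ for a deterministic constant $c_0$. Multiplying by the intensity factor $\binom{n}{m}\leq(en/m)^m$ and by the probability $\mu(B(y,(K+1)r_n))^m=O((r_n^d)^m)$ that a specified $m$-tuple of $\X_n$ lies in the anchor ball, the total contribution per anchor is dominated by a geometric series in $m$ summing to $n^{-b_j/d-\eta/2+o(1)}$. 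Summing over the $O(n^{b_j/d})$ anchors in $\mathcal{N}_{j,n}$ and over $j\in[\ell]$ gives $\Pr[E_n(K,u)]=O(n^{-\eta/3})$, and Borel--Cantelli then completes the proof. The principal obstacle will be this combinatorial step: a direct Peierls bound applied to $\X_n$ produces a constant that grows with the (order $\log n$) local density of $\X_n$, so the reduction to a deterministic finer net is essential in order that the combinatorial factor $c_0^m$ be dominated by the per-anchor Chernoff factor $n^{-b_j/d-\eta/2}$ afforded by the strict inequality $u>a_j^{-1}\hH_\beta(b_j/d)$.
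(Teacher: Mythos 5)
Your proposal diverges from the paper's proof in a way that introduces a genuine gap in the crucial combinatorial step. You union-bound over candidate separating sets $U\subset\X_n$ of every size $m$, multiplying the per-shape Chernoff estimate by the intensity factor $\binom{n}{m}\mu(B(y,(K+1)r_n))^m$. The claim that this produces a geometric series summing to $n^{-b_j/d-\eta/2+o(1)}$ is incorrect: since $n\mu(B(y,(K+1)r_n))=\Theta(\log n)$, we have $\sum_{m\ge 1}\binom{n}{m}\mu(B)^m \le e^{n\mu(B)} = n^{\Theta(1)}$, with the dominant contribution coming from $m$ of order $\log n$. Thus the union bound costs a polynomial factor $n^{cu}$ for some geometric constant $c>0$, and there is nothing forcing $cu<\eta$. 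The strict inequality $u>a_j^{-1}\hH_\beta(b_j/d)$ buys a fixed exponent $\eta$, which cannot absorb an arbitrary polynomial loss. A secondary issue is that your Peierls count $c_0^m$ is not really doing anything here: since $\diam(U)\le Kr_n$, a shadow of $U$ on a fine net of $B(y,(K+1)r_n)$ lives in a set of bounded cardinality, so the number of shadows is simply a constant independent of $m$; the Peierls argument is needed for the large-diameter separating pairs (Lemma \ref{l:big}), not for the small ones.

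The paper's proof avoids the overcount entirely. For each candidate separating set $U$ it passes to a deterministic ``pixel version'' $\sigma(U)\subset\mathcal{L}_n$ obtained by intersecting $\mathcal{L}_n$ with $U_{\epsilon r_n}$. The key observation is that the bad event $\{\X_n(U_{r_n}\setminus U)<k(n)\}$ is \emph{implied} by the single event $\{\X_n(D_{\sigma(U),n})<k(n)\}$, where $D_{\sigma,n}=\sigma_{(1-2\epsilon)r_n}\setminus\sigma_{\epsilon r_n}$ is determined by the shape $\sigma$ alone, not by the particular points of $\X_n$ constituting $U$. Because $\diam\sigma\le 2Kr_n$ and $\mathcal{L}_n$ has bounded local density, the number of admissible shapes $\sigma$ touching $T(j,K,r_n)$ is $O(r_n^{-b_j})$ -- not $\sum_m\binom{n}{m}\mu(B)^m$. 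The union bound is then over $O(r_n^{-b_j})$ deterministic events, each with probability $\le n^{-b_j/d-\epsilon+o(1)}$, so the product is summable along the subsequence $z(m)=m^J$; the subsequence trick (together with the slack $k'(n)=\lceil\beta'\log n\rceil$, $\beta'>\beta$) closes the argument for all $n$. In short: you should replace the union bound over subsets of $\X_n$ by a union bound over pixel shapes, which is exponentially cheaper and is the mechanism that makes the Chernoff gain dominate.
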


%[{\bf proof revised, using $\mu_*$ for counting related arguments, case infinite beta added}]

\begin{proof}
	First assume $\beta<\infty$. 
%For this proof we need some notation.
% Let $j \in [\ell]$.
The condition on $u$ implies that
	$ua_j > \beta$ and $ua_j H(\beta/(ua_j)) > b_j/d$,
for each $j \in [\ell]$.  Then we can and do choose $ \beta' > \beta $ 
and $\epsilon \in (0,1/4)$ such  that for each $j \in [\ell]$, 
	$(1- 3 \eps)^du a_j > \beta'$ and
%$\beta<\beta'<\beta''<\hat H_\beta(b_j/d)<ua_j$.
\begin{align*}
(1-3\epsilon)^d ua_j H\Big(\frac{\beta'}{(1-3\epsilon)^d ua_j}\Big)>\frac{b_j}{d} +\epsilon. 
\end{align*}
For $n \in \N$
 define $k'(n)=\lceil \beta' \log n\rceil$.
	%and $k''(n)=\lceil \beta'' \log n\rceil$. 

	Let $K \in \N$, and for $r \in (0,r_0(K))$
	let $T(j,K,r)$ be as in the assumptions of Proposition \ref{p:up}.
For $j \in [\ell]$, we claim that
	$\kappa(T(j,K,r_n),\eps r_n/5)= O(r_n^{-b_j})$ as $n \to \infty$. 
%{\bf Please can you justify this?}
Indeed,
$$
	\kappa(T(j,K,r_n),\eps r_n/5) \le \kappa(T(j,K,r_n),r_n)\sup_{x\in A}\kappa(B(x,r_n), \epsilon r_n/5) \le \rho \kappa(T(j,K,r_n),r_n),
$$
where $\rho = \rho(\epsilon/5)$ is the constant in Lemma \ref{l:cov}-(i).
	The claim follows from  the assumption (\ref{1231a}).

	Choose $n_0 \in \N$ such that $r_n < r_0(k)$ for all
	$n \in \N$ with $n \geq n_0$.
By Lemma \ref{l:cov}-(i), for each $j \in [\ell]$ and $n \in \N$
 we can find a set
 $\mathcal L_{n}^{j}
	\subset T(j,K,r_n)$,
with $|\cL_n^j| \leq \kappa( T(j,K,r_n),\epsilon r_n/5) = O(r_n^{-b_j})$, 
such that  $T(j,K,r_n) \subset \cup_{x \in \cL_n^j}
 B(x, \epsilon r_n)$
 and that the balls 
	$B(x, r_n \epsilon /5)$, $x \in \cL_n^j$, are disjoint.
Set 
\begin{align}
\label{e:def_Ln}
 \mathcal L_n := \cup_{i=1}^\ell \mathcal L^j_n. 
\end{align}
%Hence $|\mathcal L^j_n|=O(r_n^{-b_j})$ for all $j\in [\ell]$ as $n\to\infty$. 

	For $n \geq n_0, j \in [\ell]$
	let $\mathcal T_n^j=\{\sigma\subset \mathcal L_n: \diam(\sigma)\le 2K r_n,  \sigma\cap T(j,K,r_n)\ne\emptyset \}$.   
	We claim that the cardinality of $\mathcal T^j_n$ is $O(|\cL_n^j|)=O(r_n^{-b_j})$. Indeed,  $\sigma\cap T(j,K,r_n) \ne\emptyset$ means
$\sigma\cap \mathcal L^j_n\ne\emptyset$. Moreover, as explained below,
\begin{align}
	\limsup_{n \to \infty} \sup_{x \in \cL_n} |B(x,2Kr_n) \cap \cL_n| < \infty,
	\label{1231b}
\end{align}
and $\diam(\sigma)\le 2Kr_n$. The claim about  cardinality follows from this.   

Now we show (\ref{1231b}). By Lemma \ref{l:cov}-(i), for
$n$ large and for all $x \in A$, we can cover $B(x,2Kr_n)$ by
$\rho(\eps/(10K)) $ balls of radius $r_n \eps/5$, and each of these
balls contains at most $\ell$ points of $\cL_n$.

	For $n \geq n_0$ and $\sigma \subset \mathcal L_n$, set 
\begin{align}
	D_{\sigma,n} := \sigma_{(1-2 \eps) r_n} \setminus \sigma_{\eps r_n}. 
	\label{e:Ds}
\end{align}
Let $J \in \N$ with $J >1/\epsilon$. For $m \in \N$,  define
	$z(m):=m^J$. For 
	%$\sigma\in \mathcal T^j_{z(m)}$ with  $j\in [\ell]$, define 
	$\sigma \subset \mathcal L_{z(m)}$, define 
\begin{align*}
F_m(\sigma) = \{  \X_{z(m)}( D_{\sigma, z(m)})< k'(z(m)) \}.
\end{align*}

	Now let $n \in \N$ and choose $m= m(n) $ such that $z(m ) \leq n 
	< z(m+1)$. Assume $z(m) \geq n_0$. 
Suppose that $E_n(K,u)$ occurs and let $U$ be
a $(k(n)-1)$-separating set of $G(\X_n,r_n)$ with
	$\diam(U)\le Kr_n$. We define its `pixel version' $\sigma(U) :=
	\mathcal L_{z(m(n))} \cap U_{\eps r_{z(m(n))}}$.
	%\{x\in \mathcal L_n: U\cap B(x,\epsilon r_n)\ne\emptyset\}$. 

	Since $\sigma(U)\subset A$, there exists $j\in [\ell]$ such that
	$\sigma(U)\cap T(j,K,r_{z(m(n))})
	\ne\emptyset$. By our choice of $\epsilon$, provided $n$ is large enough
	we have $\diam(\sigma(U))\le 2Kr_{z(m(n))}$. Therefore
	$\sigma(U) \in \cup_{j=1}^{[\ell]} \mathcal T^j_{z(m(n))}$.

	Since $U$ is $(k(n)-1)$-separating for $G(\X_n,r_n)$,
	we have 
	%$\X_n( (U\oplus B(o,r_n)) \setminus U )< k(n)$. 
	$\X_n( U_{r_n} \setminus U )< k(n)$. 
We claim that $\X_n(D_{\sigma(U),z(m(n))})<k(n)$ provided $n$ is large enough. 
Indeed, by the triangle inequality
$\sigma(U)_{(1-2 \eps)r_{z(m(n))}} \subset U_{(1-\eps)r_{z(m(n))}}
\subset U_{r_n}$ (for $n$ large),
while $U \subset \sigma(U)_{\eps r_{z(n(m))}}$.
Thus $D_{\sigma(U), z(m(n))} \subset U_{r_n} \setminus U$, and
the claim follows.
Also, provided $n$ is large enough, we have $k(n)\le k'(z(m(n)))$.
Thus we have the event inclusions
\begin{align*}
%\label{0922a}
	E_n(K,u) 
	& \subset 
	\cup_{j=1}^{\ell} \cup_{\sigma\in \mathcal T^j_{z(m(n))}}
	\{ \X_n(D_{\sigma,z(m(n))})< k(n)\}
	\\
	& \subset 
	\cup_{j=1}^{\ell} \cup_{\sigma\in \mathcal T^j_{z(m(n))}}
	F_{ m(n)}(\sigma).
\end{align*}

By (\ref{e:Ds}), for any $n \in \N$ and $\sigma \subset \mathcal L_n$ we have
$D_{\sigma,n} \supset (\sigma_{\eps r_n})_{(1-3 \eps)r_n} \setminus 
\sigma_{\eps r_n}$. Hence 
%Recall that $k'(n)=\lceil \beta' \log n\rceil$. 
by \eqref{0923}, for all large enough $n$ and all
$\sigma \in \cup_{j\in [\ell]} \mathcal T_{n}^j$
we have  $\mu (D_{\sigma,n})
%_{(1-2\epsilon)r_n} \setminus D_{\sigma,n})
\ge a_j (1-3\epsilon)^d r_n^d$.
A simple coupling shows that, provided $m$ is large,
we have
\begin{align*}
\Pr[\cup_{j\in[\ell]}\cup_{\sigma\in \mathcal T^j_{z(m)}} F_m(\sigma)]= \sum_{j=1}^{\ell} O(r_{z(m)}^{-b_j})\Pr[ \Bin(z(m),  (1-3\epsilon)^d a_j r_{z(m)}^d)
	< k'(z(m))].
\end{align*}
By Lemma \ref{lemChern}(b) and our choice of $r_n$ and $\epsilon$, provided $m$ 
is large, we have
\begin{align*}
\Pr[\cup_{j\in[\ell]}\cup_{\sigma\in \mathcal T^j_{z(m)}} F_m(\sigma)]\\
=O(1) \sum_{j=1}^{\ell}
	\exp\Big( (b_j/d)\log z(m) - (1-3\epsilon)^d u a_j  H\big(\frac{\beta'}{(1-3\epsilon)^d ua_j}\big)\log z(m) \Big) = O(m^{-J\epsilon}),
\end{align*}
which is summable in $m$. 

It follows from the Borel-Cantelli lemma that almost surely $\cup_{j\in[\ell]}\cup_{\sigma\in \mathcal T^j_{z(m)}} F_m(\sigma)$ occurs only for finitely many $m$ which implies that $E_n(K,u)$ occurs for only finitely many $n$. This completes the proof of the case $\beta<\infty$. 

Now assume $\beta=\infty$.  For the rest of the proof  assume also that $\epsilon\in (0,1)$ is such that $ua_j(1-\epsilon)^d>1$ for all $j \in [\ell]$.
We do not have to go through the subsequence argument as before because the growth of $k(n)$ is super-logarithmic. 
Now redefine $F_n(\sigma) := \{\mathcal X_n (D_{\sigma, n})< k(n)\}$. If $E_n(K,u)$ happens then we now redefine the pixel version of the separating
set $U$ as
$$
\sigma(U) := \cL_n \cap U_{\eps r_n},
$$
and enumerate the possible shapes $\sigma$ of the pixel version.  Thus we have 
\begin{align*}
	E_n(K,u) \subset \cup_{j=1}^\ell \cup_{\sigma\in \mathcal T^j_n} F_n(\sigma).
\end{align*}
Using estimates of $|\mathcal T^j_n|$,
%and a simple coupling argument, 
we have
\begin{align*}
	\Pr[E_n(K,u)] = \sum_{j=1}^\ell O(r_n^{-b_j}) \Pr[\Bin(n, (1-3\epsilon)^d a_j r_n^d )<k(n)].
\end{align*}
Noticing  $r_n^{-1}=O(n^{1/d})$, and applying Lemma \ref{lemChern}-(b) leads to 
\begin{align*}
	\Pr[E_n(K,u)] = O(n^{b_j/d}) \sum_{j=1}^\ell \exp\Big( -  (1-3\epsilon)^d a_j u k(n) H\big(\frac{k(n)}{ (1-3\epsilon)^d a_j  u k(n)}\big) \Big)
\end{align*}
which is summable in $n$, and the claim follows by the Borel-Cantelli lemma. 
\end{proof}

The following lemma eliminates  the existence of a $(k(n)-1)$-separating pair
with both diameters larger than $Kr_n$.

%{\bf [case infinite beta added]}
\begin{lemm}
\label{l:big}
	Let the assumptions of Proposition \ref{p:up} hold.
	If $\beta=\infty$, let $u>\max_j a_j^{-1}$ and for $n\in\N$, set $r_n = (u k(n)/n)^{1/d}$. If $\beta<\infty$,  let $u> \max_{j\in [\ell]} a_j^{-1} \hat H_\beta(b_j/d)$, and for $n \in \N$ 
	set $r_n = (u (\log n)/n)^{1/d}$.
%
%	 Set
%	$r_n = (u(\log n)/n)^{1/d}$ with $u> a_j^{-1} \hat H_\beta(b_j/d)$ for all $j\in[\ell]$.  
	For $K \in \N$
let $H_n(K,u)$ denote the event that there exists a $(k(n)-1)$-separating
	pair $(U,W)$ in $G(\X_n,r_n)$ such that 
	$\min(\diam(U),\diam(W))\ge K r_n$.  
	Then there exists $K_1 \in  \N$ such that almost surely
	$H_n(K_1,u)$ occurs for only finitely many $n$.
\end{lemm}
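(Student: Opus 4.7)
The plan is to extend the Peierls-style counting of Lemma \ref{l:small}, but now replace the cardinality-based enumeration of pixel shapes by a \emph{perimeter-based} enumeration, so that the large-diameter hypothesis translates into summable tail bounds.

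First, I would reuse the $(\epsilon r_n)$-net $\cL_n$ from the proof of Lemma \ref{l:small}. Given a $(k(n)-1)$-separating pair $(U,W)$ with $\min(\diam U,\diam W) \geq K r_n$, take the pixel version $\sigma := \cL_n \cap U_{\epsilon r_n}$ of (say) $U$, the part of smaller diameter. By connectivity of $U$ in $G(\X_n,r_n)$ and condition (B), $\sigma$ is connected in the graph on $\cL_n$ with edge threshold $r_n(1+O(\epsilon))$ and has diameter at least $(1-O(\epsilon))Kr_n$. The interface $I(\sigma) := \sigma_{r_n/2}\setminus\sigma$ is disjoint from $W$ (since $\dist(U,W) > r_n$), so by the separating property $\X_n(I(\sigma)) < k(n)$.

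The geometric ingredient is a lower bound on $\mu(I(\sigma))$ in terms of the ``outer pixel boundary'' $P(\sigma) := |\{y \in \cL_n \setminus \sigma : \dist(y,\sigma) \leq r_n\}|$. By applying \eqref{0923} locally around each outer-boundary pixel and using Lemma \ref{l:cov}-(ii) to bound overlaps, one obtains $\mu(I(\sigma)) \geq c_1 a_{\min} P(\sigma) r_n^d$ where $a_{\min} := \min_j a_j > 0$. Using $d \geq 2$, a slicing argument along a geodesic chain in $\sigma$ realizing its diameter (each of $\Omega(K)$ transverse slices contributes at least one outward boundary pixel) yields $P(\sigma) \geq c_2 K$ uniformly over all qualifying shapes, whether ``thin'' or ``fat''.

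The combinatorial ingredient is a perimeter-based Peierls bound: adapting the growing-list algorithm in Lemma \ref{l:pa} to enumerate by appending outer-boundary pixels one at a time (each step admits at most $\ell\,\rho(1)$ choices by Lemma \ref{l:cov}-(i)), the number of connected subsets of $\cL_n$ containing a fixed anchor with outer boundary exactly $P$ is at most $c_3^P$. Combining with Lemma \ref{lemChern}(b), for $\beta < \infty$ one gets $\Pr[\X_n(I(\sigma)) < k(n)] \leq n^{-c_4 u P}$ for all shapes with $P(\sigma)=P \geq c_2 K$, for a suitable $c_4 > 0$. Union-bounding over the $O(n)$ possible anchors yields
\[
\Pr[H_n(K,u)] \leq O(n) \sum_{P \geq c_2 K}\left(c_3 n^{-c_4 u}\right)^P.
\]
For $n$ large, $c_3 n^{-c_4 u} < 1/2$, so the sum is dominated by its first term $(c_3 n^{-c_4 u})^{c_2 K}$, and the overall bound is summable in $n$ provided $c_2 c_4 u K_1 > 2$. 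Choosing $K_1$ so large, Borel--Cantelli concludes the $\beta < \infty$ case; the $\beta = \infty$ case follows analogously with $r_n^d = uk(n)/n$ and $\log n$ replaced by $k(n)$.

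The main obstacle I anticipate is the perimeter-based counting bound: it is classical in $\Z^d$ (Hammersley--Welsh / animal enumeration), but needs re-derivation in the abstract doubling metric setting, with multiplicity of the growing-list algorithm controlled through Lemma \ref{l:cov}. A secondary technicality is establishing $P(\sigma) \geq c_2 K$ uniformly for every connected $\sigma$ of diameter $\geq K r_n$, which requires the slicing/projection argument to be formulated metrically rather than combinatorially.
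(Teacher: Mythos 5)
Your approach takes a genuinely different route from the paper, and while the route is not implausible in principle, it has two unresolved gaps that are the actual mathematical content of the lemma, so I cannot accept the proposal as a proof.

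The paper does \emph{not} pixelize the body $U$ and count by perimeter. Instead it works with the topological boundary $\partial_W U := \overline{W'}\cap\overline{U'}$ of the separating pair (where $W'$ is the component of $A\setminus U_{r_n/2}$ containing $W$ and $U' = A\setminus W'$). Condition (T) (unicoherence) gives connectivity of $\partial_W U$, and condition (G) (via Proposition~\ref{p:geo}) gives the diameter lower bound $\diam(\partial_W U)\gtrsim \min(\delta_1,Kr_n)$. The paper then pixelizes this \emph{surface} into a connected set $D_WU$ of cardinality $\gtrsim K$, applies the cardinality-based Peierls bound of Lemma~\ref{l:pa} to that connected surface pixelization, and observes that the thickened surface $(D_WU)_{\epsilon r_n/5}$ lies in $U_{r_n}\setminus U$ and hence carries fewer than $k(n)$ sample points. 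Your argument never invokes (T) or (G); since these are precisely the conditions built into Proposition~\ref{p:up} to make this lemma work, that omission is a strong signal that something is being swept under the rug.

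The two gaps, concretely. First, the claim $P(\sigma)\ge c_2 K$ via ``transverse slices along a geodesic chain'' is a Euclidean statement. In the general metric-space setting of Proposition~\ref{p:up}, $d$ is only a volume-scaling exponent (it need not be an integer topological dimension), geodesics need not exist, and ``transverse slice'' has no meaning; you cannot even formulate the argument, let alone prove it. (The assumption $d\ge 2$ appears only in Theorem~\ref{t:M}, not in Proposition~\ref{p:up} or Lemma~\ref{l:big}.) Second, the perimeter-indexed Peierls bound is not the same as Lemma~\ref{l:pa}, which counts connected pixel sets of \emph{cardinality} $n$. For fat shapes the cardinality can be of order $K^d$ while the perimeter is of order $K^{d-1}$, so enumerating by perimeter requires bounding a count over shapes of potentially much larger cardinality; the growing-list algorithm in \cite[Lemma 9.3]{RGG} does not directly yield this, and the standard $\Z^d$ lattice-animal-by-perimeter estimates rely on lattice structure not available here. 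You flag both points as ``anticipated obstacles,'' but they are not technicalities to be cleaned up later --- in the abstract setting they are where the whole difficulty lives, and the paper's solution is to change the object being enumerated (surface rather than body) precisely so that cardinality, not perimeter, is the right complexity parameter.
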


\begin{proof}
Suppose $H_n(K,u)$ holds.  
%that $(U,W)$ is a $k(n)-1$ separating pair of $G(\X_n, r_n)$ satisfying the diameter requirement. 
Then $U_{r_n/2}$ and $W_{r_n/2}$ are disjoint and connected in $A$.
One of the components of $A\setminus U_{r_n/2}$
contains $W$, denoted by $W'$. Set $U'= A\setminus W'$.
Then $U\subset U'$, $W\subset W'$ and $A= W'\cup U'$.
Let $\partial_W U := \overline{W'}\cap \overline{U'}$. Then
$\partial_WU$ is connected by the unicoherence of $A$. Moreover, 
any continuous path in $A$ connecting $U$ and $W$ 
	must pass through $\partial_W U$. 

Recall $ \delta_1$ and $K_0$ in the assumption (G).  We claim
	(and show in the next few paragraphs) that 
\begin{align}
\label{0922b}
\diam(\partial_W U) \ge \frac{1}{2K_0+2} \min(\delta_1/3,
	\diam(W)/3, \diam(U)/3). 
\end{align}
	Suppose the opposite. Setting $b=\diam(\partial_W U)$, we can find $x\in A$ such that $\partial_W U\subset B(x,b)$, and we can find $X\in U\setminus B(x,b), Y\in W\setminus B(x,b)$. Since $b< \delta_1/3$, the number of components of $A\setminus B(x,b)$ is at most two.
	There have to be two components because otherwise $X$ and $Y$ can be connected by a path in $A$ disjoint from $\partial U$, which is a contradiction.  

Suppose that $X$ lies in the  component of $A \setminus B(x,b)$ 
	having diameter at most $K_0 b$, denoted by $Q_X$, and $Y$ 
	lies in the other component, denoted by $Q_Y$ (if it is the other
	way round we reverse the roles of $X$ and $Y$ in the rest of this
	argument). 
We claim that there exists $X'\in U$ such that $\dist(X,X')>(2K_0+2)b$. If not, then for any $X_1, X_2\in U$, we have by triangle inequality that $\dist(X_1, X_2)\le 2(2K_0+2)b$, yielding that $\diam(U)\le 2(2K_0+2)b$, contradicting $\diam(U)>3(2K_0+2)b$ by the negation of \eqref{0922b}.  

We claim that $\dist(X,B(x,b)) \leq K_0b$. To see this, using the assumed
connectivity of $A$, take a continuous path in $A$ from  $X$ to $Y$.
The first exit point of this path from $Q_X$ lies in $B(x,b)$ (else it would
not be an exit point from $Q_X$) but also in the closure of $Q_X$, and hence
in $B(X,K_0b)$.  This yields the latest claim.

We show that $X'$ and $Y$ have to be in the same component of 
$A \setminus B(x,b)$.  To this end, notice first that $X'$ cannot be in
$Q_X$,  because for any $z\in Q_X$, 
\begin{align*}
\dist(X,z)\le K_0 b <  (2K_0+2) b.
\end{align*}
Secondly, $X'$ cannot be in $B(x,b)$ either because for any $z\in B(x,b)$, we have
\begin{align*}
\dist(z,X)\le \dist(X,B(x,b)) + 2b \le (K_0 +2)b < (2K_0+2) b.
\end{align*}
Therefore, $X'$ has to be in $Q_Y$, and we reach again to a contradiction that $X'$ and $Y$ can be connected by a path in $A$ disjoint from $\partial U$. We have thus proved \eqref{0922b}.

Let $\epsilon\in (0,1/9)$ and let 
	$\mathcal L_n$ be as defined at \eqref{e:def_Ln}
	(the $\eps$ does not have to be the same as it was there). 
	Recall that $\mathcal L_n$ has the {\em covering property}
	that for every $x \in A$ we have
	$\LL_n \cap B(x,r_n \eps) \neq \emptyset$
	and the {\em spacing property}
	that $|\LL_n \cap B(x,r_n \eps/3)| \leq \ell$ for all such $x$.

Define $D_WU = \{ x\in \mathcal L_n:
	B(x,\epsilon r_n)\cap \partial_WU \ne\emptyset  \}$. 
	Then by the covering property of $\mathcal L_n$,
	$(D_WU)_{\epsilon r_n}$ is connected and covers $\partial_WU$.
	That is,
	$D_WU$,  as a  subset of the metric space $A$, is
	$(2\epsilon r_n,1)$-connected.

 By \eqref{0922b} and the occurrence of $H_n(K,u)$,  we have   
\begin{align*}
	2\epsilon r_n |D_WU| 
	\ge \diam(\partial_W U) \ge  \min(\delta_1/3, Kr_n/3)/(2K_0+2)
\end{align*}
Therefore, provided $n$ is large, we have $|D_WU|\ge K/(6\epsilon(2K_0+2))$. 

We claim that there is a constant $c \in  (0,\infty)$, independent
of $n$, such that for
all $q \in \N$,
if $|D_WU|=q$ then $D_WU$ can take at most $O(r_n^{-\max(b_j)} c^q)$ 
possible 'shapes'.  Indeed, given $x_0 \in \LL_n$, set 
$$\mathcal U_{n,q}(x_0):=\{\sigma\subset \mathcal L_n: |\sigma|=q, \sigma \mbox{ is $(2\epsilon r_n,1)$-connected}, x_0\in \sigma\}.$$
Then $D_W U \in \cup_{j\in[\ell]} \cup_{x_0\in T(j,K,r_n)\cap 
\mathcal L_n} \mathcal U_{n,q}(x_0)$. 
By Lemma \ref{l:pa},  we have $|\mathcal U_{n,q}(x_0)|\le c^{q}$ for some finite constant $c$.  Recall from the proof of Lemma \ref{l:small} that 
$|T(j,K,r_n)\cap \mathcal L_n| = O(r_n^{-b_j})$.  The claim follows.

 For all $n \in \N$, if $x \in \partial_WU$ then $\dist(x,U) = r_n/2$.
 Therefore by the triangle inequality,
$(D_WU)_{\epsilon r_n/5} \subset U_r$, while   $U\cap (D_WU)_{\epsilon r_n/5}
=\emptyset$; 
hence $\X_n \cap (D_W U)_{\eps r_n/5} =\emptyset$.
This, together with the the union bound, yields that 
\begin{align}\label{0922c}
\Pr[ H_n(K,u)]\le \sum_{q\ge K/(6\epsilon(2K_0+2))} \sum_{\sigma}
	\Pr[ \X_n(\sigma_{\epsilon r_n/5}) < k(n) ],
\end{align} 
where the second sum is over all possible shapes
$\sigma \subset \mathcal L_n$ of cardinality $q$ that are
$(2 \epsilon r_n,1)$-connected.  Since every point in $A$ is covered at most
$\ell$ times, by \eqref{0923} (with $G=\{z\}$), there exists $\epsilon_1\in(0,1)$ such that 
\begin{align*}
	\mu(\sigma_{\epsilon r_n/5}) \ge
	(1/\ell) \sum_{z\in\sigma}\mu(B(z,\epsilon r_n/5))  
	\ge (q/\ell) \epsilon_1 (\epsilon r_n/5)^d.
\end{align*}

Suppose $\beta<\infty$. 
Set $\eps_2 := (\eps_1/\ell) (\eps/5)^d$.
By
\eqref{0922c}
%a simple coupling, 
and
Lemma \ref{lemChern}(b),
 provided $n$ is large,
\begin{align*}
\Pr[H_n(K,u)]\le \sum_{q\ge K/(6\epsilon(2K_0+2))} O(r_n^{-\max(b_j)} c^q)
	\Pr[\Bin(n,  \eps_2q r_n^d)< (\beta+1) \log n]
	\\
=O(1) \sum_{q\ge K/(6\epsilon(2K_0+2))} c^q
	\exp\Big( (\max(b_j)/d) \log n - \epsilon_2 q 
	u  H\big(\frac{\beta+1}{\eps_2 q 
	u}\big) \log n\Big).
\end{align*}
By the continuity of $H(\cdot)$ and the fact that
$H(0)=1$,
there exists $q_0>16  /(\epsilon_2 u)$ such that for any $q>q_0$,  we have
$H\big(\frac{\beta+1}{q \epsilon_2  u}\big)>1/2$ and 
$q u \epsilon_2 >4 \max(b_j)/d$. Choosing $K = 6\epsilon 
(2K_0+2)q_0$ so that $q\ge q_0$ in the sum,
we see that the exponent of the exponential is bounded above by
\begin{align*}
(\max(b_j)/d) \log n  - q \epsilon_2 (u/2)  \log n  \le
	-(qu \epsilon_2/4) \log n.
\end{align*}
Therefore, we have for $n$ large that
\begin{align*}
	\Pr[H_n(K,u)] &= O(1) \sum_{q\ge q_0} c^q 
	\exp( - q u(\epsilon_2/4) \log n) \\
	& = O(1) \sum_{q\ge q_0}	\exp( - q u(\epsilon_2/8) \log n)
= O(\exp( - q_0 u(\epsilon_2/8) \log n)) =O(n^{-2}). 
\end{align*}
The result in this case follows by applying the Borel-Cantelli lemma. 

If $\beta=\infty$, then by
\eqref{0922c} and the estimates of $|\cup_j\cup_{x_0}\mathcal U_{n,q}(x_0)|$ as previously,  we have
\begin{align*}
\Pr[H_n(K,u)]\le \sum_{q\ge K/(6\epsilon(2K_0+2))} O(r_n^{-\max(b_j)} c^q) \Pr[\Bin(n,  (q/\ell) \epsilon_1 (\epsilon r_n/5)^d)< k(n)]. 
\end{align*}
We  have $ r_n^{-\max(b_j)} = O(n^{\max(b_j)/d})$, and by  Lemma \ref{lemChern}-(b),
\begin{align*}
	\Pr[H_n(K,u)] 
	%\\
	\le \sum_{q\ge K/(6\epsilon(2K_0+2))}  c^q
	\exp\Big( (\max(b_j)/d) \log n - 
	q \epsilon_2 u k(n) H(\frac{k(n)}{q \epsilon_2  u k(n)}) \Big).
\end{align*} 
As before, we can choose $K=K_1$ (large) so that the $H(\cdot)$  term in  every summand is bounded from below by $1/2$.  By the super-logarithmic growth of $k(n)$, we conclude that $\Pr[H_n(K,u)]\le n^{-2}$ provided $n$ is large, so that
the Borel-Cantelli lemma gives the result in this case too. 
\end{proof}

\begin{proof}[\it Proof of Proposition \ref{p:up}]
	If $\beta = \infty$ then let $u > \max_{j \in [\ell]}(a_j^{-1})$
	and set $r(n):= u (k(n)/n)^{1/d}$.
	If $\beta < \infty$ then let 
	$u> \max_{j \in [\ell]}( a_j^{-1} \hat H_\beta(b_j/d))$
	and set $r_n := (u(\log n)/n)^{1/d}$.
By Lemmas \ref{l:small} and \ref{l:big}, there exists $K \in \N$ such
	that almost surely, $E_n(K,u)\cup H_n(K,u)$ occurs for at most
	finitely many $n$. By Lemma \ref{l:sep}, if
	$M_{n,k} > r_n$ then $E_n(K,u) \cup H_n(K,u)$ occurs.
	Therefore 
	 $M_{n,k(n)}\le r_n$ for all large enough  $n$, almost surely,
	and the result follows. 
\end{proof}

\subsection{Proof of Theorem \ref{t:M}}

In this subsection we go back to the mathematical framework in
Section \ref{secdefs}; that is, we make the assumptions in
the statement of Theorem \ref{t:M}. In particular we return
to assuming $A$ is a convex polytope
in $\R^d$ with $d \geq 2$, and the probability measure
$\mu$ has a density $f$.
We shall check the conditions required in order to apply Proposition \ref{p:up}.

To check these conditions, we shall use the following  lemma  and notation. 

\begin{lemm}\cite[Lemma 6.12]{CovPaper}
	Suppose $\ph, \ph'$ are faces of $A$  with $D(\ph)>0$ and $D(\ph')=d-1$, and with $\ph\setminus \ph'\ne \emptyset$. Then $\ph^o\cap \ph'=\emptyset$ and $K(\ph,\ph')<\infty$, where we set
	\begin{align}
K(\ph,\ph'):= \sup_{x\in\ph^o} \frac{\dist(x,\partial \ph)}{\dist(x,\ph')}. 
		\label{e:Kpp}
	\end{align}
\end{lemm}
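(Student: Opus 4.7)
My plan is to handle the two claims separately. Part 1 is a short supporting-hyperplane argument: since $D(\varphi')=d-1$, there is a supporting affine hyperplane $H$ of $A$ with $\varphi'=A\cap H$ and $A$ contained in one closed half-space bounded by $H$. If some $p\in\varphi^o$ lay in $\varphi'$, then $p\in\varphi^o\cap H$, and the standard fact that a supporting hyperplane meeting the relative interior of a face of a convex polytope must contain the whole face forces $\varphi\subset H$, whence $\varphi\subset A\cap H=\varphi'$, contradicting $\varphi\setminus\varphi'\neq\emptyset$.

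For Part 2, set $g(x):=\dist(x,\partial\varphi)/\dist(x,\varphi')$ for $x\in\varphi^o$, which by Part 1 is continuous with positive denominator, and write $\psi:=\varphi\cap\varphi'$. On $\varphi^o$, $g$ is locally bounded by continuity; near $x_0\in\partial\varphi\setminus\psi$, continuity of $\dist(\cdot,\varphi')$ gives a positive lower bound on the denominator in a neighborhood, while the numerator is at most $\diam(\varphi)$. By compactness of $\varphi$, the whole claim reduces to local boundedness of $g$ near each $z\in\psi$ (vacuous if $\psi=\emptyset$), which is the main obstacle.

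For $z\in\psi$, $\varphi$ and $\varphi'$ coincide locally with translates $z+\cK_\varphi$ and $z+\cK_{\varphi'}$ of their tangent cones, with $\cK_\varphi\subset\mathrm{aff}(\varphi)-z$ and $\cK_{\varphi'}\subset H-z$; positive homogeneity of cones reduces local boundedness of $g$ near $z$ to showing
\[
\sup_{u\in\cK_\varphi^o}\frac{\dist(u,\partial\cK_\varphi)}{\dist(u,\cK_{\varphi'})}<\infty.
\]
Let $n_H$ be the unit normal to $H$ pointing into $A$'s half-space, and decompose $n_H=\alpha f_1+n_H^\perp$ where $f_1$ is a unit vector in $\mathrm{aff}(\varphi)-z$ perpendicular to $V_0:=(\mathrm{aff}(\varphi)-z)\cap(H-z)$, $n_H^\perp\perp\mathrm{aff}(\varphi)-z$, and $\alpha>0$ because $\varphi\not\subset H$. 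Orient $f_1$ so that $\langle f_1,\cdot\rangle\geq 0$ on $\cK_\varphi$ (possible since $A$ lies on one side of $H$); then $\dist(u,\cK_{\varphi'})\geq\dist(u,H-z)=\alpha\langle f_1,u\rangle$ for $u\in\cK_\varphi$. Since $\langle f_1,\cdot\rangle$ is nonnegative on $\cK_\varphi$, Farkas's lemma writes $f_1=\sum_i\lambda_i a_i$ as a nonnegative combination of the facet-normals $a_i$ of $\cK_\varphi$; fixing any $j$ with $\lambda_j>0$ gives $\dist(u,\partial\cK_\varphi)\leq\langle a_j,u\rangle/\|a_j\|\leq\langle f_1,u\rangle/(\lambda_j\|a_j\|)$, so the supremum is bounded by $1/(\alpha\lambda_j\|a_j\|)$. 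The key step is introducing $f_1$, which simultaneously encodes the angular separation between $\mathrm{aff}(\varphi)$ and $H$ (via $\alpha$) and, through the dual cone, controls the numerator.
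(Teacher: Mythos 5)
The paper does not give its own proof of this lemma; it simply cites \cite[Lemma 6.12]{CovPaper}, so there is no internal argument to compare against. Your proof is self-contained and appears to be correct. Part~1 is the standard supporting-hyperplane fact and is fine, including the deduction $\varphi\not\subset H$ from $\varphi\setminus\varphi'\neq\emptyset$. Part~2 via compactness of $\varphi$ plus a reduction at each $z\in\varphi\cap\varphi'$ to the tangent-cone model is the right structure; the key points all check out: $n_H^\perp\perp(\mathrm{aff}(\varphi)-z)$ makes $\dist(u,H-z)=\alpha\langle f_1,u\rangle$, the sign convention $\langle f_1,\cdot\rangle\ge0$ on $\cK_\varphi$ is forced (not chosen) once $n_H$ points into $A$'s half-space, the identity $\dist(u,\partial\cK_\varphi)=\min_j\langle a_j,u\rangle/\|a_j\|$ for a polyhedral cone follows from the largest-inscribed-ball description and so your one-facet bound is valid, and $\langle f_1,u\rangle>0$ for $u\in\cK_\varphi^o$ since $f_1\neq 0$ lies in the dual cone.

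One notational caution worth flagging: you write $\cK_\varphi,\cK_{\varphi'}$ for the tangent cones of the \emph{faces} $\varphi,\varphi'$ (as lower-dimensional polytopes) at the point $z$, but in this paper $\cK_\varphi$ is reserved for the tangent cone of $A$ along $\varphi^o$, which is a full-dimensional cone in $\R^d$. Your usage is internally consistent and mathematically the right object, but in the context of the paper it would be clearer to introduce a fresh symbol (e.g.\ $\cK_\varphi(z)$) to avoid the collision. Also, when you invoke ``Farkas'' to write $f_1=\sum_i\lambda_i a_i$ with $\lambda_i\geq0$, what you really need is that the dual of the polyhedral cone $\{u:\langle a_i,u\rangle\geq 0\ \forall i\}$ is the finitely generated cone $\mathrm{cone}(\{a_i\})$ (closed because finitely generated); worth stating that version explicitly, since it is what justifies the decomposition even when $\cK_\varphi$ has a nontrivial lineality space (in which case $f_1$ necessarily annihilates the lineality space, which your argument implicitly uses).
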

Now define
 \begin{align}
	 \label{e:Kdef}
	 K(A) := \max \{K(\ph, \ph'): \ph, \ph' \in \Phi(A), D(\ph) >0,
	 D(\ph') = d-1, \ph \setminus \ph' \neq \emptyset \}.
 \end{align}
Then $K(A)<\infty$ since $A$ is a finite polytope. 

For $j \in \{0,1,\ldots,d\}$
let  $\Phi_j(A)$ denote the collection of $j$-dimensional faces 
of $A$.  
For any $D\subset A$ and $r>0$ set $D_r = \{x\in A:  B(x,r)\cap D\neq\emptyset  \}$.

\begin{lemm}
\label{OTG}
	The restriction of Lebesgue measure to $A$ has the doubling property.
	Moreover  the
	conditions (O), (B), (T) and (G) 
	are met. 
\end{lemm}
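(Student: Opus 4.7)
\textbf{Proof plan for Lemma \ref{OTG}.}

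I will verify the listed properties in order of increasing difficulty, leaving (G) for last. For (O), equip $\R^d$ with the lexicographic order and restrict it to $A$. For (B), the metric ball $B(x,r) = A \cap \bar B_{\R^d}(x,r)$ is an intersection of two convex subsets of $\R^d$, hence is convex and therefore path-connected. The connectedness part of (T) is immediate from convexity; unicoherence follows because $A$ is star-shaped with respect to any interior point (hence contractible and simply connected), and a simply connected, locally connected compact polyhedron is unicoherent. For the doubling property of $\mu_* := \Vol|_A$, the cone argument in the proof of Lemma \ref{l:volLB} applied with $f \equiv 1$ produces $c_0, r_0 > 0$ with $\mu_*(B(x,r)) \geq c_0 r^d$ for all $x \in A$ and $r \in (0, r_0)$; combined with the trivial upper bound $\mu_*(B(x, 2r)) \leq 2^d \theta_d r^d$, this yields a uniform doubling constant for $r \leq r_0/2$, while for $r > r_0/2$ both quantities lie between two positive constants depending on $A$, giving the doubling property in full.

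The bulk of the proof is (G). My plan is to establish the stronger claim that there exists $\delta_1 > 0$ depending only on $A$ such that $A \setminus B(x,r)$ is connected (a single component) for every $x \in A$ and $r \in (0,\delta_1)$; then (G) holds vacuously with any $K_0 > 1$. The key convex-geometric observation is that, since $A$ is an intersection of closed half-spaces, the spherical slice $A \cap \partial B(x, \rho)$ is an intersection of spherical caps in $\partial B(x,\rho)$, hence geodesically convex, hence connected, and it is nonempty for $\rho$ small because $x \in A$ and $A$ has nonempty interior. Given $y, z \in A \setminus B(x,r)$, convexity of $A$ gives $[y,z] \subset A$. If this segment meets $B(x,r)$, I let $q_1, q_2$ be its entry and exit points on $\partial B(x,r)$ and replace the chord $[q_1,q_2]$ by a spherical arc on $\partial B(x, \rho)$ for $\rho$ slightly larger than $r$, which lies entirely in $A$ by the above observation; a minor adjustment at the endpoints (moving slightly back along $[y,q_1]$ and $[q_2,z]$) then gives a path in $A \setminus B(x,r)$ from $y$ to $z$.

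The main obstacle is choosing $\delta_1$ uniformly in $x \in A$ so that the outward-perturbed arc really stays inside $A$. I plan to handle this by a case analysis on the face $\varphi \in \Phi^*(A)$ whose relative interior contains $x$: in a neighbourhood $U_x$ of $x$, the set $A$ coincides with $x + \cK_\varphi$, and there is an intrinsic local radius $\eta_\varphi > 0$ within which this cone picture is valid. Since $A$ has only finitely many faces, $\eta := \min_\varphi \eta_\varphi > 0$, and taking $\delta_1$ smaller than $\eta/3$ allows the arc construction to proceed uniformly in $x$. With this choice, $A \setminus B(x,r)$ is connected for all $x \in A$ and $r \in (0,\delta_1)$, so (G) holds.
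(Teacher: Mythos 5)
Your treatment of (O), (B), (T), and the doubling property is essentially the paper's: lexicographic order, convexity of $B(x,r)\cap A$, simple connectivity implying unicoherence, and the cone lower bound from Lemma \ref{l:volLB} paired with the trivial upper bound. Those parts are fine.

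The plan for (G), however, has a genuine gap: you claim that $A\setminus B(x,r)$ is always connected for $r<\delta_1$, so that (G) holds vacuously. This is false, and the paper's condition (G) allows for two components precisely because two components do occur. For a concrete counterexample take $A=[0,1]^2$, $v=(0,0)$, $x=(0.15,0.15)$, $r=0.2$. Then $v\notin B(x,r)$ (since $\|x-v\|\approx 0.21 >r$), but $B(x,r)$ crosses both coordinate axes on either side of $v$, so the small triangle-like region near $v$ is cut off from the rest of $A$; $A\setminus B(x,r)$ has exactly two components. The same example shows that the key step in your argument — that $A\cap\partial B(x,\rho)$ is connected because it is an intersection of spherical caps and ``hence geodesically convex'' — fails. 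An intersection of caps is geodesically convex only when every cap is at most a hemisphere (every bounding half-space contains the centre $x$); when $x$ is near a face, the corresponding half-space cuts close to $x$ and the cap covers more than a hemisphere, and the intersection of several such caps can be disconnected. In the example above, parametrising the circle $\partial B(x,r)$ one finds that $A\cap\partial B(x,r)$ consists of two disjoint arcs, one long and one short near the excised corner. What the paper actually proves in Proposition \ref{p:geo} is the weaker and correct statement: $A\setminus B(x,r)$ has at most two components, and when there are two, the smaller has diameter $O(r)$, with the constant controlled via the quantity $K(A)$ built from the face geometry. Your argument would need to be replaced by something along those lines — for instance, the paper's argument showing every vertex of $A$ can ``see'' (via a straight segment avoiding $B$) all points of the large component, combined with the bound on how close a single vertex can be to $B$ without $A\setminus B$ being connected.
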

\begin{proof}
	First we verify the doubling property.  
	 By the proof of Lemma \ref{l:volLB}, there exists $b >0$
	 such that $\inf_{x \in A,r \in (0,b]}r^{-d} \Vol(B(x,r) \cap A)
	 >0$. Since $\Vol(B(x,2r) \cap A) $ is at most
	 $ 2^d\theta_d r^d$ for $r \leq b$, and is at most $\Vol(A)$ for
	 all $r$, the doubling property follows.

%	 we have
%Observe first that taking $\mu_*= \lambda_d|_A $ gives us a doubling measure.
%Indeed, by Lemma \ref{l:poly_volume}-(iii) there exists $r_0 >0$ such that
%for all $x \in A$ and  $r \in (0,r_0)$ we have
%$\mu_*(B(x,r)) \geq (\min_{\ph \in \Phi(A)} \rho_\ph ) r^d$, while
%$\mu_*(B(x,2r)) \leq 2^d \theta_d r^d$.
%Moreover $\mu_*(B(x,r))$ and $\mu_*(B(x,2r))$ are bounded away from 0 and
%$\infty$, uniformly over $x \in A$ and $ r \geq r_0$.
%This establishes the doubling property.  Moreover, conditions
%	(O), (B), (T) and (G) are satisfied by Lemma \ref{OTG}.

Points of $A$ can be ordered by using the lexicographic ordering inherited from $\R^d$, thus (O). Since $A$ is convex, for all $x \in A$ and $r >0$
	the set $B(x,r) \cap A$ is convex and hence connected, implying (B).
	All convex polytopes are simply connected, and 
	therefore unicoherent \cite[Lemma 9.1]{RGG}, hence (T). 
Condition (G) follows immediately from Proposition \ref{p:geo},
which we prove below.
\end{proof}

\begin{prop}\label{p:geo}
Let $A$ be a convex finite polytope in $\R^d$.  Let $N(\cdot)$ denote the
number of components of a set.   There exists $\delta_1>0$ such that for any
$x \in A$ any $r \in (0,\delta_1)$, we have  $N(A\setminus B(x,r)) \le 2$.
Moreover,  in the case that $N(A\setminus B(x,r))=2$, the diameter of the
smaller component is at most $c r$, where $c$ is a constant depending
	only on $A$.  
\end{prop}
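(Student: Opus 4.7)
My plan is to prove Proposition \ref{p:geo} by a contradiction and blow-up argument, leveraging the fact that $A$ has only finitely many faces and hence only finitely many local tangent cones.

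Suppose the conclusion fails. Then there exist sequences $x_n \in A$ and $r_n \downarrow 0$ such that for every $n$ either $N(A \setminus B(x_n,r_n)) \ge 3$, or $N=2$ and the smaller component $U_n$ of $A \setminus B(x_n,r_n)$ satisfies $\diam(U_n)/r_n \to \infty$. By compactness of $A$, a subsequence has $x_n \to x^* \in A$. Let $\varphi^*$ be the smallest face containing $x^*$; by finiteness of the face lattice of $A$, a further subsequence places every $x_n$ in the relative interior of a single face $\psi$ with $\psi \supseteq \varphi^*$. Setting $\tilde A_n := r_n^{-1}(A - x_n)$, the bad property transfers to $\tilde A_n \setminus \overline{B(0,1)}$.

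Next I would study the local Hausdorff limit of $\tilde A_n$ on every ball of fixed radius. Writing $\alpha_n := |x_n - x^*|/r_n$, and passing to a further subsequence so that $\alpha_n$ either stays bounded (with $(x_n - x^*)/|x_n - x^*|$ converging to a unit direction) or tends to $\infty$, the polytopes $\tilde A_n$ converge locally Hausdorff to a convex polyhedral set $\tilde A_\infty$ of one of two types: either a translate $p + \cK_{\varphi^*}$ of the tangent cone at $x^*$ by a bounded vector $p \in \R^d$ (if $\alpha_n$ is bounded), or the tangent cone $\cK_\psi$ based at $0$ (if $\alpha_n \to \infty$). In either case, $\tilde A_\infty$ is a (possibly translated) convex polyhedral cone whose apex lies within bounded distance from the origin. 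The key geometric claim is then that $\tilde A_\infty \setminus \overline{B(0,1)}$ has at most two connected components, and that any bounded component of it has diameter bounded by a constant $c_0 = c_0(A)$ depending only on the finite list of possible limit cones (and their uniformly bounded translations).

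Granting this claim, I would derive the contradictions in both cases. In the three-component case, the rescaled components of $\tilde A_n \setminus \overline{B(0,1)}$ would force $\ge 3$ distinct components of $\tilde A_\infty \setminus \overline{B(0,1)}$, a geometric impossibility. In the diameter case, $\diam(r_n^{-1}(U_n - x_n)) \to \infty$ would mean that the rescaled smaller component must become unbounded in the limit; but then the rescaled larger component is also unbounded, and the limiting set would have to possess two unbounded components \emph{together with} a bounded component of diameter exceeding $c_0$, again contradicting the key geometric claim. The main obstacle will be the rigorous control of the blow-up limit: the rescaled polytopes are unbounded and the combinatorial structure of $A$ near $x_n$ may vary with $n$, so components can in principle escape to infinity or merge in the limit. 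I would address this by using the finite-face argument above to stabilize subsequences, by localizing on balls of fixed radius so that Hausdorff convergence applies cleanly, and by a careful case analysis of polyhedral-cone-minus-ball geometry to establish the uniform diameter bound $c_0$.
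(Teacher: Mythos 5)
Your blow-up strategy is a genuinely different route from the paper, which gives a direct elementary argument: it first shows that every $y \in A \setminus B(x,r)$ sees some vertex $v$ of $A$ along a segment $[y,v]$ avoiding $B(x,r)$; then that $A\setminus B(x,r)$ is connected unless some vertex $v$ lies within distance $O(r)$ of $B(x,r)$ (using the finite quantity $K(A)$ from \eqref{e:Kpp} to control how close edges can be); and finally that for small $r$ at most one vertex can be that close, and the component containing it has diameter at most $(3K(A)+2)r$. The paper's approach yields an explicit constant and needs no compactness.

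However, your proposal as written has a genuine gap, and I do not think it can be closed without essentially redoing the paper's argument at the level of the limit cone. The core problem is that the component structure of $\tilde A_n \setminus \overline{B(0,1)}$ does not pass to the local Hausdorff limit. Connectedness (and a fortiori the number of components) is neither upper nor lower semicontinuous under Hausdorff convergence: two components of $\tilde A_n \setminus \overline{B(0,1)}$ can merge in the limit when the polyhedral ``wall'' separating them recedes to infinity, and bounded components can vanish. So ``$N(\tilde A_n\setminus \overline{B(0,1)})\ge 3$ for all $n$'' does not force $N(\tilde A_\infty\setminus\overline{B(0,1)})\ge 3$; you would need to show quantitatively that the separating geometry persists at a fixed scale, which is not addressed. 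The three-component contradiction therefore does not go through as stated, even granting the key geometric claim about cones.

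The diameter case has a second, more specific error. From $\diam(r_n^{-1}(U_n-x_n))\to\infty$ you correctly infer that both rescaled components become unbounded, but you then assert the limit would have ``two unbounded components together with a bounded component of diameter exceeding $c_0$.'' No such bounded component is produced by the argument; nothing forces a third component. And even if both rescaled components did survive as unbounded components of $\tilde A_\infty\setminus\overline{B(0,1)}$, your key geometric claim (at most two components, bounded ones of diameter $\le c_0$) is not violated by two unbounded components, so no contradiction follows. To make this branch work you would need a stronger structural statement for cones — e.g.\ that $(p+\mathcal K)\setminus \overline{B(0,1)}$ has at most one unbounded component — and also a mechanism by which unboundedness of $U_n/r_n$ forces the limit to retain a disconnection, which again runs into the Hausdorff-limit instability. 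Finally, note that the key geometric claim itself is essentially the scale-invariant version of the proposition you are trying to prove, restricted to cones; proving it requires exactly the kind of convexity-and-vertex argument the paper uses directly on the polytope, so the blow-up reduction does not simplify the heart of the matter.
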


\begin{proof}[\it Proof of Proposition \ref{p:geo}]
	Write $B$ for $B(x,r)$.
	Our first observation
	is that if $y \in A \setminus B$, then  
	there is at least one vertex $v \in \Phi_0(A)$ such that the line
	segment $[y,v]$ is contained in $A \setminus B$. Indeed, if this failed
	then for each $v \in \Phi_0(A)$ there would exist a point 
	$u(v) \in [y,v] \cap B$. But then since $A$ is convex, $y $ would lie in
	the convex hull of $\{v:v \in \Phi_0(A) \}$, and therefore also in
	the convex hull of $\{u(v):v \in\Phi_0(A) \}$. 
	Indeed, there exist $\alpha_v\ge 0$ with $\sum_{v\in\Phi_0(A)} \alpha_v=1$ such that $y=\sum_{v\in\Phi_0(A)} \alpha_v v$, and there exists $\beta_v\in [0,1]$ such that $u(v)= \beta_v y + (1-\beta_v) v$. Substituting $v$ by $u(v)$ and rearranging terms shows that $y= \sum_{v} \alpha'_v u(v)$ with some nonnegative $\alpha'_v$ and $\sum_v \alpha'_v=1$, thus the claim.  
	But then since $B$ is convex we
	would have $y \in B$, a contradiction. 
	
	We refer to the one-dimensional faces $\ph \in \Phi_1(A)$
	as {\em edges} of $A$.  Our second observation is that if
	the number of edges of $A$ that intersect 
	$B$ is at most 1, then $A \setminus B$  
	is connected. Indeed, in this case, for any distinct
	$v,v' \in \Phi_0(A)$ there is a path along edges
	of $A$ from $v$ to $v'$ that avoids $B$. For example,
	if $v,v'$ lie in the same two-dimensional face $\ph$ of $A$
	then  since $B$ intersects at most one edge of the polygon
	$\ph$, there is a path from $v$ to $v'$ along the edges
	of $\ph$ avoiding $B$. Therefore all $v \in \Phi_0(A)$ lie
	in the same component of $A \setminus B$, so using the first
	observation  we deduce that $A \setminus B$ is connected.

	Recall the definition of $K(A)$ at (\ref{e:Kdef}).
	Our third observation is that if
	%	there is a finite constant $K^*$
	%	such that
	%for $r$ sufficiently small, 
	$\dist(v,B) \geq 3r K(A)$ for all $v \in \Phi_0(A)$
	then $A \setminus B$ is connected.
	Indeed, suppose $\dist (v,B) \geq 3rK(A)$ for all $v \in \Phi_0(A)$.
	Suppose
	$\ph,\ph'$ are distinct edges
	of $A$ with $B \cap \ph \neq \emptyset$, and pick
	$y \in B \cap \ph$. Then $\dist (y,\partial \ph) \geq  3r K(A)$
	so that by (\ref{e:Kpp}), $\dist(y,\ph') \geq 3 r K(A)/K(\ph,\ph') 
	\geq 3r$. Hence
	by the triangle inequality $\dist(B,\ph') \geq 3r -2r = r$,
	so that $B \cap \ph' = \emptyset$. Hence $B$ intersects at
	most one edge of $A$, and by our second observation $A \setminus B$
	is connected.
	
	Suppose $\dist(v,B) \leq 3rK(A)$ for some $v \in \Phi_0(A)$. 
	Provided $r$ is small enough, this cannot happen for more than
	one $v \in \Phi_0(A)$. If $u,u' \in \Phi_0(A) \setminus \{v\}$,
	then $v \notin [u,u']$ so $\dist(v,[u,u']) > 0$.
	Therefore provided $r$ is small enough, $[u,u'] \subset
	A \setminus B$.
	%then there is a path in $A \setminus B$ from $u$ to $u'$ by a
	%similar argument to our second observation. 
	%{\bf  [can we justify previous sentence?]}
	Thus provided $r$ is small enough, all vertices
	$u \in \Phi_0(A) \setminus \{v\}$ lie in the same component
	of $A \setminus B$. If also $v$ lies in this component, then
	(by our first observation) $A \setminus B$ is connected.
	
	Thus $A \setminus B$ is disconnected only if $v$ lies in
	a different component of $A \setminus B$ than all the other
	vertices. In that case, for $y \in A \setminus B$, if
	$[y,v] \subset A \setminus B$ then $y$ is in the same component
	as $v$; otherwise (by our first observation) $y$
	lies in the same component as all of the other vertices,
	and thus $A \setminus B$ has exactly two components.
	
	If $A \setminus B$ has two components, and $y \in A \setminus B$
	with $\|y-v\| > (3K(A)+2)r$, then we claim
	$[y,v] \cap B \neq \emptyset$.
	Indeed, for each $u \in \Phi_0(A) \setminus \{v\}$ the ray
	from $v $ in the direction of $u$ passes through $B$. But then
	by an argument based on the convexity of both $A$ and $B$,
	the ray from $v$ in the direction of $y$ must also pass through
	$B$. Since $\dist(v,B) \leq 3rK(A)$ and $\diam(B) = 2r$,
	this ray must pass through $B$ at a distance at most
	$(3K(A)+2)r$ from $v$, i.e. before it reaches $y$, and the claim
	follows. Therefore $y$ lies in the component of $A \setminus B$
	that does not contain $v$, and  thus the component containing $v$
	has diameter at most $(3K(A)+2)r$.
\end{proof}

To apply Proposition \ref{p:up}, we  need to define a partition of $A$ for each small $r>0$, then estimate the corresponding covering numbers and
$\mu$-measures in \eqref{0923}. 

Taking into account a variety of  boundary effects near $\partial A$,  one 
should consider separately regions near different faces of $A$. It is however
not trivial to construct this partition in such a way that we can obtain tight 
$\mu$-measure estimates in \eqref{0923}. 
The matter is complicated by the fact that the set $G$ in \eqref{0923} that intersects a region near $\varphi$ is potentially close to a lower dimensional face lying inside $\partial \varphi$.  
We can avoid the boundary complications by constructing inductively from regions near  to the highest dimensional face to the lowest, with increasing
'thickness'.  
The partition made of $T(\ph,r)$'s defined below and the left-over interior
region is defined for this purpose.

Let $(K_j)_{j\in \N}$ be an increasing sequence with $K_1=1$, and with
$K_{j+1}> (2K(A) +1) K_j$ for each $j \in \N$.  For instance,  we could
%take $K_{j}= j(2K+1)^{j-1}$.  
take $K_{j}= (2K(A)+2)^{j-1}$.

%Now for each $r>0$, $j \in [d]$ and $\ph \in \Phi_{d-j}(A)$,  define the set
Now for each $r>0$ and $\ph \in \Phi(A)$,  define the set
$$
%T(\ph,r) : = \ph_{r K_j } \setminus \cup_{\ph' \in \Phi(A)
T(\ph,r) : = \ph_{r K_{d- D(\ph)} } \setminus \cup_{\ph' \in \Phi(A)
%\setminus \{\ph\
: \ph' \subsetneq \ph} (\ph')_{r K_{d-D(\ph')}}, 
$$
where the $T$ stands for `territory'.
Also define 
$T(A,r):= A \setminus \cup_{\varphi \in \Phi(A)} \ph_{r K_{d-D(\ph)}}$
For each $\ph \in \Phi^*(A)$,
we have $T(\ph,r)\ne\emptyset$ for all $r$ sufficiently small.  Hence,  there exists $r_0>0$ such that for all $\ph$ and all $r<r_0$,   $T(\ph,r)\ne\emptyset$.   Moreover,
territories of distinct faces are disjoint, as we show in the following lemma.

\begin{lemm}\label{l:geo1} There exists $r_0 >0$ such that  for all $r \in
	(0,r_0)$,  and any distinct $\ph,\ph'\in\Phi^*(A)$,
	it holds that $T(\ph,r)\cap T(\ph',r)=\emptyset$.  
	Moreover, if $\ph, \ph'\in\Phi(A)$ with
	$\ph\setminus \ph'\neq \emptyset$, and $y\in T(\ph,r)$,
	then  $B(y,r)$ does not intersect $\ph'$.
\end{lemm}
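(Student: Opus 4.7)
My plan is to prove a strengthened version of the moreover clause (Part~2), namely that $\dist(y, \varphi') > r K_{d-D(\varphi)}$ under its hypotheses; this implies Part~2 as stated because $K_{d-D(\varphi)} \geq K_1 = 1$, and it is exactly the bound needed to derive the disjointness claim Part~1.

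For the strengthened Part~2 the main case is $D(\varphi) \geq 1$, which I handle first. Given $y \in T(\varphi,r)$, the definition of $T(\varphi,r)$ gives $\dist(y,\varphi) \leq r K_{d-D(\varphi)}$, and for every proper sub-face $\psi$ of $\varphi$ one has $D(\psi) \leq D(\varphi)-1$, hence $\dist(y,\psi) > r K_{d-D(\psi)} \geq r K_{d-D(\varphi)+1} > (2K(A)+1)\, r K_{d-D(\varphi)}$ by the growth assumption on the $K_j$; taking the minimum over $\psi$ bounds $\dist(y,\partial\varphi)$ below by the same quantity. Letting $x$ be a closest point of $\varphi$ to $y$, so $\|y-x\| \leq r K_{d-D(\varphi)}$, the triangle inequality $\dist(x,\partial\varphi) \geq \dist(y,\partial\varphi) - \|y-x\|$ forces $\dist(x,\partial\varphi) > 2K(A)\, r K_{d-D(\varphi)} > 0$, so in particular $x \in \varphi^o$. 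For any facet $\tilde\varphi' \in \Phi_{d-1}(A)$ with $\varphi \not\subset \tilde\varphi'$, the cited bound $K(\varphi, \tilde\varphi') \leq K(A)$ converts this into $\dist(x,\tilde\varphi') > 2 r K_{d-D(\varphi)}$, and subtracting $\|y-x\|$ once more yields $\dist(y,\tilde\varphi') > r K_{d-D(\varphi)}$. To reach an arbitrary $\varphi' \in \Phi(A)$ with $\varphi \setminus \varphi' \neq \emptyset$, I use the standard fact that in a convex polytope every face equals the intersection of the facets containing it; this guarantees at least one facet $\tilde\varphi' \supset \varphi'$ not containing $\varphi$ (otherwise $\varphi \subset \varphi'$), and monotonicity of distance under inclusion gives $\dist(y,\varphi') \geq \dist(y, \tilde\varphi') > r K_{d-D(\varphi)}$. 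The vertex case $D(\varphi)=0$ reduces to $T(\{v\},r) = B(v, r K_d) \cap A$ together with the observation that, for $r$ below a threshold depending only on $A$, the distance from $v$ to any face of $A$ not containing $v$ exceeds $2 r K_d$, yielding the required bound by one triangle inequality.

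For Part~1, the cases where one face equals $A$, or where both lie in $\Phi(A)$ with one a proper sub-face of the other, are immediate from the definition, because in each situation the relevant sub-face neighbourhood is explicitly subtracted in forming the territory. Otherwise, neither of $\varphi, \varphi'$ contains the other; assuming without loss of generality $D(\varphi) \leq D(\varphi')$, a point $y \in T(\varphi,r) \cap T(\varphi',r)$ would satisfy $\dist(y, \varphi') > r K_{d-D(\varphi)} \geq r K_{d-D(\varphi')}$ by the strengthened Part~2 applied with $\varphi \setminus \varphi' \neq \emptyset$, contradicting $y \in T(\varphi', r) \subset \varphi'_{r K_{d-D(\varphi')}}$. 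The principal obstacle I anticipate is the facet reduction when $\varphi'$ has codimension $\geq 2$, since the cited bound on $K(\cdot,\cdot)$ is only available when the second face is a facet; the face-lattice identity $\varphi' = \bigcap\{\tilde\varphi' \in \Phi_{d-1}(A) : \tilde\varphi' \supset \varphi'\}$ is the structural input that bridges this gap.
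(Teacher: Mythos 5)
Your proposal is correct, and it uses exactly the same quantitative ingredients as the paper's proof (the closest-point-on-$\varphi$ device, the triangle inequality, the bound $K(\varphi,\cdot)\le K(A)$, and the growth condition $K_{j+1}>(2K(A)+1)K_j$), but it organizes the two assertions in the opposite order. The paper proves the disjointness claim (Part~1) directly: supposing $x\in T(\varphi,r)\cap T(\varphi',r)$ with $D(\varphi)\le D(\varphi')$, it picks nearby $z\in\varphi$, $z'\in\varphi'$, bounds $\dist(z,\varphi')\le 2rK_j$ and $\dist(z,\partial\varphi)\ge r(K_{j+1}-K_j)$, and combines these via \eqref{e:Kpp} to contradict the growth condition; it then obtains Part~2 from Part~1 by a separate face-lattice argument, showing that any $y\in\varphi'_r$ lies in $T(\psi,r)$ for some $\psi\subset\varphi'$ of minimal dimension in $\tilde\Phi$, whence $y\notin T(\varphi,r)$. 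You instead prove a quantified Part~2, namely $\dist(y,\varphi')>rK_{d-D(\varphi)}$, by the same triangle-inequality-plus-$K(A)$ calculation, and then derive Part~1 in one line and Part~2 via $K_1=1$. One thing your write-up does more carefully than the paper's: you make the facet-reduction step explicit (choosing a facet $\tilde\varphi'\supset\varphi'$ with $\varphi\not\subset\tilde\varphi'$ via the identity that a face is the intersection of the facets containing it), whereas the paper applies the $K(A)$ bound to a general $\varphi'$ directly, leaving implicit the reduction to facets on which the definition \eqref{e:Kdef} of $K(A)$ actually rests. Both routes are correct; yours is slightly more self-contained at the cost of proving a stronger intermediate statement.
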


\begin{proof}
	We can (and do) assume without loss of generality that
	$\ph\setminus \ph'\ne\emptyset$ and $\ph'\setminus \ph\ne\emptyset$.  
	Indeed, if $\ph \subset \ph'$,
	then by construction $T(\ph',r)\cap T(\ph,r)=\emptyset$.   

	%If both $\ph$ and $\ph'$ are vertices,  
	%then $T(\ph,r)\cap T(\ph',r)=\emptyset$ for all $r$ small.  
	%So it suffices to consider the case where $D(\ph')>0$. 
	If  $\ph$ is a vertex,  
	then 
	$\dist(\ph, \ph') >0$ so that
	$T(\ph,r)\cap T(\ph',r)=\emptyset$ for all $r$ small.  
	So it suffices to consider the case where 
	$D(\ph)>0$ and 
	$D(\ph')>0$.

%Let $j, j'\in [d]$ be such that $\ph\in \Phi_{d-j}(A)$ and 
%	$\ph'\in\Phi_{d-j'}(A)$.
	Let $j := d-D(\ph)$ and $j':= d-D(\ph')$.
	We can and do assume $j'\le j\le d-1$.
	%Let $\ph_1, ..., \ph_q \in \Phi_{d-1}(A)$ be such that $\ph=\cap_{i\in[q]} \ph_i$.  Then there exists some $i\in[q]$ such that $\ph'\setminus \ph_i\ne\emptyset$, for otherwise the condition $\ph'\setminus \ph\ne\emptyset$ is not satisfied.  
	
If there exists $x\in T(\ph,r)\cap T(\ph',r)$,  then we can
find $z\in \ph, z'\in \ph'$ such that  $\|x-z\|\le r K_j$ and 
$\|x-z'\|\le r K_{j'}$.  Therefore 
%$\dist(z',\ph_i)\le
	$\dist(z,\ph')\le r(K_j + K_{j'})\le 2rK_{j} $.
	
On the other hand, since $x \in T(\ph,r)$,
$\dist(x,\partial\ph)\geq rK_{j+1}$, and so by the triangle inequality,  
	$r K_{j+1}  - rK_{j} \le \dist(z,\partial \ph)\le K(A)\dist(z,\ph')$,
	where the last inequality comes from (\ref{e:Kpp}).
	Combining the estimates leads to  $K_{j+1}\le (2K(A)+1)K_{j}$,
	which is a contradiction.  The first claim follows. 

Moving to the second claim, let $\ph, \ph' \in \Phi(A)$ with 
$\ph \setminus \ph' \neq \emptyset$. Suppose $y \in \ph'_r$.  Set
$$
\tilde{\Phi} := \{ \psi \in \Phi(A): \psi \subsetneq \ph',
y \in \psi_{K_{D-d(\psi)}} \}.
$$
If $\tilde{\Phi} = \emptyset$ then $y \in T(\ph',r)$.
Otherwise, choose $\psi \in \tilde{\Phi}$ of minimal dimension.  Then 
	$y \in T(\psi,r)$. Either way, $y \notin T(\ph,r)$ by
	the first claim. Therefore $T(\ph,r) \cap \ph'_r = \emptyset$.
	%and the second claim follows.
%	
%Moving to the second claim,
%	let $y\in T(\ph,r)$ and $\ph'\in\Phi_{d-1}(A)$
%	with $\ph\setminus \ph'\neq \emptyset$.
%Then by (\ref{e:Kpp}), $\dist(y,\partial \ph) \leq K(\ph,\ph')\dist(y,\ph')$,
%and hence
%$$
%	\dist(y,\ph') \geq K^{-1} \dist(y ,\partial \ph)
%	\geq r K_{d-D(\ph)+1}/K > r,
%$$
%	so that  $B(y,r) \cap \ph' = \emptyset$.
%
%	we notice that  if the opposite holds, then there is $z'\in\ph'$ such that $\|x-z'\|\le r\le  K_{j'} r$ for any $j'$.   Hence repeating  the  argument gives the second claim as well.  
\end{proof}

As a last ingredient for applying Proposition \ref{p:up}, for each $J>1$ and $r\in(0,1)$, we construct a partition of $A$ and show  \eqref{0923}  for all $G$ with diameter at most $Jr$. The coefficients $a_j$ depend on the location of $G$ in relation to faces of $A$.   

 %We also here give a lower bound on the volume of balls intersected
 %with $A$, which will help to show our $\mu_*$ is a doubling 
 %measure. We write $\lambda_d$ for $d$-dimensional Lebesgue measure
 %(volume).

\begin{lemm}
	\label{l:poly_volume}
	Let $J\in \N$ and $\epsilon>0$. Then
	%For each small $r>0$, we have
	%the partition made of $T(\ph,2Jr), \ph\in\Phi(A)$ and
	%$\Theta_r = A\setminus (\cup_\ph T(\ph, 2Jr))$ satisfies
	the following hold: 
	%{\bf [MP: I don't know why there is a factor of $2J$ in the preceding
	%sentence but not in the conditions (i) and (ii) below]}
	\begin{itemize}
		\item[(i)]  
			For each $\ph \in \Phi(A)$ we have
			 $\kappa( T(\ph,2Jr), r)=O(r^{-D(\varphi)})$
			 as $r \downarrow 0$. Moreover we have
			%$\kappa( \Theta_r,r)=O(r^{-d})$ 
			$\kappa( A \setminus \cup_{\ph \in \Phi(A)}
			T(\ph,2Jr),r)=O(r^{-d})$ 
			as $r \downarrow 0$.
		\item[(ii)] For all small $r>0$ and
			any $G \subset A$ with $\diam(G)\le J r$, if it intersects $T(\ph,2Jr)$ for some $\varphi\in \Phi^*(A)$, then
		\begin{align}
			\label{0925a}
	\mu(G_r\setminus G)\ge (1-\epsilon) f_\varphi \rho_\varphi r^d.
		\end{align}
		%otherwise,  
		%\begin{align}
		%	\label{0925b}
		%	\mu(G_r\setminus G)\ge (1-\epsilon) f_0 \theta_d r^d. 
		%\end{align}
%\item[(iii)]
%For all small $r >0$ and  all $x \in A$,  we have
	%$\lambda_d(B(x,r) \cap A) \geq (\min_{\ph \in \Phi(A)} \rho_\ph) r^d$.
	\end{itemize}
\end{lemm}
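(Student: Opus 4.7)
For part (i), observe that $T(\ph,2Jr)\subset \ph_{2JrK_{d-D(\ph)}}$, a tubular neighbourhood of thickness $O(r)$ about the bounded $D(\ph)$-dimensional polytope $\ph$. This tube has $d$-dimensional Lebesgue volume $O(r^{d-D(\ph)})$, so a standard packing argument (maximal $r$-separated subset of the tube, then pack disjoint balls of radius $r/2$) yields $\kappa(T(\ph,2Jr),r)=O(r^{-D(\ph)})$. Since $A\setminus\cup_{\ph\in\Phi(A)}T(\ph,2Jr)$ is a bounded subset of $\R^d$, the same argument gives $\kappa(\cdot,r)=O(r^{-d})$.

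For part (ii), fix $\ph\in\Phi^*(A)$, a closed $G\subset A$ with $\diam(G)\le Jr$, and $y_0\in G\cap T(\ph,2Jr)$. The decisive input is Lemma \ref{l:geo1}: for $r$ small, $B(y_0,2Jr)$ is disjoint from every face $\ph'\in\Phi(A)$ with $\ph\not\subset\ph'$, so within $B(y_0,2Jr)$ the polytope $A$ is bounded only by the facets containing $\ph$. Therefore, for any $p\in G\subset B(y_0,Jr)$ we have $B(p,r)\subset B(y_0,2Jr)$, the local support cone of $A$ at $p$ contains $\cK_\ph$ (it equals $\cK_{\ph''}$ for some face $\ph''\supseteq\ph$ containing $p$, and these satisfy $\cK_{\ph''}\supseteq\cK_\ph$), and consequently $(p+\cK_\ph)\cap B(p,r)\subset A$.

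If $\ph=A$, then $\rho_A=\theta_d$ and $y_0\in T(A,2Jr)$ forces $\dist(y_0,\partial A)>2Jr$, so $G_r\subset A$. For any unit vector $v$, let $p_1,p_2\in G$ respectively minimise and maximise $p\cdot v$; the two open half-balls $\{p_1+w:w\cdot v<0,\|w\|<r\}$ and $\{p_2+w:w\cdot v>0,\|w\|<r\}$ are disjoint (their $v$-projections lie in disjoint half-lines), lie in $G_r\setminus G$ by extremality of $p_1,p_2$, and have combined Lebesgue measure $\theta_d r^d$. Since $f\ge f_0$ almost everywhere, $\mu(G_r\setminus G)\ge f_0\theta_d r^d$. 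If instead $\ph\ne A$, fix $v\in\cK_\ph^*\setminus\{0\}$ in the dual cone $\cK_\ph^*:=\{u:u\cdot w\ge 0\text{ for all }w\in\cK_\ph\}$, and let $p\in G$ maximise $p\cdot v$. The set $N:=(p+\cK_\ph)\cap B(p,r)$ has Lebesgue measure $\rho_\ph r^d$ and satisfies $N\subset A\cap G_r$ by the previous paragraph. The subset $\{z\in N:(z-p)\cdot v=0\}$ lies in a hyperplane, hence has $d$-dimensional measure zero, while every other $z\in N$ satisfies $(z-p)\cdot v>0$ and so cannot lie in $G$ by extremality of $p$. Thus $N\setminus G$ has Lebesgue measure $\rho_\ph r^d$; and since $f|_A$ is continuous at each point of $\partial A\supset\ph$ and $\ph$ is compact, uniform continuity gives $f\ge(1-\epsilon)f_\ph$ throughout $N$ for all sufficiently small $r$, so $\mu(G_r\setminus G)\ge(1-\epsilon)f_\ph\rho_\ph r^d$.

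The principal obstacle is the geometric step in the case $\ph\ne A$: identifying a single extremal point $p\in G$ whose local cone-ball $(p+\cK_\ph)\cap B(p,r)$ has Lebesgue volume \emph{exactly} $\rho_\ph r^d$, lies in $A$, and avoids $G$ simultaneously. Choosing the direction $v$ in the dual cone $\cK_\ph^*$ arranges all three conditions at once, because $v\in\cK_\ph^*$ forces $\cK_\ph\subset\{w:w\cdot v\ge 0\}$ so that extremality along $v$ excludes the full cone-ball (up to a null slice), and Lemma \ref{l:geo1} ensures the cone-ball stays inside $A$. For $\ph=A$ the dual cone collapses to $\{0\}$, forcing the two-extremal-point variant. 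Uniform continuity of $f$ on the compact set $\partial A$ finally converts the Lebesgue lower bound into the $\mu$-measure bound \eqref{0925a}.
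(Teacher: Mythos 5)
Your proof is correct, and part (ii) takes a genuinely different route from the paper. For part (i) the argument coincides with the paper's in substance (cover $\ph$ by $O(r^{-D(\ph)})$ balls, then thicken); your packing phrasing is a cosmetic variant. For part (ii) the paper establishes $D(x,r):=(x+\cK_\ph)\cap B(x,r)\subset A$ for all $x\in G$ exactly as you do (via Lemma \ref{l:geo1}, identifying $B(x_0,2Jr)\cap A$ with a translated cone), but then gets the volume lower bound $\Vol\bigl((G\oplus D(o,r))\setminus G\bigr)\ge\rho_\ph r^d$ in one stroke from the Brunn--Minkowski inequality, whereas you pick a direction $v$ in the dual cone $\cK_\ph^*$, take the $v$-extremal point $p$ of $\overline{G}$, and observe that $(p+\cK_\ph)\cap B(p,r)$ misses $G$ up to a null hyperplane slice. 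Your argument is more elementary (no isoperimetric-type input), at the cost of having to verify $\cK_\ph^*\neq\{0\}$ (true since $D(\ph)<d$ makes $\cK_\ph$ a proper cone) and to reduce to closed $G$ (harmless, since $G_r=\overline{G}_r$ and $G_r\setminus G\supset\overline{G}_r\setminus\overline{G}$); the paper's Brunn--Minkowski step handles the $\ph=A$ and $\ph\neq A$ cases uniformly, whereas you need the two-half-ball variant when $\ph=A$. One small imprecision: the sentence ``the local support cone of $A$ at $p$ contains $\cK_\ph$ \ldots and consequently $(p+\cK_\ph)\cap B(p,r)\subset A$'' skips a step, since the local cone identity only holds in an unspecified neighbourhood of $p$; the clean justification (the one the paper uses) is that for $y\in(p+\cK_\ph)\cap B(p,r)$ one writes $y-z_0=(p-z_0)+(y-p)$ with both summands in $\cK_\ph$ and $\|y-x_0\|\le 2Jr$, so $y\in(z_0+\cK_\ph)\cap B(x_0,2Jr)=A\cap B(x_0,2Jr)$.
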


\begin{proof}
Item (i) follows by the definition of $T(\ph, r)$.  Indeed, $\varphi$ is
contained in a bounded region within a $D(\varphi)$-dimensional affine space,
	and therefore can be covered by $O(r^{-D(\varphi)})$ balls of radius 
	$r$.  If we then take balls of radius 
	$r(1+ 2JK_{d-D(\varphi)})$ with
	the same centres, they will cover $T(\ph,2Jr)$,
	and one can then cover each of the larger balls with a fixed
	number of balls of radius $r$.
	
For (ii),  let $G \subset A$ with $\diam(G)\le Jr$.  Suppose first that
$G\cap T(\ph,2Jr)\ne\emptyset$ for some $\ph\in\Phi(A)$.
	Let $x_0\in G\cap T(\ph,2Jr)$.  Then 
$G_r \subset B(x_0,2Jr)$.  By Lemma \ref{l:geo1}, we see that $B(x_0, 2Jr)$ 
does not intersect any  $\ph'\in\Phi(A)$ with $\ph\setminus \ph'\ne\emptyset$. 
	  It follows that  
	  \begin{align}\label{1420}
	  	B(x_0, 2Jr)\cap A = B(x_0,2Jr)\cap (z_0 + \mathcal K_\ph )
	  \end{align}
	where $\mathcal K_\ph$ is the cone determined by $\ph$ and $z_0$ is the point of $\ph$ closest to $x_0$. 
	
	Set $D(x,r):= B(x,r)\cap (x + \mathcal K_\ph )$. 
	We claim that for any $x\in G$, we have $D(x,r)\subset A$. 
	Indeed, given  $y\in D(x,r)$, 
	we can write $y=z_0 + (x-z_0) + (y-x)=: z_0 + \theta_1 + \theta_2$.
	Here $\theta_1, \theta_2\in \mathcal K_\ph$.
	By convexity and scale invariance of $\mathcal K_\ph$,  we have
	$\theta_1+\theta_2 \in \mathcal K_\ph$
	so $y \in z_0+ \mathcal K_\ph$.
	Also $\|y-x_0\| \leq \|y-x\| + \|x-x_0\| \leq 2Jr$,
	and hence
	 $y\in A$ by \eqref{1420}, as claimed. 
	
	It follows that  (with $\oplus$ denoting Minkowski addition)
	\begin{align*}
		\mu(G_r \setminus G) \ge \mu((G\oplus D(o,r)) \setminus G)
		\ge  \Vol((G\oplus D(o,r)) \setminus G)
		\inf_{x \in G\oplus D(o,r)} f(x).
	\end{align*}
	By the Brunn-Minkowski inequality \cite[Section 5.3]{RGG}, we have
	$ \Vol(G\oplus D(o,r)) \ge \Vol(G) 
	+ \Vol(D(o,r)) = \Vol(G) + \rho_\ph r^d$.  The claim \eqref{0925a} follows by the continuity of $f$ on $\partial A$. 
	
As for the case $\varphi =A$, suppose now that $G \cap T(A,2Jr) \neq 
	\emptyset.$ Taking $x \in G \cap T(A,2Jr)$ we have
	$\dist(x,\partial A) \geq 2Jr$, and hence
	$\dist(G, \partial A) \geq 2Jr - Jr =Jr$. Therefore
	%If  $G$ does  not intersect any $T(\ph, 2Jr)$, then necessarily
	$G_r\subset A$, so
	 by the Brunn-Minkowski inequality 
	$$
	\mu(G_r \setminus G) \geq 
	%\mu ((G \oplus B(o,r)) \setminus G)
	 f_0 \Vol ((G \oplus B(o,r)) \setminus G)
	\geq f_0 \theta_d r^d.
	$$
	In this case $f_\varphi =f_0$ and $\rho_\ph = \theta_d$,
	and the claim \eqref{0925a}
	follows in this case too,
	 completing the proof of (ii). 
%
%For (iii) take $x \in A$.  If $x \in \Theta_r$ then $B(x,r) \subset A$ and
%$\lambda_d(B(x,r) \cap A) = \theta_d r^d$.  If $x \in T(\ph,2Jr)$ for some
%$\ph \in \Phi(A)$ then as above, provided $r$ is small enough 
%	$D(x,r) \subset  A$, so that $ \lambda_d(B(x,r) \cap A)
%	\geq \lambda_d(x + D(o,r)) = \rho_\ph r^d.$ This gives us (iii).
\end{proof}

\begin{proof}[\it Proof of Theorem \ref{t:M}]
	By (\ref{e:LleM}), and Theorem \ref{thmpolytope},
%As $L_{n,k}\le M_{n,k}$ for all $k,n$ with $n>k$, 
	it suffices to prove the upper bound. We shall do
	this by applying Proposition \ref{p:up} in the situation
	of Theorem \ref{t:M}.

%Observe first that taking $\mu_*= \lambda_d|_A $ gives us a doubling measure.
%Indeed, by Lemma \ref{l:poly_volume}-(iii) there exists $r_0 >0$ such that
%for all $x \in A$ and  $r \in (0,r_0)$ we have
%$\mu_*(B(x,r)) \geq (\min_{\ph \in \Phi(A)} \rho_\ph ) r^d$, while
%$\mu_*(B(x,2r)) \leq 2^d \theta_d r^d$.
%Moreover $\mu_*(B(x,r))$ and $\mu_*(B(x,2r))$ are bounded away from 0 and
%$\infty$, uniformly over $x \in A$ and $ r \geq r_0$.
	By Lemma \ref{OTG}, the restriction to $A$ of Lebesgue measure
	has the doubling property, and 
	conditions (O), (B), (T) and (G) are satisfied 

	To apply Proposition \ref{p:up}, we need to define
	(for each $K \in \N$ and each $r \in (0,r_0(K))$) a finite partition
	$\{T(j,K,r)\}$.
	For this we take the sets
	$T(\varphi,2Kr), \ph \in \Phi^*(A)$.
	%where we set $\Phi^*(A):= \Phi(A) \cup \{A\}$, and
	%we set
	%Note that $T(A,r):= A \setminus \cup_{\ph \in \Phi(A)} T(\ph,Kr)$.
	By Lemma \ref{l:geo1}, and the definition of $T(A,r)$,
	for each $K \in \N$
	there exists $r_0(K) >0$
	such that for $r \in (0,r_0(K))$
	the sets $T(\varphi,2Kr),$ $ \ph \in \Phi^*(A)$, do indeed 
	partition $A$.

	For each $\ph \in \Phi^*(A)$,
using Lemma \ref{l:poly_volume}-(i) we have 
	the condition  (\ref{1231a}) in Proposition \ref{p:up},
	where the constant denoted $b_j$ there is equal to
		$D(\varphi)$.
		%for each $\ph \in \Phi^*(A)$.
		 %and equals $d$ if $\ph = A$.
Also, using Lemma \ref{l:poly_volume}-(ii) we have the condition
	 (\ref{0923})
	in proposition \ref{p:up}, where the constant denoted $a_j$ there is 
	equal to $(1-\eps) f_\ph \rho_\ph$.
	%if $\ph \in\Phi(A)$,
	%and equals $(1-\eps) f_0 \theta_d$ if $\ph=A$.

	Suppose $\beta<\infty$. 
 By applying 
	%Lemma
	%\ref{OTG}, 
	%\ref{l:poly_volume} and
	Proposition \ref{p:up} in the manner described above
	%\ref{p:geo}, \ref{p:up},
	we see that for $\epsilon>0$, we have
\begin{align*}
\limsup_{n\to\infty} n (M_{n,k(n)})^d/\log n \le 
	%\max\Big(\frac{\hat H_\beta(1)}{(1-\epsilon)f_0 \theta_d},  
	\max_{\varphi\in\Phi^*(A)}
	\Big(
	\frac{\hat H_\beta(D(\varphi)/d)}{(1-\epsilon)f_\varphi \rho_\varphi} \Big),
\end{align*}
and the result follows.  If $\beta=\infty$, using corresponding part of Proposition \ref{p:up}  gives the result in this case too. 
\end{proof}

%\beg{acknowledgements}
%{\bf Acknowledgements.}
%I thank Alastair King and Andrew du Plessis
%for some helpful conversations during the 
%preparation of this paper. I thank  an anonymous referee,
%	and also Xiaochuan Yang, for reading through earlier 
%	versions and making some
%helpful observations.
%
%Data sharing not applicable to this article as no datasets were generated or analysed during the current study.
%%\end{acknowledgements}

% Authors must disclose all relationships or interests that 
% could have direct or potential influence or impart bias on 
% the work: 
%
% \section*{Conflict of interest}
%
% The authors declare that they have no conflict of interest.

% BibTeX users please use one of
%\bibliographystyle{spbasic}      % basic style, author-year citations
%\bibliographystyle{spmpsci}      % mathematics and physical sciences
%\bibliographystyle{spphys}       % APS-like style for physics
%\bibliography{}   % name your BibTeX data base

\begin{thebibliography}{}
%
% and use \bibitem to create references. Consult the Instructions
% for authors for reference list style.
%
%\bibitem{RefJ}
% Format for Journal Reference
%Author, Article title, Journal, Volume, page numbers (year)
% Format for books
%\bibitem{RefB}
%Author, Book title, page numbers. Publisher, place (year)
% etc

%\begin{thebibliography}{}

%\bibitem{Aaron}
%Aaron, C.,  Cholaquidis, A. and Fraiman, R.
%A generalization of the maximal-spacings in several dimensions and a 
%convexity test.  {\em Extremes} {\bf  20}, 605--634. 
%(2017). 




\bibitem{BB}
	Baccelli, F. and B\l aszczyszyn, B. (2009).
Stochastic geometry and wireless networks I: Theory.
{\em Foundations and Trends in Networking} {\bf 4}, 1--312. 


\bibitem{Bobrowski} Bobrowski, O. (2022).  Homological connectivity in 
	random \v{C}ech complexes. {\em Probab. Theory Relat. Fields}
		{\bf 183}, 715--788.


\bibitem{BK18} Bobrowski, O. and Kahle, M. (2018)
Topology of random geometric complexes: a survey.
		{\em J. Appl. Comput. Topol.} {\bf 1}, 331–364. 





%\bibitem{CSKM}
%Chiu, S.N., Stoyan, D., Kendall, W.S. and Mecke, J.
%{\em Stochastic Geometry and its Applications},
%Third edition.
%%%Wiley Series in Probability and Statistics. John
% Wiley, Chichester.
% %%xxvi+544 pp. ISBN: 978-0-470-66481-0 
%(2013).



%\bibitem{Deheuvels}
%Deheuvels, P.  Einmahl, J. H. J.,
% Mason, D. M., and Ruymgaart, F.
%The almost sure behavior of maximal and minimal multivariate $k_n$-spacings.
%{\em J. Multivariate Anal.} {\bf 24},  155--176. 
% (1988).






%\bibitem{Henze}
%Henze, N.
%On the consistency of the spacings test for multivariate uniformity,
% including on manifolds.
%{\em J. Appl. Probab.} {\bf 55}, 659--665. 
% %(2018), no. 2
%(2018).


%\bibitem{Iyer}
%Iyer, S. K., Manjunath, D. and Yogeshwaran, D.  
%Limit laws for k-coverage of paths by a Markov-Poisson-Boolean model.
%{\em Stoch. Models} {\bf  24},
%% (2008), no. 4, 
%558--582. 
%(2008). 


%\bibitem{Janson2}
%Janson, S.
%Maximal spacings in several dimensions.
%{\em Ann. Probab.} {\bf 15}, 274--280. 
%%(1987), no. 1
%(1987).




\bibitem{LP}
	Last, G. and Penrose, M. (2018).
{\em Lectures on the Poisson Process.}
Cambridge University Press, Cambridge.



\bibitem{RGG}
	Penrose, M. (2003)
{\em Random Geometric Graphs}.
Oxford University Press.

\bibitem{LNNL}
Penrose, M. D. (1999).
A strong law for the largest nearest-neighbour link between random points. 
{\em J. London Math. Soc. (2)} {\bf 60}, 951--960. 


\bibitem{pen99}Penrose, M. D. (1999) A strong law for the longest edge of the minimal spanning tree. {\em Ann. Probab.} {\bf 27},
	%(1999), no. 1,
	246--260.


\bibitem{CovPaper}
	Penrose, M.D. (2022+) Random Euclidean coverage from within.  
	arXiv:2101.06306, to appear in
	{\em Probab. Theory Relat. Fields}.

%\end{thebibliography}
\end{thebibliography}

% Non-BibTeX users please use

\end{document}